\numberwithin{equation}{section}
\theoremstyle{plain}
\newtheorem{theorem}{Theorem}[section]
\newtheorem{lemma}[theorem]{Lemma}
\newtheorem{corollary}[theorem]{Corollary}
\newtheorem{proposition}[theorem]{Proposition}
\theoremstyle{definition}
\newtheorem{definition}[theorem]{Definition}
\newtheorem{example}[theorem]{Example}
\theoremstyle{remark}
\newtheorem{remark}[theorem]{Remark}
\newtheorem*{acknowledgments}{Acknowledgments}
\newcommand\nc{\newcommand}
\nc\rnc{\renewcommand}
\nc{\GMb}{\color[rgb]{.7,0,.7}}
\nc{\KHb}{\color[rgb]{0,0,.7}}
\nc{\KHe}{\normalcolor{}}
\nc{\GMe}{\normalcolor{}}
\nc\GMnote[1]{\marginpar{\GMb \tiny #1 \GMe}}
\nc\KHnote[1]{\marginpar{\KHb \tiny #1 \KHe}}
\nc{\GMcut}[1]{\marginpar{\GMb \textbf{\footnotesize CUT}: \Tiny #1  \GMe}}
\nc{\KHcut}[1]{\marginpar{\KHb \textbf{\footnotesize CUT}: \Tiny #1  \KHe}}
\nc\KH[1]{{\KHb #1}}
\nc\GM[1]{{\GMb #1}}
\nc\blue[1]{{\color{blue}#1}}
\nc\darkblue[1]{{\textcolor[rgb]{0,0,.5}{#1}}}
\nc\id{\operatorname{id}}
\nc\ad{{\operatorname{ad}}}
\nc\Ad{{\operatorname{Ad}}}
\nc\abel{{\operatorname{ab}}}
\nc\End{\operatorname{End}}
\nc\Aut{\operatorname{Aut}}
\nc\Out{\operatorname{Out}}
\nc\IAut{\operatorname{IAut}}
\nc\opn{\operatorname}
\nc\Hom{\operatorname{Hom}}
\nc\inn{\operatorname{inn}}
\nc\ab{{\mathrm{ab}}}
\nc\Ker{\operatorname{Ker}}
\nc\Br{\mathrm{Br}}
\nc\Lie{\mathrm{Lie}}
\nc\Der{\mathrm{Der}}
\nc\rank{\opn{rank}}
\nc\Mat{\opn{Mat}}
\nc\Wh{\opn{Wh}}
\nc\proj{\mathrm{proj}}
\nc\gr{\mathrm{gr}}
\nc\grb{\gr_\bu}
\nc\plim{\varprojlim}
\nc\ilim{\varinjlim}
\nc\modN {{\mathbb N}}
\nc\modQ {{\mathbb Q}}
\nc\R{{\mathbb R}}
\nc\modZ {{\mathbb Z}}
\nc\Q{\mathbb{Q}}
\nc\Vect{\mathbf{Vect}}
\nc\Grp{\mathbf{Grp}}
\nc\NS{\mathbf{NS}}
\nc\eNs{\mathbf{eNs}}
\nc\egL{\mathbf{egL}}
\nc\modR {{\mathcal R}}
\nc\modL {{\mathcal L}}
\nc\modC {{\mathcal C}}
\nc\modE {{\mathcal E}}
\nc\modV {{\mathcal V}}
\nc\modT {\mathcal{T}}
\nc\modH {\mathcal{H}}
\nc\modM {\mathcal{M}}
\nc\modI {\mathcal{I}}
\nc\modA {\mathcal{A}}
\nc\modB {\mathcal{B}}
\nc\modD {\mathcal{D}}
\nc\bC{\bar{C}}
\nc\modG {\mathcal{G}}
\nc\tH{\tilde{\modH }}
\nc\modF {\mathcal{F}}
\nc\modP {\mathcal{P}}
\nc\modS {\mathcal{S}}
\nc\Z{\modZ }
\nc\ZF{\modZ [F]}
\nc\ZH{\modZ [H]}
\nc\g{{\mathfrak g}}
\nc\xto[1]{{\overset{#1}{\longrightarrow}}}
\nc\yto[1]{{\underset{#1}{\longrightarrow}}}
\nc\xyto[2]{{\overset{#1}{\underset{#2}{\longrightarrow}}}}
\nc\simeqto{\overset{\simeq}{\rightarrow }}
\nc\trl{\triangleleft}
\nc\trll{\,\triangleleft\,}
\nc\trr{\triangleright}
\nc\trrr{\,\triangleright\,}
\nc\ct{\overset{\cong}{\to}}
\nc\projto{\underset{\text{proj}}{\longrightarrow}}
\nc\ho{\mathbin{\hat\otimes}}
\nc\la{\langle}
\nc\ra{\rangle}
\nc\lala{\langle\!\langle}
\nc\rara{\rangle\!\rangle}
\nc\mt{\mapsto}
\nc\rt{\rtimes}
\nc\bu{\bullet} \nc\Lbu{L_\bu}
\nc\fig[1]{Figure~\ref{#1}}
\nc\FI[2]{\begin{figure}  \begin{center}\input{#1.pstex_t}\end{center} \caption{#2}  \label{#1}  \end{figure}}
\nc\FIGURE[2]{\begin{figure}  \boxed{\tt fig: #1}  \caption{#2}  \label{fig:#1}  \end{figure}}
\nc\ul{\underline}
\nc\bA{\bar{A}}		\nc\bAk{\bA_k}
\nc\bK{\bar K}		\nc\bKb{\bK_\bu}	\nc\bKbu{\bK_\bu}
\nc\bG{\bar G}		\nc\bGb{\bG_\bu}
\nc\ti{\tilde}
\nc\wh{\widehat}
\nc\wt{\widetilde}
\nc\ol{\overline}
\nc\ch{\check}
\nc\zzzvert {\;|\;}
\nc\np{\newpage}
\rnc\np{}
\nc\xym{\xymatrix}
\nc\lto{\longrightarrow}
\begin{document}

\title[Generalized Johnson homomorphisms]{{Generalized Johnson homomorphisms\\for extended N-series}}

\author{Kazuo Habiro}
\address{Research Institute for Mathematical Sciences, Kyoto University, Kyoto 606-8502, Japan}
\email{habiro@kurims.kyoto-u.ac.jp}

\author{Gw\'ena\"el Massuyeau}
\address{IRMA,  Universit\'e de Strasbourg \& CNRS,   67084 Strasbourg, France
\newline  \& \ {IMB}, Universit\'e  Bourgogne Franche-Comt\'e \& CNRS, 21000 Dijon, France}
\email{{gwenael.massuyeau@u-bourgogne.fr}}

\date{September 23, 2017 (First version: July 24, 2017)}


\maketitle

\begin{abstract}
  The Johnson filtration of the mapping class group of a compact,
  oriented surface is the
 {descending}   series consisting of the kernels of
  the actions on the nilpotent quotients of the fundamental group of
  the surface.  Each term of the Johnson filtration admits a Johnson
  homomorphism, whose kernel is the next term in the filtration.
In this paper, we consider a general situation where a group acts on a
group with a filtration called an \emph{extended N-series}.  We
develop a theory of Johnson homomorphisms in this general setting,
including many known variants of the original Johnson homomorphisms
{as well as several new variants.}
\end{abstract}

\section{Introduction}    \label{sec:introduction}

In the late seventies and eighties, Johnson studied the algebraic structure
of the mapping class group of a compact, oriented surface $\Sigma$ by examining its
{action} on the lower central series of $\pi_1(\Sigma)$ \cite{Johnson_survey}.
He introduced a filtration of the mapping class group, which is
now called the {\em Johnson filtration}, and defined homomorphisms on
the terms of this filtration, called the {\em Johnson homomorphisms}.
 His study was preceded by Andreadakis' work on the automorphism
group of a free group \cite{Andreadakis}, and {further}
developed by Morita~\cite{Morita}.  
{So far, there have been several studies on
variants of the Johnson filtrations and homomorphisms for mapping
class groups and other groups, {including the works}
\cite{AK,AN,Cooper,Kaneko,Levine1,Levine2,McNeill,MT,NT,Paris,Perron_0,Perron_p},
where the lower central series are replaced with some other descending
series.

The purpose of this paper is to generalize the Johnson filtrations and
homomorphisms to an arbitrary group acting on another group with a
descending series called an \emph{extended N-series}.  Our
constructions do not only give a generalized setting in order to view
the above-mentioned variants from a unified viewpoint, but also
provide new variants of the Johnson filtration and homomorphisms for
the mapping class group of a handlebody.}

\subsection{Extended N-series and extended graded Lie algebras}
\label{sec:n-series-extended}

An \emph{N-series} $K_+=(K_i)_{i\ge 1}$ of a group~$K$, introduced by Lazard \cite{Lazard}, is a  descending  series
\begin{gather*}
  K= K_1\ge  K_2\ge  \cdots
\end{gather*}
such that  $[K_i,K_j] \le  K_{i+j}$ for all $i,j\ge 1$.  The most
familiar example of an N-series is the lower central series
$\Gamma _+K=(\Gamma _iK)_{i\ge 1}$ defined inductively by $\Gamma _1K=K$
and $\Gamma _{i+1}K=[K,\Gamma _iK]$ for $i\ge 1$.
It  is the smallest N-series of $K$, i.e., we have
$\Gamma _iK \le  K_i$ for all $i\ge 1$ and for all N-series $(K_i)_{i\ge 1}$ of $K$.

By a \emph{graded Lie algebra} we mean a Lie algebra
$L_+=\bigoplus_{i\ge 1}L_i$ over $\modZ $ such that $[L_i,L_j]\subset L_{i+j}$ for
$i,j\ge 1$.
To every N-series $K_+$ is associated a graded Lie algebra
$$\gr_+(K_+)=\bigoplus_{i\ge 1}K_i/K_{i+1},$$ where the Lie bracket is
induced by the commutator operation.

An \emph{extended N-series}, studied  in this paper, is a natural
generalization of  N-series.  An extended N-series
$K_*=(K_i)_{i\ge 0}$ of a group $K$ is a  descending  series
\begin{gather}
  \label{e2}
  K= K_0\ge K_1\ge  K_2\ge  \cdots
\end{gather}
such that  $[K_i,K_j] \le  K_{i+j}$ for all $i,j\ge 0$.
Alternatively, a  descending  series \eqref{e2} is an extended N-series if the
positive part $K_+=(K_i)_{i\ge 1}$ is an N-series and if $K_i$ is a normal subgroup of $K$ for all $i\ge 1$.
Note that an N-series $K_+$ canonically extends to an
extended N-series by setting $K_0=K_1$.

An \emph{extended graded Lie algebra} (abbreviated as \emph{eg-Lie algebra}) $L_\bu=(L_i)_{i\ge 0}$ is a pair of
a graded Lie algebra ${L_+=\bigoplus_{i\ge 1}L_i}$ and a group $L_0$ acting on $L_+$.
To  each extended N-series $K_*$, we associate an eg-Lie
algebra $\gr_\bu(K_*)=(\gr_i(K_*))_{i\ge 0}$, consisting of
the graded Lie algebra
  $\gr_+(K_*)=\gr_+(K_+)$ associated to the N-series part $K_+$ of
  $K_*$, and the action of $\gr_0(K_*)=K_0/K_1$ on $\gr_+(K_+)$ induced
  by conjugation.

\subsection{Johnson filtrations and Johnson homomorphisms}
\label{sec:johns-filtr-mapp}

To recall Johnson's approach to mapping class groups,
assume  that $\Sigma $ is a compact, connected, oriented surface  with $\partial \Sigma \cong S^1$.
Let $K = \pi_1(\Sigma,\star)$, where $\star \in \partial \Sigma$,
and let $G$ be the mapping class group of $\Sigma$ relative to $\partial \Sigma$.
The natural action of $G$ on~$K$ gives rise to the Dehn--Nielsen representation
$$
\rho:G \longrightarrow \Aut(K).
$$

Let $K_+=\Gamma _+K$ be the lower central series of $K$.
The \emph{Johnson filtration} $G_*=(G_m)_{m\ge 0}$ of $G$ is defined by
\begin{gather*}
  G_m = \ker(\rho_m: G \lto \Aut(K/K_{m+1})),
\end{gather*}
where $\rho _m(g)(kK_{m+1})=\rho (g)(k)K_{m+1}$.
The series $G_*$ is an extended N-series.
The subgroup $G_1$ is known as the \emph{Torelli  group} of $\Sigma$,
and it is well known that  $\bigcap_{m\ge 0}G_m=\{1\}$.

For  $m\geq 1$, the \emph{{$m$th} Johnson homomorphism}
$$
\tau_m: G_m \longrightarrow \Hom (K_1/K_2, K_{m+1}/K_{m+2} ),
$$
is defined by
\begin{gather*}
  \tau _m(g)(kK_2)=g(k)k^{-1}K_{m+2}\quad \text{for $g\in G_m$, $k\in K_1$}.
\end{gather*}
Thus, $\tau _m$ measures the extent to which the action of $G_m$ on
$K/K_{m+2}$ fails to be trivial; in particular,
$\ker(\tau_m)=G_{m+1}$.  We can identify
$\Hom(K_1/K_2,K_{m+1}/K_{m+2})$ with the group $\Der_m(\gr_+ (K))$ of
degree $m$ derivations of $\gr_+(K)$, since the associated graded Lie
algebra $\gr_+(K)=\bigoplus_{m\ge 1}K_m/K_{m+1}$ is free on its degree
$1$ part~$K_1/K_2$.  Thus the $\tau_m$'s for $m\ge 1$ induce
homomorphisms
\begin{gather*}
  \bar\tau_m: G_m/G_{{m+1}} \longrightarrow \Der_m(\gr_+(K)),
\end{gather*}
forming  an injective morphism of graded Lie algebras
\begin{gather*}
  \bar\tau_+: \gr_+(G) \longrightarrow \Der_+(\gr_+(K)),
\end{gather*}
where $\Der_+(\gr_+(K))=\bigoplus_{m\ge 1}\Der_m(\gr_+(K))$ is the Lie
algebra of positive-degree derivations of $\gr_+(K)$. {This morphism of graded Lie algebras,
which contains all the Johnson homomorphisms, {was}
introduced by Morita \cite[Theorem 4.8]{Morita};
we call it the \emph{Johnson morphism}.}
From an algebraic viewpoint, it is important to determine
the image of $\bar \tau_+$, which is a Lie subalgebra of $\Der_+   (\gr_+(K)   )$.
We refer the reader to Satoh's survey \cite{Satoh_surface} for further details and references.

{We can extend} $\Der_+(\gr_+(K))$  to an eg-Lie algebra $\Der_\bu(\gr_+(K))$,
where  {the group} $\Der_0(\gr_+(K)) = \Aut(\gr_+(K))$ acts on $\Der_+(\gr_+(K))$ by conjugation.
Then the map $\bar\tau _+$ naturally extends to a morphism of eg-Lie algebras
\begin{gather}
  \label{e3}
  \bar\tau _\bu:\gr_\bu(G)\lto\Der_\bu(\gr_+(K)),
\end{gather}
whose  degree $0$ part
\begin{gather*}
  \bar\tau _0:\gr_0(G)=G_0/G_1\lto\Der_0(\gr_+(K))\simeq\Aut(H_1(\Sigma;\Z))
\end{gather*}
is given by the natural action of the mapping class group on homology.

\subsection{The Johnson morphisms associated to extended N-series actions}\label{sec:aaa}

We develop a theory of Johnson homomorphisms in the general situation
where an extended N-series $G_*=(G_m)_{m\ge0}$ of a group $G$
\emph{acts} on an extended N-series $K_*=(K_m)_{m\ge0}$ of another
group $K$.  This means that a left action
\begin{gather*}
  G\times K\longrightarrow K,\quad (g,k)\longmapsto   g(k),
\end{gather*}
of $G$ on $K$  satisfies
\begin{equation} \label{[G,K]}
  g(k)k^{-1}\in K_{i+j}  \quad \text{for all $g\in G_i$, $i\ge 0$
    and $k\in K_j$, $j\ge 0$.}
\end{equation}

We say that a group $G$  \emph{acts} on an extended N-series
$K_*$ if  $g(K_j)=K_j$ for all~$j\ge 0$.  In this case, we have
an extended N-series $\modF _*^{K_*}(G)$ of $G$ acting on $K_*$,
defined by
\begin{gather}
  \label{e6}
  \modF _i^{K_*}(G)=\{g\in G\;|\;g(k)k^{-1}\in K_{i+j}\ \text{for all $k\in K_j$, $j\ge 0$}\}.
\end{gather}
 We call $\modF _*^{K_*}(G)$ the \emph{Johnson filtration} of $G$ induced by $K_*$.

To each extended graded Lie algebra $L_\bu$, we associate the \emph{derivation eg-Lie algebra} $\Der_\bu(L_\bu)$ (see Theorem \ref{r101}).
The degree $0$ part $\Der_0(L_\bu)$ is the automorphism
group $\Aut(L_\bu)$ of $L_\bu$; the positive part $\Der_+(L_\bu)$ is
the Lie algebra of positive-degree derivations of
$L_\bu$.  Here, for
$m\ge 1$, a \emph{degree $m$ derivation} of $L_\bu$ consists of a
degree $m$ derivation $d_+$ of $L_+$ and a $1$-cocycle $d_0:L_0 \to
L_m$ satisfying certain compatibility condition (see Definition \ref{r1}).

To each action of an extended N-series $G_*$ on an extended
  N-series $K_*$, we associate
a morphism of extended graded Lie algebras
\begin{equation}   \label{j_general}
\bar \tau_\bu: \gr_\bu(G_*) \longrightarrow \Der_\bu ( \gr_\bu (K_*)),
\end{equation}
which we call the \emph{Johnson morphism},
and which generalizes \eqref{e3}.  The morphism $\bar \tau_\bu$ is
injective if and only if $G_*$ is the Johnson filtration induced by $K_*$.
(See Theorem~\ref{r87}.)

\subsection{The case of N-series}\label{sec:case-n-series}

If $K_*$ is the extension of an N-series $K_+=(K_m)_{m\ge 1}$,
{then the previous constructions specialize as follows.}
The target $\Der_\bu(\gr_\bu(K_*))=\Der_\bu(\gr_+(K_+))$ of the
Johnson morphism~\eqref{j_general} consists of the
  automorphism group $\Der_0(\gr_+(K_+))=\Aut(\gr_+(K_+))$ of the
  graded Lie algebra $\gr_+(K_+)$ and the graded Lie algebra
  $\Der_+(\gr_+(K_+))$ of positive-degree derivations of $\gr_+(K_+)$.

These simplifications recover the usual Johnson homomorphisms
\cite{Johnson_survey,Morita} and Andreadakis' constructions
\cite{Andreadakis} since, if $K_+=\Gamma _+K$ is the lower central series of a free group $K$,
then $\Der_+(\gr_+(K_+))$ is isomorphic to the Lie
algebra of ``truncated derivations''
$$
D_+ (\gr_+(K_+)) :=
\bigoplus_{m\geq 1} \Hom(K_1/K_2, K_{m+1}/K_{m+2}).
$$

We also consider the rational lower central series, and two mod-$p$ versions of the lower
  central series for a prime $p$. When $K=\pi_1(\Sigma)$ for a
  surface $\Sigma $, we recover the
``mod-$p$ Johnson homomorphisms''  introduced by {Paris \cite{Paris},}
Perron \cite{Perron_p} and Cooper~\cite{Cooper},
which are suitable for
the study of the \emph{mod-$p$ Torelli group}.
It is the subgroup of the mapping
class group consisting of elements acting trivially on $H_1(\Sigma ;\Z/p\Z)$.

{
After the first version of this manuscript {was}
released, {the authors were}
informed that Darn\'e, in his Ph.D. thesis {in preparation} \cite{Darne}, {constructed}
the same generalization of the Johnson morphism for an arbitrary $N$-series acting on another $N$-series.}

\subsection{Extended N-series associated to pairs of groups}
\label{sec:extended-n-series-2}

We introduce  two other types of extended
N-series $K_*$, each  associated with a pair $(K,N)$
of a group~$K$ and a normal subgroup $N$.

First, we associate to $(K,N)$ an extended N-series $K_*$
defined by $K_0=K$ and $K_m=\Gamma _mN$ for $m\ge1$.
An important case is where $N$ is free; this happens in particular
when $K$ is free. In this case, the positive part $\gr_+(K_+)$ of
the associated eg-Lie algebra $\gr_\bu(K_*)$ is a free
Lie algebra on its degree $1$ part $K_1/K_2=N/\Gamma _2 N$.  Unlike the classical
case where $K_0=K_1$, we have a non-trivial action of $K_0/K_1=K/N$ on
$\gr_+(K_+)$.  
This situation arises when we consider the action of
the mapping class group of a handlebody $V_g$ of genus $g$ (based with
a disc in the boundary) on  $\pi _1(V_g)$.
In fact, our study of generalized Johnson   homomorphisms for extended N-series
arises from the study of this action of the handlebody mapping class group.
We remark here that our generalized Johnson homomorphisms
{determine}
  McNeill's    ``higher order Johnson homomorphisms'' \cite{McNeill} on some subgroups of the
    surface mapping class group{, when $N$ is any characteristic subgroup of the fundamental group $K$ of a surface.}

Second, we associate to a pair $(K,N)$ with $[K,K]\le N$ the smallest
  extended N-series $K_*$ such that $K_0=K_1=K$ and $K_2=N$.
An example is the  ``weight filtration'' of  $K=\pi_1(\Sigma)$
for a punctured  surface $\Sigma $; thus, we
recover the generalizations of the Johnson homomorphisms on the  mapping class group of $\Sigma $
studied by Asada and Nakamura \cite{AN}.  In a different direction, we
obtain a new notion of Johnson homomorphisms on the ``Lagrangian'' mapping class group of a surface
studied from the point of view of finite-type invariants by
Levine, who also proposed a related notion of Johnson homomorphisms
\cite{Levine1,Levine2}.  This will be studied in the Ph.D. thesis of
Vera in connection with the ``tree reduction'' of the LMO
functor~$\widetilde{Z}$ introduced in \cite{CHM}.

\subsection{Formality of extended N-series}

We show that an action of an N-series~$G_+$ of a group $G$ on an extended N-series $K_*$ of a group $K$
has an ``infinitesimal'' counterpart if $K_*$ is formal in the following sense.

The extended N-series $K_*$ induces a
filtration on the group algebra $\Q[K]$. We say that $K_*$ is
\emph{formal} if the completion of $\Q[K]$ with respect to this
filtration is isomorphic to the degree-completion of the associated
graded of $\Q[K]$ through an isomorphism which is the identity on
the associated graded.  By generalizing Quillen's result
for the lower central series \cite{Quillen}, we show that the
associated graded of $\Q[K]$ is canonically isomorphic to the
``universal enveloping algebra'' of the eg-Lie $\Q$-algebra $\gr_\bu^\Q(K_*)$
(see  Theorem~\ref{Quillen}).
(Here $\gr_\bu^\Q(K_*)$ is given by $K_0/K_1$
in degree~$0$ and by $(K_m/K_{m+1}) \otimes \Q$ in degree $m\geq 1$.)
We can thus characterize the formality of $K_*$ in terms of
``expansions'' of   $K$, generalizing the Magnus expansions
for free groups. Then, we prove that such an expansion $\theta$
induces a filtration-preserving map
$$
\varrho^\theta: G \longrightarrow \prod_{m\geq 1} \Der_m(\gr_\bu^\Q(K_*)),
$$
which induces
$$
\bar \tau_+^\Q: \gr_+(G_*) \longrightarrow \Der_+(\gr_\bu^\Q(K_*)),
$$ the positive part of the rational version $\bar\tau_\bu^{\Q}$ of
$\bar\tau_\bu$ in \eqref{j_general}  (see Theorem \ref{infinitesimal_action}).
Thus, we may regard the map $\varrho^\theta$  as an ``infinitesimal version'' of the action
$$G_+\lto \Aut(K_*),$$
containing all the generalized Johnson homomorphisms with coefficients in $\Q$.

\subsection{Organization of the paper}\label{sec:organization-paper}

We organize the rest of the paper as follows. In Section \ref{sec:preliminaries},
 we fix some notations about groups. Sections
\ref{sec:extended-n-series} and \ref{sec:assoc-grad-lie}
deal with extended N-series and extended graded Lie algebras, respectively.
In Section~\ref{sec:derivation-eg-lie-1}, we introduce the extended graded Lie algebra consisting of the derivations of an extended graded Lie algebra.
In Section~\ref{sec:johns-homom-an}, we construct and study the
Johnson morphism induced by an extended N-series action.
In Section~\ref{sec:trunc-deriv-eg-1}, we consider truncations of the derivations of an extended graded Lie algebra.
In Section~\ref{sec:en-series-associated-1}, we specialize our constructions to N-series
and, in  Section \ref{sec:lower-central-series-1}, we illustrate these with variants of the lower central series
in order to recover several versions of the Johnson homomorphisms  in the literature.
In Section~\ref{sec:en-series-associated-2}, we consider two types of extended N-series defined by a pair of groups,
and we announce some works in progress.
Section~\ref{sec:filtr-group-rings-1} computes the associated graded
of the filtration of a group algebra induced by an extended N-series.
We consider the case of formal extended N-series in Section~\ref{sec:formality-extended-n}.

\begin{acknowledgments}
The work of K.H. is partly supported by JSPS KAKENHI Grant Number 15K04873.
\end{acknowledgments}

\section{Preliminaries in group theory}   \label{sec:preliminaries}

Here we recall a few facts about groups and fix some notations.

\subsection{Groups} \label{sec:gener-comm-calc}

Let $G$ be a group.  By $N \leq G$ we mean that $N$ is a subgroup of~$G$,
and by $N\trll G$ that $N$ is a normal subgroup of $G$.  Given
a subset $S$ of $G$, let $\langle S \rangle$ denote the subgroup of
$G$ generated by $S$, and $\lala S \rara =\lala S \rara_G$ the
normal subgroup in $G$ generated by $S$.

For $g,h\in G$, set
\begin{gather*}
  [g,h] = ghg^{-1}h^{-1},\qquad   {}^g h = ghg^{-1}, \qquad h^g = g^{-1}hg.
\end{gather*}
We will freely use the following commutator identities:
\begin{gather}
  \label{ee7} [a,bc] = [a,b]\cdot {}^b[a,c],\quad\quad
	[ab,c]={}^a[b,c]\cdot [a,c],\\
  \label{ee8} {[a,b^{-1}]^{-1} =  [a,b]^{{b}}},\quad\quad
	{[a^{-1},b]^{-1} =  [a,b]^{a}},\\
  \label{ee54} [[a,b],{}^bc]\cdot [[b,c],{}^ca]\cdot [[c,a],{}^ab]=1.
\end{gather}

We will need the well-known three subgroups lemma:

\begin{lemma}
  \label{3s}
  If $A,B,C\le G$, $N\trll G$, $[A,[B,C]]\le N$ and $[B,[C,A]]\le N$, then we
  have $[C,[A,B]]\le N$.
\end{lemma}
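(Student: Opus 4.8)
The plan is to deduce the conclusion entirely from the Hall--Witt identity \eqref{ee54}, using the normality of $N$ to absorb all the stray conjugations that appear. It is convenient to work modulo $N$ throughout (equivalently, to pass to $\bar G=G/N$, where the hypotheses become $[\bar A,[\bar B,\bar C]]=1$ and $[\bar B,[\bar C,\bar A]]=1$ and the goal becomes $[\bar C,[\bar A,\bar B]]=1$); since $N\trll G$ this is harmless and it spares us from tracking conjugates of $N$. The overall aim is to show $[[a,b],c]\in N$ for all $a\in A$, $b\in B$, $c\in C$, which by the normality of $N$ and \eqref{ee7} will upgrade to $[C,[A,B]]\le N$.

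First I would apply \eqref{ee54} to arbitrary elements $a\in A$, $b\in B$, $c\in C$, giving $[[a,b],{}^bc]\cdot[[b,c],{}^ca]\cdot[[c,a],{}^ab]=1$. The second factor $[[b,c],{}^ca]$ is, after conjugation by $c$, equal to $[a,[b,c]^c]^{-1}$; here $[b,c]^c\in[B,C]$ because $[B,C]$ is normalized by $c\in C$, so $[a,[b,c]^c]\in[A,[B,C]]\le N$, and normality of $N$ forces the second factor into $N$. Symmetrically, conjugating $[[c,a],{}^ab]$ by $a$ produces $[b,[c,a]^a]^{-1}$ with $[c,a]^a\in[C,A]$, so the second hypothesis $[B,[C,A]]\le N$ puts the third factor into $N$ as well. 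The Hall--Witt relation then yields $[[a,b],{}^bc]\in N$.

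It remains to pass from the conjugated element ${}^bc$ back to $c$, which is the main obstacle, since Hall--Witt only ever delivers $[[a,b],{}^bc]$ and ${}^bc=bcb^{-1}$ need not lie in $C$. The key observation is that conjugating $[[a,b],{}^bc]$ by $b^{-1}$ gives $[[a,b]^b,c]$, which therefore also lies in $N$; and by \eqref{ee8} we have $[a,b]^b=[b^{-1},a]$, so as $a$ and $b$ range over $A$ and $B$ these elements sweep out a generating set of $[B,A]=[A,B]$. Thus every $c\in C$ satisfies $[u,c]\in N$ for a generating set of elements $u$ of $[A,B]$, and a routine induction on word length using \eqref{ee7} (together with the inverse formula in \eqref{ee8}) and the normality of $N$ propagates this to all $w\in[A,B]$, giving $[[A,B],C]\le N$, that is, $[C,[A,B]]\le N$.
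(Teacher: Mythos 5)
Your proof is correct. Note that the paper offers no proof to compare against: it states Lemma~\ref{3s} as the ``well-known three subgroups lemma'' and leaves it unproved. Your argument is the standard Hall--Witt derivation, and it uses precisely the identities \eqref{ee7}, \eqref{ee8} and \eqref{ee54} that the paper records for free use, so it is entirely in the spirit of the text. Every step checks out under the paper's conventions ($[g,h]=ghg^{-1}h^{-1}$, ${}^gh=ghg^{-1}$, $h^g=g^{-1}hg$): conjugating the second and third Hall--Witt factors by $c$ and $a$ turns them into $[a,[b,c]^c]^{-1}$ and $[b,[c,a]^a]^{-1}$, which lie in $N$ by the hypotheses and normality of $N$; the surviving factor $[[a,b],{}^bc]$ then lies in $N$, and conjugating by $b$ yields $[[b^{-1},a],c]\in N$, with the elements $[b^{-1},a]$ generating $[A,B]=[B,A]$, so the closing induction on word length via \eqref{ee7}, \eqref{ee8} and normality of $N$ finishes the job. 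The only assertion you use without justification is that $[B,C]$ is normalized by $C$ (and $[C,A]$ by $A$); this is standard, but since you invoke it at a key point it deserves a one-line proof, e.g.\ $[b,c]^{c'}={}^{c'^{-1}}[b,c]=[b,c'^{-1}]^{-1}\,[b,c'^{-1}c]\in[B,C]$ by \eqref{ee7}.
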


\subsection{Group actions}  \label{sec:group-acti-semid}

Consider an action of a group $G$ on a group $K$:
$$
G \times K \longrightarrow K, \quad (g,k) \longmapsto g(k).
$$
Let $K \rtimes G$ denote  the semidirect product of $G$ and $K $,
which is the set $K \times G$ with multiplication
\begin{gather*}
  (k,g)\, (k',g')=(kg(k'),gg').
\end{gather*}
We naturally regard
$K $ and $G$ as subgroups of $K \rtimes G$.
Then, for $g\in G$, $k\in K $,
$$
{}^g k = gkg^{-1} = g(k)  \  \in K \le   K\rtimes G
$$
and
\begin{gather*}
  [g,k]=g k g^{-1} k^{-1}=g(k)k^{-1} \ \in K \le   K\rtimes G.
\end{gather*}
We will use these notations whenever a group $G$ acts on another group $K$.

For  $G' \leq G$ and $K' \leq K $,
let $[G',K ']$ denote the subgroup of $K $ generated by the
elements $[g',k']$ for $g'\in G'$, $k'\in K '$, and let
${}^{G'}\!K '$ denote the subgroup of $K $ generated by the
elements ${}^{g'}\!k' $ for $g'\in G'$, $k'\in K '$.
For  $g\in G$, let $[g,K']$ denote the set of elements of $K$ of the form $[g,k']$ for all $k'\in K'$.

\section{Extended N-series and the Johnson filtration}  \label{sec:extended-n-series}

In this section, we introduce the notion of extended N-series and
  the Johnson filtration for an action of a group on an extended N-series.

\subsection{N-series}  \label{sec:n-series}

An \emph{N-series} \cite{Lazard} of a group $G$ is a  descending  series
$$
G=G_1 \geq G_2\ge \cdots \geq G_i  \geq \cdots
$$
such that
\begin{gather}
  \label{e56}
  [G_m,G_n]  \le   G_{m+n}\quad  \text{for $m,n\ge 1$}.
\end{gather}
Note that $(G_i)_{i\ge 1}$ is a central series, i.e., $[G,G_i] \le  G_{i+1}$ for $i\geq 1$.
In particular, we have $G_i\trll G$ for $i\geq 1$.

As mentioned in the introduction, the lower central series of $G$
  is the smallest N-series of $G$.

\subsection{Extended N-series}  \label{sec:extended-n-series-1}

An \emph{extended N-series} $G_*=(G_m)_{m\ge 0}$ is a  descending  series
\begin{gather*}
G_0\ge G_1\ge  \cdots \ge G_k \geq \cdots
\end{gather*}
 such that
\begin{gather}
  \label{e59}
  [G_m,G_n]  \le  G_{m+n}\quad  \text{for $m,n\ge 0$}.
\end{gather}

For every extended N-series $G_*=(G_m)_{m\ge0}$, the subseries
$G_+ = (G_m)_{m\ge 1}$  is an $N$-series.  Conversely, every N-series $(G_m)_{m\geq 1}$  extends to
an extended N-series by setting $G_0=G_1$.

A \emph{morphism} $f: G_*\rightarrow K_*$ between extended N-series $G_*$ and $K_*$
is a  homomorphism $f: G_0\rightarrow K_0$ such that  $f(G_m)\subset K_m$ for all
$m\ge 0$.  Let $\eNs$ denote the category of extended N-series  and morphisms.

In the rest of this section, we adapt several usual constructions for groups to extended N-series.

\subsection{Actions on extended N-series}   \label{sec:en-series-actions}

Let $K_*$ be an extended N-series.
By an \emph{action} of an extended N-series $G_*$ on $K_*$, we mean an
action of  $G_0$ on $K_0$ such that
\begin{gather}
  \label{e60}
  [G_m,K_n]\subset K_{m+n}\quad \text{for $m,n\ge 0$}.
\end{gather}
By an \emph{action} of a group $G$ on $K_*$, we mean an action of
$G$ on $K_0$ such that
\begin{gather}
  \label{e7}
  g(K_n)=K_n\quad \text{for $g\in G$, $n\ge 0$}.
\end{gather}
Note that if $G_*$ acts on $K_*$, then $G_0$ acts on $K_*$.

\subsection{Johnson filtrations} \label{sec:en-series-induced}

If a group $G$ acts on an extended N-series $K_*$, then we have an
  extended N-series $\modF _*^{K_*}(G)$ of $G$ defined by
\begin{gather}
  \label{e11}
  \modF ^{K_*}_m(G)
  =\{g\in G\zzzvert [g,K_n]\subset K_{m+n} \text{ for $n\ge 0$}\}\quad
\end{gather}
for every $m\ge 0$, which we call the \emph{Johnson filtration} of $G$ induced by $K_*$.

\begin{proposition}
  \label{r77}
  If a group $G$ acts on an extended N-series ${K_*}$, then the Johnson
  filtration  $\modF _*^{K_*}(G)$ is the largest extended N-series of $G$ acting on $K_*$.
\end{proposition}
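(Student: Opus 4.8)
The plan is to establish two facts: first, that $\modF_*^{K_*}(G)$ is itself an extended N-series of $G$ acting on $K_*$; and second, that it contains, term by term, any other extended N-series of $G$ acting on $K_*$. The maximality statement is essentially immediate from the definitions, so the real work lies in the first assertion, and I would organize the whole argument inside the semidirect product $K \rtimes G$, where the commutators $[g,k]=g(k)k^{-1}$ and conjugates ${}^gk=g(k)$ of the action become honest commutators and conjugates.

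First I would record the easy structural properties of $\modF_*^{K_*}(G)$. Since $G$ acts on $K_*$ we have $g(K_n)=K_n$, so $[g,k]=g(k)k^{-1}\in K_n$ for every $g\in G$ and $k\in K_n$; hence $\modF_0^{K_*}(G)=G$. The descending property $\modF_m^{K_*}(G)\ge \modF_{m+1}^{K_*}(G)$ follows at once from $K_{m+1+n}\le K_{m+n}$. To see that each $\modF_m^{K_*}(G)$ is a subgroup, I would use the commutator identities \eqref{ee7} and \eqref{ee8}: for $g,h\in\modF_m^{K_*}(G)$ and $k\in K_n$, the identity $[gh,k]={}^g[h,k]\cdot[g,k]$ together with the $G$-invariance $g(K_{m+n})=K_{m+n}$ shows $[gh,k]\in K_{m+n}$, while $[g^{-1},k]^{-1}=[g,k]^g\in K_{m+n}$ (again by $G$-invariance) gives closure under inverses.

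The heart of the argument is the bracket condition $[\modF_m^{K_*}(G),\modF_{m'}^{K_*}(G)]\le\modF_{m+m'}^{K_*}(G)$; that is, for $g\in\modF_m^{K_*}(G)$ and $g'\in\modF_{m'}^{K_*}(G)$ one must show $[[g,g'],K_n]\subset K_{m+m'+n}$ for every $n\ge 0$. I would apply the three subgroups lemma (Lemma \ref{3s}) in $K\rtimes G$ with $A=\modF_m^{K_*}(G)$, $B=\modF_{m'}^{K_*}(G)$, $C=K_n$ and $N=K_{m+m'+n}$; here $N\trll K\rtimes G$ because $K_{m+m'+n}$ is normal in $K$ and $G$-invariant. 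The two hypotheses are verified by chaining the defining inclusions: from $[\modF_{m'}^{K_*}(G),K_n]\subset K_{m'+n}$ one gets $[A,[B,C]]\subset[\modF_m^{K_*}(G),K_{m'+n}]\subset K_{m+m'+n}$, and similarly, using $[K_n,\modF_m^{K_*}(G)]=[\modF_m^{K_*}(G),K_n]\subset K_{m+n}$, one gets $[B,[C,A]]\subset[\modF_{m'}^{K_*}(G),K_{m+n}]\subset K_{m+m'+n}$. The lemma then yields $[C,[A,B]]\le K_{m+m'+n}$, which is precisely the required inclusion. I expect this three subgroups lemma step — in particular the bookkeeping of degrees and the verification that $N$ is normal in the semidirect product — to be the one place demanding real care. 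Because $\modF_m^{K_*}(G)$ satisfies $[\modF_m^{K_*}(G),K_n]\subset K_{m+n}$ by its very definition, this argument simultaneously shows that $\modF_*^{K_*}(G)$ satisfies \eqref{e60}, hence acts on $K_*$.

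Finally, for maximality, let $G_*$ be any extended N-series of $G$ acting on $K_*$. By \eqref{e60} we have $[G_m,K_n]\subset K_{m+n}$ for all $n\ge 0$, so every $g\in G_m$ lies in $\modF_m^{K_*}(G)$ by the defining formula \eqref{e11}; hence $G_m\le\modF_m^{K_*}(G)$ for all $m$. Combined with the first part, this proves that $\modF_*^{K_*}(G)$ is the largest extended N-series of $G$ acting on $K_*$.
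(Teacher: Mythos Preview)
Your proof is correct and follows essentially the same approach as the paper: both work in the semidirect product $K\rtimes G$ and establish the key bracket condition $[\modF_m^{K_*}(G),\modF_{m'}^{K_*}(G)]\le\modF_{m+m'}^{K_*}(G)$ via the three subgroups lemma with $A=\modF_m^{K_*}(G)$, $B=\modF_{m'}^{K_*}(G)$, $C=K_n$, and $N=K_{m+m'+n}$. The paper is terser, absorbing the descending-series and subgroup verifications into a single ``one easily checks,'' but the substance is identical.
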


\begin{proof}
  Set $G_*=\modF _*^{K_*}(G)$.
  One easily checks that  $G_*$
  is a descending series of $G$,
  and that $[G_m,K_n]\subset K_{m+n}$ for $m,n\ge 0$.
  We have    $[G _m,G _n]\subset G _{m+n}$ for $m,n\ge 0$, since for $i\ge 0$
  \begin{gather*}
    \begin{split}
      [ [G _m,G _n],K_i]
      &\subset \lala\, [G _m,[G _n,K_i]]\cdot [G _n,[G _m,K_i]]\,
      \rara_{K_0\rt G}\quad \text{({by} Lemma \ref{3s})} \\
      &\subset \lala\, [G _m,K_{n+i}]\cdot [G _n,K_{m+i}]\, \rara_{K_0\rt G} \\
      &\subset \lala\, K_{m+n+i}\, \rara_{K_0\rt G} = K_{m+n+i}.
    \end{split}
  \end{gather*}
  Hence $G_*$ is an extended N-series acting on $K_*$.
  It is clear from the definition of~$G_*$ that, if $G'_*$ is
    another extended N-series of $G$ acting on $K_*$, then $G'_m\le G_m$.
\end{proof}

\begin{remark}
  \label{r13}
  In the proof of Proposition \ref{r77}, we {did not use}
  the condition $[K_m,K_n]\le K_{m+n}$, $m,n\ge0$.  Therefore, we can generalize
  Proposition \ref{r77} {to any normal series $K_*=(K_m)_{m\ge0}$ of a group $K$.}
\end{remark}

\subsection{Automorphism group of an extended N-series} \label{sec:autom-en-seri}

Let $K_*$ be an extended N-series.
Define the \emph{automorphism group of $K_*$ by}
\begin{eqnarray}
  \label{e90}  \Aut(K_*)
  &=&
 \{g\in \Aut(K_0)\zzzvert g(K_i) = K_i \text{ for $i\ge0$} \} ,
\end{eqnarray}
which is the largest subgroup of $\Aut(K_0)$ acting on $K_*$.  Note
that a  homomorphism $ G\rightarrow \Aut(K_*)$ is
equivalent  to an action of $G$ on $K_*$.

Let $\Aut_*(K_*)$ denote the Johnson filtration
  $\modF _*^{K_*}(\Aut(K_*))$ of $\Aut(K_*)$ induced by $K_*$;
  thus,
  \begin{equation} \label{A}
    \Aut_m(K_*)=\{g\in \Aut(K_*)\zzzvert [g,K_n]\subset K_{m+n}\text{ for $n\ge 0$}\}
  \end{equation}
  for $m\ge 0$.
Note that a morphism {$ G_*\rightarrow \Aut_*(K_*)$ of extended N-series}
is {equivalent} to an action of $G_*$ on~$K_*$.
The following lemma is easily {verified}.

\begin{lemma}
  \label{r96}
  Let $K_*$ be an extended $N$-series.
  \begin{enumerate}
  \item If $K_m$ is characteristic in $K_0$ for {all}
    $m\ge 1$, then $\Aut(K_*)=\Aut(K_0)$.
  \item If $K_m$ is characteristic in $K_1$ for {all}
    $m\ge 2$, then $\Aut(K_*)=\Aut(K_0,K_1)$,
    where $\Aut(K_0,K_1) = \{g\in \Aut(K_0)\zzzvert g(K_1)=K_1\}$.
  \end{enumerate}
\end{lemma}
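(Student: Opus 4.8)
The plan is to prove each equality by establishing its two inclusions separately, observing that in both parts the inclusion $\Aut(K_*) \subseteq (\cdots)$ is immediate from the definition \eqref{e90}: any $g\in\Aut(K_*)$ satisfies $g(K_i)=K_i$ for all $i\ge 0$, so in particular it lies in $\Aut(K_0)$, proving part (1) in that direction, and it fixes $K_1$, hence lies in $\Aut(K_0,K_1)$, proving part (2) in that direction. The actual content of the lemma is therefore the reverse inclusions, each of which I would deduce directly from the relevant characteristic hypothesis.

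For part (1), I would take an arbitrary $g\in\Aut(K_0)$ and verify the defining condition $g(K_i)=K_i$ for every $i\ge 0$. The case $i=0$ holds because $g$ is an automorphism of $K_0$, while for each $i\ge 1$ the hypothesis that $K_i$ is characteristic in $K_0$ gives $g(K_i)=K_i$ at once. Hence $g\in\Aut(K_*)$, which yields $\Aut(K_0)\subseteq\Aut(K_*)$ and completes the equality.

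For part (2), I would start from $g\in\Aut(K_0,K_1)$, so that $g\in\Aut(K_0)$ and $g(K_1)=K_1$; the conditions $g(K_0)=K_0$ and $g(K_1)=K_1$ are then in hand. The only step that requires a moment of care, and the nearest thing to an obstacle in an otherwise routine argument, is to note that since $g$ is injective and maps $K_1$ onto $K_1$, its restriction $g|_{K_1}$ is a well-defined automorphism of $K_1$. Granting this, for each $i\ge 2$ the hypothesis that $K_i$ is characteristic in $K_1$ yields $g(K_i)=g|_{K_1}(K_i)=K_i$, so $g$ satisfies all the defining conditions and lies in $\Aut(K_*)$. This gives the reverse inclusion $\Aut(K_0,K_1)\subseteq\Aut(K_*)$ and hence the desired equality. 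Both parts reduce to unwinding \eqref{e90} against the characteristic hypothesis, which is why the statement can fairly be described as easily verified.
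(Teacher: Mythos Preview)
Your proof is correct. The paper does not actually give a proof of this lemma---it simply states that it ``is easily verified''---and your argument is precisely the routine unwinding of definition~\eqref{e90} against the characteristic hypotheses that the authors are gesturing at.
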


\begin{example}
  \label{r17}
  Let $K_*$ be an extended N-series.  Then $K_*$ acts on itself via
  the conjugation $K\times K\to K$, $(k,k')\mt {}^kk'$.
  {Thus}, we have
  a morphism of extended N-series
  \begin{gather*}
    \Ad^{K_*}: K_*\lto \Aut_*(K_*),
  \end{gather*}
  called the {\em adjoint action} of $K_*$.
  In general, $K_*$ does
  not coincide with the Johnson filtration $\mathcal{F}^{K_*}_*(K_0)$
  of $K_0$ induced by its action on $K_*$.  For example, if $K_0$ is
  abelian, then $\mathcal{F}^{K_*}_*(K_0)=(K_0)_{n\ge0}$, which is
  different from $K_*$ in general.  See Remark \ref{r9} for an
  example where {we have} $K_*=\mathcal{F}^{K_*}_*(K_0)$.
\end{example}

\section{Extended graded Lie algebras}  \label{sec:assoc-grad-lie}

It is well known \cite{Lazard} that
to each N-series is associated a graded Lie algebra over~$\Z$.
Here we associate to each extended N-series an eg-Lie algebra.

\subsection{Graded Lie algebras}  \label{sec:graded-lie-algebras}

Recall that a \emph{graded Lie algebra} $L_+=(L_m)_{m\ge 1}$ consists of abelian
groups $L_m$, $m\ge 1$, and bilinear maps
\begin{gather*}
  [{\cdot,\cdot}]: L_m\times L_n\rightarrow L_{m+n}
\end{gather*}
for $m,n\ge 1$ such that
\begin{itemize}
\item $[x,x]=0$ for  $x\in L_m$, $m\ge 1$,
\item $[x,y]+[y,x]=0$ for  $x\in L_m$, $y\in L_n$, $m,n\ge 1$,
\item $[x,[y,z]]+[y,[z,x]]+[z,[x,y]]=0$ for  $x\in L_m$, $y\in L_n$,
  $z\in L_p$, $m,n,p\ge 1$.
\end{itemize}
Also, let $L_+$  denote the direct sum $\bigoplus_{m\ge 1}L_m$ by abuse of notation.

A \emph{morphism} $f_+: L_+\rightarrow L'_+$ of graded Lie algebras is a family
$f_+=(f_i)_{i\ge 1}$ of  homomorphisms $f_i: L_i\rightarrow L'_i$
such that $f_{i+j}([x,y])=[f_i(x),f_j(y)]$ for all $x\in L_i$, $y\in L_j$, $i,j\ge 1$.
An \emph{automorphism} of $L_+$ is an invertible morphism from $L_+$ to
itself.  Let $\Aut(L_+)$ denote the group of automorphisms of $L_+$.

An \emph{action} of a group $G$ on $L_+$ is a homomorphism from $G$ to
$\Aut(L_+)$.  In other words, {it} is a degree-preserving action $(g,x)\mapsto {}^g x$ of $G$
on  $L_+$ such that
\begin{gather}
  \label{e94}
	{}^g[x,y]=[{}^gx,{}^gy] \quad \hbox{{for $g\in G$ {and} $x,y\in L_+$}.}
\end{gather}

\subsection{Extended graded Lie algebras} \label{sec:extended-graded-lie}

An \emph{extended graded Lie algebra} (abbreviated as \emph{eg-Lie algebra})
$L_\bu=(L_m)_{m\ge 0}$ consists of
\begin{itemize}
\item a group $L_0$,
\item a graded Lie algebra $L_+  = (L_m)_{m\ge 1}$,
\item an action {$(g,x)\mapsto {}^gx$}
of $L_0$ on $L_+$.
\end{itemize}

A \emph{morphism} $f_\bullet=(f_m: L_m\rightarrow L'_m)_{m\ge 0}: L_\bu\rightarrow L'_\bu$ between eg-Lie algebras $L_\bu$
and $L'_\bu$ consists of
\begin{itemize}
\item a  homomorphism $f_0: L_0\rightarrow L'_0$,
\item a {graded Lie algebra morphism}
  $f_+  = (f_m)_{m\ge 1}: L_+\rightarrow L'_+$,
\end{itemize}
such that
$$
f_m({}^xy)={}^{f_0(x)} (f_m(y) )
$$
for all $x\in L_0$, $y\in L_m$, $m\ge 1$.
Let $\egL$ denote the category of eg-Lie algebras and morphisms.

\subsection{From extended N-series to eg-Lie algebras}\label{sec:from-extended-n}

For each {extended N-series}~$K_*$, {we} define the
\emph{associated eg-Lie algebra}
$\bKb = \gr_\bu(K_*)$ {as follows.} {Set}
\begin{gather*}
  \bK_m=\gr_m(K_*) =K_m/K_{m+1}
\end{gather*}
for all $m\geq 0$.
The group $\bK_0$ is not abelian in general,
whereas $\bK_m$ is abelian for $m\geq 1$.
Thus we will use multiplicative notation for the former, and the additive notation for the latter.
The Lie bracket $[\cdot,\cdot]: \bK_m\times \bK_n\rightarrow \bK_{m+n}$ in $\bKb$
 is given by
\begin{gather}
  \label{e95}
  [aK_{m+1},bK_{n+1}]=[a,b]K_{m+n+1}
\end{gather}
{for  $m,n\ge 1$, and}  the action of $\bK_0$ on $\bK_m$ is given by
\begin{gather}
  \label{e96}
  {}^{(aK_1)}(bK_{m+1})=({}^ab)K_{m+1}.
\end{gather}
Observe that $\bar{K}_+$ is the usual   graded Lie algebra associated to the N-series $K_+$ (see \cite[Theorem  2.1]{Lazard}).

There is a functor $ \grb: \eNs\rightarrow \egL$.
Indeed, every morphism $f: G_*\rightarrow K_*$ in $\eNs$
induces a morphism $ \gr_\bu(f): \grb(G_*)\rightarrow \grb(K_*)$ in $\egL$ defined by
\begin{gather}
  \label{e97}
  \grb(f)(gG_{m+1})=f(g)K_{m+1}, \quad {(g\in G_m, m\ge 0)}.
\end{gather}

\section{Derivation eg-Lie algebras of eg-Lie algebras}\label{sec:derivation-eg-lie-1}

In this section, we introduce the derivation eg-Lie algebra of an
 eg-Lie algebra, which generalizes the derivation Lie algebra of a  graded Lie algebra.

\subsection{Derivations of an eg-Lie algebra}
\label{sec:derivations-of-eg-lie}

Let $L_\bu$ be an eg-Lie algebra.

\begin{definition}
  \label{r1}
  Let $m\ge 1$.  A \emph{derivation}
  $d=(d_i)_{i\ge 0}$ of $L_\bu$ of degree $m$ is a family of maps
  $d_i: L_i\rightarrow L_{m+i}$ satisfying the following  conditions.
\begin{enumerate}
\item $d_+ = (d_i)_{i\ge 1}$ is a derivation of the graded Lie
  algebra $L_+$, i.e., the
  $d_i$ for $i\ge 1$ are homomorphisms such that
$$
  d_{i+j}([a,b])=[d_i(a),b]+[a,d_j(b)]
$$
for $a\in L_i$, $b\in L_j$, $i,j\ge 1$.
\item The map $d_0: L_0\rightarrow L_m$ is a $1$-cocycle.
  In other words, we have
$$
  d_0(ab)=d_0(a)+{}^a(d_0(b))
$$
  for $a,b\in L_0$.
\item We have
$$
    d_i({}^ab)=[d_0(a),{}^ab]+{}^a(d_i(b))
$$
 for $a\in L_0$, $b\in L_i$, $i\ge 1$.
\end{enumerate}
\end{definition}

For $m\ge 1$, let $\Der_m(L_\bu)$ be the  group of derivations of
$L_\bu$ of degree $m$.  Set $\Der_+(L_\bu) =
(\Der_m(L_\bu))_{m\ge 1}$.

\begin{theorem}
  \label{r2}
  We have a graded Lie algebra structure on $\Der_+(L_\bu)$  such that, for $m,n\ge 1$, the Lie bracket
\begin{gather*}
  [\cdot,\cdot]: \Der_m(L_\bu)\times \Der_n(L_\bu) \longrightarrow \Der_{m+n}(L_\bu)
\end{gather*}
is given by
    \begin{gather}
      \label{e117}
      [d,d']_i(a)=
      \begin{cases}
	d_n(d'_0(a))-d'_m(d_0(a))-[d_0(a),d'_0(a)]&(i=0,a\in L_0),\\
	d_{n+i}(d'_i(a))-d'_{m+i}(d_i(a))&(i\ge 1,a\in L_i).
      \end{cases}
    \end{gather}
\end{theorem}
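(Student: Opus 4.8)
The plan is to take the formula \eqref{e117} as the definition of a bracket $[\cdot,\cdot]\colon\Der_m(L_\bu)\times\Der_n(L_\bu)\to\Der_{m+n}(L_\bu)$ and verify two things: first, that $[d,d']$ again satisfies the three conditions of Definition \ref{r1}, so that it lands in $\Der_{m+n}(L_\bu)$; and second, that the resulting operation satisfies the graded Lie algebra axioms. The observation that organizes everything is that the positive part of \eqref{e117}, namely $[d,d']_i(a)=d_{n+i}(d'_i(a))-d'_{m+i}(d_i(a))$ for $i\ge 1$, is exactly the ordinary commutator $d_+\circ d'_+-d'_+\circ d_+$ of derivations of the graded Lie algebra $L_+$. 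Since the derivations of a graded Lie algebra form a graded Lie algebra under this commutator (a classical fact), condition (1) for $[d,d']$ and the positive-degree parts of antisymmetry and of the Jacobi identity are automatic, and all the genuine work concerns the degree $0$ component $[d,d']_0$ and its interaction with $[d,d']_+$.

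To show that $[d,d']$ is a derivation I would check conditions (2) and (3) directly. For (2), I would expand $[d,d']_0(ab)$ using the cocycle identities $d_0(ab)=d_0(a)+{}^a d_0(b)$ and $d'_0(ab)=d'_0(a)+{}^a d'_0(b)$, rewrite the terms $d_n({}^a d'_0(b))$ and $d'_m({}^a d_0(b))$ through the compatibility condition (3) applied to $d$ and to $d'$, and expand the Lie bracket using the $L_0$-equivariance \eqref{e94}. Collecting terms produces $[d,d']_0(a)+{}^a([d,d']_0(b))$ together with a residual combination of brackets that cancels, partly term by term and partly by antisymmetry of the bracket; hence $[d,d']_0$ is again a $1$-cocycle. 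For (3), I would expand $[d,d']_i({}^a b)$, substitute the compatibility relations $d_i({}^a b)=[d_0(a),{}^a b]+{}^a(d_i(b))$ (and its primed analogue), and use the derivation property of $d_+$ and $d'_+$. The diagonal terms of the form $[d_0(a),{}^a(d'_i(b))]$ cancel, the composition terms assemble into ${}^a([d,d']_i(b))$, and the two iterated brackets $[d'_0(a),[d_0(a),{}^a b]]$ and $[d_0(a),[d'_0(a),{}^a b]]$ collapse, by the Jacobi identity of $L_+$, into $-[[d_0(a),d'_0(a)],{}^a b]$; the latter combines with the remaining term $[d_n(d'_0(a))-d'_m(d_0(a)),{}^a b]$ to give precisely $[[d,d']_0(a),{}^a b]$, which is condition (3).

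It then remains to check the Lie algebra axioms on the degree $0$ component. Bilinearity is immediate, since each summand of \eqref{e117} is additive in $d$ and in $d'$ separately (using that the $d_i,d'_i$ with $i\ge1$ are homomorphisms and that the Lie bracket of $L_+$ is bilinear). The identity $[d,d]=0$ holds because the positive-degree terms cancel and the degree $0$ term $[d_0(a),d_0(a)]$ vanishes in $L_+$; antisymmetry $[d,d']=-[d',d]$ follows, and is visible directly at degree $0$ once one uses $[d'_0(a),d_0(a)]=-[d_0(a),d'_0(a)]$. For the Jacobi identity, the positive part is the classical Jacobi identity for the commutator of derivations of $L_+$, so only the degree $0$ summand of the cyclic sum $[[d,d'],d'']_0+[[d',d''],d]_0+[[d'',d],d']_0$ must be shown to vanish.

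I expect this degree $0$ Jacobi identity to be the main obstacle. Substituting the formula for $[d,d']_0$ into $[[d,d'],d'']_0(a)$ produces a large sum of triple compositions of the maps $d_\bu,d'_\bu,d''_\bu$, of terms of the form $d''_\bu([\,\cdot\,,\,\cdot\,])$ which must be opened up through the derivation property, and of nested brackets $[d_0(a),[d'_0(a),d''_0(a)]]$. After taking the cyclic sum, the composition terms cancel exactly as in the positive-degree case, the mixed terms cancel using the compatibility condition (3) and the cocycle property, and the purely bracket-theoretic terms vanish by the Jacobi identity of $L_+$. No idea beyond \eqref{e94}, Definition \ref{r1}, and the Jacobi identity of $L_+$ is required; the only real difficulty is bookkeeping of signs and of where each defining property of a derivation is invoked.
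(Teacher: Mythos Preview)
Your proposal is correct and follows essentially the same approach as the paper: first verify that $[d,d']$ satisfies conditions (1)--(3) of Definition~\ref{r1} (with (1) being the classical commutator-of-derivations fact, and (2)--(3) checked by direct expansion), then verify the Lie algebra axioms, with the only nontrivial case being the degree~$0$ part of the Jacobi identity. One small remark: in your sketch of the degree~$0$ Jacobi identity you list ``the compatibility condition (3) and the cocycle property'' among the tools needed, but in fact neither is required there---once you expand $[d,[d',d'']]_0(a)$ the only ingredient beyond bookkeeping is the derivation property of $d_+$ applied to $d([d'_0(a),d''_0(a)])$, after which the cyclic sum vanishes by the Jacobi identity in $L_+$ and straightforward cancellation.
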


We call $\Der_+(L_\bu)$ the \emph{derivation graded Lie algebra} of $L_\bu$.

\begin{proof}[Proof of Theorem \ref{r2}]
For simplicity of notation,  set $D_+=\Der_+(L_\bu)$.

For $d\in D_m$, $d'\in D_n$, $m,n \geq 1$,
define $[d,d']=([d,d']_i: L_i \to L_{i+m+n})_{i\geq 0}$ by~\eqref{e117}.
We prove $[d,d'] \in D_{m+n}$ as follows.

First, {$[d,d']_+=([d,d']_i)_{i\geq 1}$ is a derivation of
  $L_+$} since the commutator of two derivations of a Lie
  algebra is a derivation.

Second, we verify that $[d,d']_0:L_0 \to L_{m+n}$ is a $1$-cocycle.  For  $a,b\in L_0$,
  \begin{gather*}
    \begin{split}
      & \ \quad [d,d'](ab)\\
      &=dd'(ab)-d'd(ab)-[d(ab),d'(ab)]\\
      &=d\big(d'(a)+{}^a(d'(b))\big)-d'\big(d(a)+{}^a(d(b))\big) -\big[d(a)+{}^a(d(b)),d'(a)+{}^a(d'(b))\big]\\
      &=dd'(a)+\big[d(a),{}^a(d'(b))\big]+{}^a(dd'(b))   -d'd(a)-\big[d'(a),{}^a(d(b))\big]-{}^a(d'd(b))\\
      &\quad -[d(a),d'(a)]- \big[d(a),{}^a(d'(b))\big]- \big[{}^a(d(b)),d'(a)\big]- \big[{}^a(d(b)),{}^a(d'(b))\big]\\
      &=dd'(a)+{}^a(dd'(b))-d'd(a) - {}^a(d'd(b))
      -[d(a),d'(a)]-{}^a[d(b),d'(b)]\\
      &=[d,d'](a)+{}^a\big([d,d'](b)\big).
    \end{split}
  \end{gather*}

  Third, for $a\in L_0$, $b\in L_i$, $i\ge 1$, we {have}
  \begin{gather*}
    \begin{split}
      [d,d']({}^ab)
      &=dd'({}^ab)-d'd({}^ab)\\
      &=      d\big([d'(a),{}^ab]+{}^a(d'(b))\big)
      -d'\big([d(a),{}^ab]+{}^a(d(b))\big)\\
      &=
      [dd'(a),{}^ab]+[d'(a),d({}^ab)]+[d(a),{}^a(d'(b))]+{}^a(dd'(b))\\
    &\quad -[d'd(a),{}^ab]-[d(a),d'({}^ab)]-[d'(a),{}^a(d(b))]-{}^a(d'd(b))\\
      &=
      [dd'(a),{}^ab]+\big[d'(a),[d(a),{}^ab]+{}^a(d(b))\big]+[d(a),{}^a(d'(b))]+{}^a(dd'(b))\\
      &\quad-[d'd(a),{}^ab]-\big[d(a),[d'(a),{}^ab]+{}^a(d'(b))\big]-[d'(a),{}^a(d(b))]-{}^a(d'd(b))\\ 
      &=
      [dd'(a),{}^ab]+[d'(a),[d(a),{}^ab]]+{}^a(dd'(b))\\
      &\quad-[d'd(a),{}^ab]-[d(a),[d'(a),{}^ab]]-{}^a(d'd(b))\\
      &=\big[dd'(a)-d'd(a)-[d(a),d'(a)],{}^ab\big]+{}^a\big(dd'(b)-d'd(b)\big)\\
    &=[ [d,d'](a),{}^ab]+{}^a\big([d,d'](b)\big).
    \end{split}
  \end{gather*}
  Therefore,  $[d,d']$ is a derivation of  $L_\bu$ of degree $m+n$.

  Now we show that the {maps} $[\cdot, \cdot]: D_m\times
  D_n \to D_{m+n}$ for $m,n\ge 1$ define a graded Lie algebra
  structure on $D_+$.  {Clearly, we have $[d,d]=0$
  and $[d,d']+[d',d]=0$ for $d,d'\in D_+$.} Thus it remains to check the Jacobi identity
  \begin{equation} \label{evaluated_Jacobi}
  [d,[d',d'']](a)+ [d'',[d,d']](a) + [d',[d'',d]](a) =0
  \end{equation}
  for $d,d',d''\in D_+$ and $a\in L_i$ with $i\geq 0$.
  For $i\geq1$, this is the standard fact that derivations of a Lie
  algebra {form}   a Lie algebra.  For $i=0$,  we have
  \begin{gather*}
    \begin{split}
      [d,[d',d'']](a)
      &=d[d',d''](a)-[d',d'']d(a)-\big[d(a),[d',d''](a)\big]\\
      &=d\big(d'd''(a)-d''d'(a)-[d'(a),d''(a)]\big)-\big(d'd''d(a)-d''d'd(a)\big)\\
      &\quad -\big[d(a),d'd''(a)-d''d'(a)-[d'(a),d''(a)]\big]\\
      &=dd'd''(a)-dd''d'(a)-[dd'(a),d''(a)]-[d'(a),dd''(a)]\\
      &\quad -d'd''d(a)+d''d'd(a)\\
      &\quad -[d(a),d'd''(a)]+[d(a),d''d'(a)] + [d(a),[d'(a),d''(a)]],
    \end{split}
  \end{gather*}
  from which {\eqref{evaluated_Jacobi} follows}.
\end{proof}

\subsection{Derivation eg-Lie algebras}  \label{sec:derivation-eg-lie}

Let $L_\bu$ be an eg-Lie algebra.

\begin{theorem}
  \label{r101}
  The derivation graded Lie algebra $\Der_+(L_\bu)$ extends to an
  eg-Lie algebra $\Der_\bu(L_\bu)$ by setting
  $\Der_0(L_\bu)   =\Aut(L_\bu)$ and by defining an action
\begin{gather}
  \label{e5}
  \Der_0(L_\bu)\times \Der_m(L_\bu) \longrightarrow \Der_m(L_\bu),\quad (f,d) \longmapsto {}^fd,
\end{gather}
for $m\ge 1$ by
\begin{gather}
  \label{e116}
  ({}^fd)_i(a)=f_{m+i}d_if_i^{-1}(a) \qquad {(i\ge 0, a\in L_i).}
\end{gather}
\end{theorem}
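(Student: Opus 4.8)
The plan is to unpack the definition of an eg-Lie algebra from Section~\ref{sec:extended-graded-lie} and check its three pieces of data in turn. Two of them are already in hand: $\Der_0(L_\bu)=\Aut(L_\bu)$ is a group under composition, and $\Der_+(L_\bu)$ is a graded Lie algebra by Theorem~\ref{r2}. So the entire content is to prove that \eqref{e116} really defines an action of $\Aut(L_\bu)$ on $\Der_+(L_\bu)$, i.e.\ a homomorphism $\Aut(L_\bu)\to\Aut(\Der_+(L_\bu))$. This breaks into three verifications: (a) for $f\in\Aut(L_\bu)$ and $d\in\Der_m(L_\bu)$ the family ${}^fd=(({}^fd)_i)_{i\ge0}$ again lies in $\Der_m(L_\bu)$; (b) ${}^{\id}d=d$ and ${}^f({}^gd)={}^{fg}d$; (c) the action preserves the Lie bracket, i.e.\ \eqref{e94} holds, namely ${}^f[d,d']=[{}^fd,{}^fd']$. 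The one fact I would record first and then use throughout is that an automorphism $f=(f_i)_{i\ge0}$ of $L_\bu$ has every component $f_i$ invertible and that $f^{-1}=(f_i^{-1})_{i\ge0}$ is again an eg-Lie automorphism; in particular $f_+^{-1}$ is a graded Lie morphism and $f_i^{-1}({}^ab)={}^{f_0^{-1}(a)}(f_i^{-1}(b))$.

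For (a) I would verify the three conditions of Definition~\ref{r1} for ${}^fd$. Condition~(1) is the classical statement that conjugating a derivation of the graded Lie algebra $L_+$ by the automorphism $f_+$ yields a derivation; concretely, feeding $[a,b]$ through $f_{i+j}^{-1}$, then $d_{i+j}$, then $f_{m+i+j}$, and using that $f_+$ and $f_+^{-1}$ are Lie morphisms, splits it into $[({}^fd)_i(a),b]+[a,({}^fd)_j(b)]$. Condition~(2), that $({}^fd)_0=f_m d_0 f_0^{-1}$ is a $1$-cocycle, uses that $f_0^{-1}$ is a homomorphism and $d_0$ a cocycle, and then pushes $f_m$ through the twisted term via the compatibility $f_m({}^xy)={}^{f_0(x)}(f_m(y))$. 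The delicate case, and the one I expect to be the main obstacle, is condition~(3), which couples the degree-$0$ cocycle part to the positive part: I would rewrite $f_i^{-1}({}^ab)={}^{f_0^{-1}(a)}(f_i^{-1}(b))$ using $f^{-1}$, apply condition~(3) for $d$ itself, and then transport $f_{m+i}$ back across the resulting commutator and twisted term, using once more that $f_+$ is a Lie morphism and the action-compatibility of $f$. Everything is engineered so that the leftover factors $f_0 f_0^{-1}$ and $f_j f_j^{-1}$ cancel and one is left with exactly $[({}^fd)_0(a),{}^ab]+{}^a(({}^fd)_i(b))$.

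Part~(b) is immediate from the componentwise composition law $(fg)_i=f_ig_i$ for eg-Lie morphisms, which telescopes \eqref{e116} into ${}^f({}^gd)={}^{fg}d$, while ${}^{\id}d=d$ is clear. For part~(c) I would evaluate both sides of ${}^f[d,d']=[{}^fd,{}^fd']$ componentwise against the bracket formula \eqref{e117}. In degrees $i\ge1$ the verification is a clean telescoping: the conjugating factors $f_{n+i}^{-1}f_{n+i}$ and $f_{m+i}^{-1}f_{m+i}$ cancel, matching $f_{m+n+i}\,[d,d']_i\, f_i^{-1}$ on the nose. The genuinely nonlinear point is the degree-$0$ component, because \eqref{e117} there carries the extra term $[d_0(a),d'_0(a)]$; here I would apply that $f_+$ is a graded Lie morphism to turn $[({}^fd)_0(a),({}^fd')_0(a)]$ into $f_{m+n}[d_0 f_0^{-1}(a),d'_0 f_0^{-1}(a)]$, so that it too matches $({}^f[d,d'])_0(a)$. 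Collecting these, \eqref{e116} is an action of $\Aut(L_\bu)$ on $\Der_+(L_\bu)$ by graded Lie algebra automorphisms, and hence $\Der_\bu(L_\bu)$ is an eg-Lie algebra.
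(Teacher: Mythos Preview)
Your proposal is correct and follows essentially the same route as the paper's proof: first check that ${}^fd$ satisfies the three clauses of Definition~\ref{r1} (derivation on $L_+$, $1$-cocycle on $L_0$, and the mixed compatibility), then note that $(f,d)\mapsto{}^fd$ is a group action on the graded abelian group $\Der_+(L_\bu)$, and finally verify ${}^f[d,d']=[{}^fd,{}^fd']$ componentwise via \eqref{e117}, treating the degree-$0$ case separately because of the extra bracket term. The paper's computations are written out in full whereas you sketch them, but the structure and the key observations (in particular using that $f^{-1}$ is again an eg-Lie automorphism for condition~(3), and that $f_+$ is a Lie morphism to handle the degree-$0$ bracket term in part~(c)) are the same.
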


We call $\Der_\bu(L_\bu)=(\Der_m(L_\bu))_{m\ge 0}$ the \emph{derivation eg-Lie algebra} of $L_\bu$.

\begin{proof}
 {For simplicity of notation,  set $D_\bu=\Der_\bu(L_\bu)$.}
 {For $f\in D_0$, $d\in D_m$, $m\geq 1$, define
  ${}^fd=     (({}^fd)_i: L_i \to L_{i+m}     )_{i\geq 0}$
  by~\eqref{e116}.}  We prove  ${}^fd \in D_m$ as follows.

  First, we check {that} $({}^fd)_+$ is a derivation of  $L_+$.
  For  $a\in L_i$, $b\in L_j$, $i,j\ge 1$,
  \begin{gather*}
    \begin{split}
      ({}^fd)([a,b])
      &=fdf^{-1}([a,b])\\
      &=fd([f^{-1}(a),f^{-1}(b)])\\
      &=f\big([df^{-1}(a),f^{-1}(b)]+[f^{-1}(a),df^{-1}(b)]\big)\\
      &=[fdf^{-1}(a),b]+[a,fdf^{-1}(b)]\\
      &=\big[({}^fd)(a),b\big]+\big[a,({}^fd)(b)\big].
    \end{split}
  \end{gather*}

  Second, we check that $({}^fd)_0:L_0 \to L_m$ is a  $1$-cocycle. For $a,b\in L_0$,
  \begin{gather*}
    \begin{split}
      ({}^fd)(ab)
      &=fdf^{-1}(ab)\\
      &=fd\big(f^{-1}(a)f^{-1}(b)\big)\\
      &=f\Big(df^{-1}(a)+{}^{f^{-1}(a)}\big(df^{-1}(b)\big)\Big)\\
      &=fdf^{-1}(a)+{}^a(fdf^{-1}(b)) \, = \, ({}^fd)(a)+{}^a\big(({}^fd)(b)\big).
    \end{split}
    \end{gather*}

  Third, we {have}
  for $a\in L_0$, $b\in L_i$, $i\ge 1$,
  \begin{gather*}
    \begin{split}
      ({}^fd)({}^ab)
      &=fdf^{-1}({}^ab)\\
      &=fd({}^{f^{-1}(a)}(f^{-1}(b)))\\
      &=f\Bigl(\big[df^{-1}(a),{}^{f^{-1}(a)}(f^{-1}(b))\big]+{}^{f^{-1}(a)}\big(df^{-1}(b)\big)\Bigr)\\
      &=\big[fdf^{-1}(a),{}^ab\big]+{}^a\big(fdf^{-1}(b)\big)\\
      &=\big[({}^fd)(a),{}^ab\big]+{}^a\big(({}^fd)(b)\big).
    \end{split}
  \end{gather*}
  {Therefore,}   ${}^fd$ is a derivation of the eg-Lie algebra $L_\bu$  of degree $m$.

It is easy to check that the maps $D_0 \times D_m \to D_m$,
  $(f,d)\mt{}^fd$ for $m\ge 1$ form an action of $D_0$ on the graded abelian group $D_+$.
  Let us verify that this action preserves the Lie bracket of $D_+$.
  Let $g\in D_0$, $d\in D_m$, $d'\in D_n$ with $m,n\ge 1$, and let $a\in L_i$ with  $i\ge 0$. {For $i\geq1$,} we have
  \begin{gather*}
    \begin{split}
      ({}^g[d,d'])(a)
      &=g[d,d']g^{-1}(a)\\
      &=g(dd'-d'd)g^{-1}(a) =(gdg^{-1}gd'g^{-1}-gd'g^{-1}gdg^{-1})(a)=[{}^gd,{}^gd'](a)
    \end{split}
  \end{gather*}
  {and, for $i=0$, we have  \begin{gather*}
      \begin{split}
	({}^g[d,d'])(a)
	&=g[d,d']g^{-1}(a)\\
	&=gdd'g^{-1}(a) -gd'dg^{-1}(a) - g[dg^{-1}(a),d'g^{-1}(a)]\\
	&=gdg^{-1}gd'g^{-1}(a) -gd'g^{-1}gdg^{-1}(a) - [gdg^{-1}(a),gd'g^{-1}(a)]\\
	& =[{}^gd,{}^gd'](a).
      \end{split}
  \end{gather*}}
  {Hence}  $D_\bu$ is an eg-Lie algebra.
\end{proof}

\begin{example} 
  \label{r19}
Let $L_\bu$ be an eg-Lie algebra.  There is a morphism of eg-Lie
algebras
\begin{gather}
  \label{e12}
  \ad=\ad^{L_\bu}: L_\bu \lto \Der_\bu(L_\bu),
\end{gather}
called the {\em adjoint action} of $L_\bu$. {It is} defined by
\begin{gather*}
  \ad(a)(b) =
  \begin{cases}
 {}^ab & \text{for $a\in L_0$, $b\in L_n$, $n\ge0$},\\
 [a,b] & \text{for $a\in L_m$, $m\ge1$, $b\in L_n$, $n\ge0$},\\
  \end{cases}
\end{gather*}
where we set $[a,b]=a-{}^ba$ for $a\in L_m$, $m\ge1$ and $b\in L_0$.
The proof is straightforward and left to the reader.
\end{example}

\section{The Johnson homomorphisms of an extended N-series action}\label{sec:johns-homom-an}

In this section, we {generalize} Johnson homomorphisms {for an arbitrary action of extended  N-series $G_*$ on $K_*$.}
{These ``Johnson homomorphisms'' {form} a ``Johnson morphism''}
\begin{gather*}
  \bar{\tau}_\bu: \gr_\bu(G_*)\longrightarrow \Der_\bu(\gr_\bu(K_*))
\end{gather*}
{with values in the derivation eg-Lie algebra of $\gr_\bu(K_*)$.}

\subsection{Generalized Johnson homomorphisms}  \label{sec:gener-johns-homom-3}

In this subsection, we consider an extended N-series $G_*$ acting on an extended N-series $K_*$,
and we {set} $\bKb=\grb(K_*)$.
For {every} $m\geq 0$, we {will} define a homomorphism
$$
\tau _m  = \tau _m^{G_*,K_*}: G_m\longrightarrow \Der_m(\bKb),
$$
which we call the \emph{{$m$th} (generalized) Johnson homomorphism.}
{We treat the cases $m=0$ and $m>0$ separately.}

\begin{proposition}
  \label{r3}
There is a homomorphism $$\tau _0 : G_0 \longrightarrow
\Aut(\bKb)$$
which maps {each} $g\in G_0$ to  $\tau _0(g)=(\tau _0(g)_i:
\bK_i\rightarrow \bK_i)_{i\ge 0}$  defined by
        \begin{gather}
          \label{e62}
          \tau _0(g)_i\, (aK_{i+1})=  ({}^ga) K_{i+1}.
        \end{gather}
\end{proposition}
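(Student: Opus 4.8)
The plan is to show that formula \eqref{e62} defines an element $\tau_0(g) \in \Aut(\bKb)$ for each $g \in G_0$, and then that the assignment $g \mapsto \tau_0(g)$ is a group homomorphism into $\Aut(\bKb)$. First I would verify that each $\tau_0(g)_i$ is well-defined: since $G_0$ acts on $K_*$ we have $g(K_{i+1}) = K_{i+1}$, so if $a K_{i+1} = a' K_{i+1}$ then $a^{-1}a' \in K_{i+1}$ and hence $({}^g a)^{-1}({}^g a') = {}^g(a^{-1}a') \in K_{i+1}$, giving $({}^g a)K_{i+1} = ({}^g a')K_{i+1}$. The same identity $g(K_i) = K_i$ shows that $\tau_0(g)_i$ maps $\bK_i$ into itself, and that it is bijective with inverse $\tau_0(g^{-1})_i$.

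Next I would check that $\tau_0(g) = (\tau_0(g)_i)_{i \ge 0}$ is a morphism of the eg-Lie algebra $\bKb$, i.e.\ an element of $\Aut(\bKb)$ as defined in Theorem~\ref{r101} (namely $\Der_0(\bKb) = \Aut(\bKb)$). This has three parts. For the degree-$0$ component, $\tau_0(g)_0$ is a group automorphism of $\bK_0 = K_0/K_1$ because conjugation ${}^g(-)$ is an automorphism of $K_0$ preserving $K_1$. For the bracket on $\bK_+$, using \eqref{e95} I would compute
\begin{gather*}
  \tau_0(g)_{m+n}\big([aK_{m+1}, bK_{n+1}]\big)
  = \tau_0(g)_{m+n}\big([a,b]K_{m+n+1}\big)
  = {}^g[a,b]\, K_{m+n+1},
\end{gather*}
and since ${}^g[a,b] = [{}^ga, {}^gb]$ in $K_0$, this equals $[{}^ga\,K_{m+1}, {}^gb\,K_{n+1}] = [\tau_0(g)_m(aK_{m+1}), \tau_0(g)_n(bK_{n+1})]$. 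For compatibility with the $\bK_0$-action defined in \eqref{e96}, I would check $\tau_0(g)_m({}^{(aK_1)}(bK_{m+1})) = {}^{\tau_0(g)_0(aK_1)}(\tau_0(g)_m(bK_{m+1}))$; unwinding both sides via \eqref{e96}, the left side is ${}^g({}^ab)\,K_{m+1}$ and the right side is ${}^{({}^ga)}({}^gb)\,K_{m+1}$, and these agree because ${}^g({}^ab) = {}^{(gag^{-1})}({}^gb) = {}^{({}^ga)}({}^gb)$ in $K_0$.

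Finally I would verify the homomorphism property $\tau_0(gh) = \tau_0(g)\tau_0(h)$, which follows degree by degree from $\tau_0(gh)_i(aK_{i+1}) = {}^{gh}a\,K_{i+1} = {}^g({}^h a)\,K_{i+1} = \tau_0(g)_i(\tau_0(h)_i(aK_{i+1}))$, together with $\tau_0(1) = \id$. I do not expect any serious obstacle here: the entire argument is a sequence of routine checks that conjugation by a fixed $g \in G_0$ respects all the structure of $\bKb$, all of which descend from the corresponding identities in $K_0$ since $g$ preserves every $K_i$. The only point requiring mild care is bookkeeping the well-definedness of each $\tau_0(g)_i$ modulo $K_{i+1}$, which is where the hypothesis that $G_0$ \emph{acts} on $K_*$ (so that $g(K_j) = K_j$ for all $j$) is used.
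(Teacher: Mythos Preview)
Your proposal is correct and follows essentially the same approach as the paper: both verify directly that \eqref{e62} yields a well-defined endomorphism of $\bKb$ (checking the group/Lie bracket and the $\bK_0$-equivariance via the identities ${}^g[a,b]=[{}^ga,{}^gb]$ and ${}^g({}^ab)={}^{({}^ga)}({}^gb)$), and then that $g\mapsto\tau_0(g)$ is multiplicative. The only cosmetic difference is that the paper first shows $\tau_0$ is a monoid homomorphism into $\End(\bKb)$ and deduces invertibility from that, whereas you check bijectivity directly via $\tau_0(g^{-1})$; both are equally valid.
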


\begin{proof}
  {Let $\End(\bKb)$ denote} the monoid of endomorphisms of the eg-Lie algebra $\bKb$.
  Let $g\in G_0$. We prove that $\tau _0(g)\in \End(\bKb)$ is well  defined as follows.
{It is easy to see that $\tau _0(g)_i: \bK_i\rightarrow \bK_i$ is a
  well-defined homomorphism for $i\ge 0$.}

  Next, $(\tau _0(g)_i)_{i\ge 1}: \bar K_+ \to \bar K_+$ is a graded Lie algebra automorphism since,
  for $a\in K_i$, $b\in K_j$, $i,j\ge 1$, we have
  \begin{eqnarray*}
    \tau _0(g)([aK_{i+1},bK_{j+1}])
    &=&\tau _0(g)([a,b]K_{i+j+1})
    =({}^g[a,b])K_{i+j+1}
    =[{}^ga,{}^gb]K_{i+j+1}\\
    &=&\big[({}^ga)K_{i+1},({}^gb)K_{j+1}\big]
    =[\tau _0(g)(aK_{i+1}),\tau _0(g)(bK_{j+1})].
  \end{eqnarray*}

{We now check} the equivariance property:
  \begin{eqnarray*}
    \tau _0(g)\big({}^{(aK_1)}(bK_{i+1})\big)
    &=&\tau _0(g)\big(({}^ab)K_{i+1}\big)
    =({}^g({}^ab))K_{i+1}
    =\big({}^{({}^ga)}({}^gb)\big)K_{i+1}\\
    &=&{}^{({}^ga)K_1}\big(({}^gb)K_{i+1}\big)
    ={}^{\tau _0(g)(aK_1)}\big(\tau _0(g)(bK_{i+1})\big)
  \end{eqnarray*}
  for $a\in K_0$, $b\in K_i$, $i\geq 1$.
  Thus, we have $\tau _0(g)\in \End(\bKb)$.

  {The} map $\tau _0: G_0\rightarrow \End(\bKb)$ is a {monoid}
  homomorphism, i.e.,  {we have} $\tau _0(gg')=\tau _0(g)\tau _0(g')$ for   $g,g'\in G_0$.
  Indeed, for $a\in K_i$, $i\ge 0$, we have
  \begin{eqnarray*}
      \tau _0(gg')(aK_{i+1})&=&\big({}^{(gg')}a\big)K_{i+1}
      =\big({}^g({}^{g'}a)\big)K_{i+1}
      =\tau _0(g)\big(({}^{g'}a)K_{i+1}\big)\\
      &=&\tau _0(g)\big(\tau _0(g')(aK_{i+1})\big)
      =(\tau _0(g)\tau _0(g'))(aK_{i+1}).
  \end{eqnarray*}
  {Hence $\tau_0$ takes values in $\Aut(\bKb)$.}
\end{proof}

\begin{proposition}
  \label{r4}
For $m\geq 1$, there is a  homomorphism $${\tau _m : G_m \longrightarrow \Der_m(\bKb)}$$ which maps
{each} $g\in G_m$ to  $\tau _m(g)=(\tau _m(g)_i: \bK_i\rightarrow \bK_{m+i})_{i\ge 0}$ defined~by
        \begin{gather}
          \label{e63}
          \tau _m(g)_i\, (aK_{i+1})=[g,a]K_{m+i+1}.
        \end{gather}
\end{proposition}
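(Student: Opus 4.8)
The plan is to show, for each fixed $g\in G_m$, that $\tau_m(g)=(\tau_m(g)_i)_{i\ge 0}$ is a well-defined degree-$m$ derivation of $\bKb$ in the sense of Definition~\ref{r1}, and then that $g\mapsto\tau_m(g)$ is a homomorphism into the \emph{additive} group $\Der_m(\bKb)$. Throughout I would compute inside the semidirect product $K_0\rtimes G_0$, where $[g,a]=g(a)a^{-1}$, and lean on two bookkeeping facts: the filtration bounds $[G_r,K_s]\subset K_{r+s}$ from \eqref{e60} together with the extended N-series relations $[K_r,K_s]\subset K_{r+s}$, and the normality $K_j\trll K_0$ for $j\ge1$. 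The crucial point, which makes everything collapse in the associated graded, is that since $m\ge1$ every correction term introduced by a conjugation or an extra commutator lands in filtration degree strictly above the target, hence dies in $\bKb$.

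First I would dispose of well-definedness and the two routine conditions. If $a'=ak$ with $k\in K_{i+1}$, then $[g,ak]=[g,a]\cdot{}^a[g,k]$ by \eqref{ee7}, and ${}^a[g,k]\in K_{m+i+1}$ because $[g,k]\in[G_m,K_{i+1}]\subset K_{m+i+1}$ and $K_{m+i+1}$ is normal; thus each $\tau_m(g)_i$ is well defined, and for $i\ge1$ the same identity shows it is additive, since ${}^a[g,b]$ and $[g,b]$ differ by $[a,[g,b]]\in[K_i,K_{m+i}]\subset K_{m+2i}\subset K_{m+i+1}$. The $1$-cocycle condition~(2) is then immediate and in fact exact: $[g,ab]=[g,a]\cdot{}^a[g,b]$ becomes, after reducing modulo $K_{m+1}$ and using the action \eqref{e96}, precisely $\tau_m(g)_0(a)+{}^a\big(\tau_m(g)_0(b)\big)$.

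The substantive step is the Leibniz condition~(1), and I expect the degree-chase there to be the main obstacle. I would write ${}^ga=[g,a]\cdot a$, use that $g$ acts by a group automorphism so ${}^g[a,b]=[{}^ga,{}^gb]$, and hence $[g,[a,b]]=[{}^ga,{}^gb]\cdot[a,b]^{-1}$. Expanding $[[g,a]\,a,\,[g,b]\,b]$ by repeated use of \eqref{ee7}, and setting $\alpha=[g,a]\in K_{m+i}$, $\beta=[g,b]\in K_{m+j}$, produces several terms; the commutator $[\alpha,\beta]\in K_{2m+i+j}$, the term ${}^{\alpha\beta}[a,b]$ cancelling the trailing $[a,b]^{-1}$ up to $[\alpha\beta,[a,b]]\in K_{m+\min(i,j)+i+j}$, and all conjugation discrepancies all lie in $K_{m+i+j+1}$ (using $\min(i,j)\ge1$ and $m\ge1$), leaving exactly $[a,\beta]\cdot[\alpha,b]=[a,[g,b]]\cdot[[g,a],b]$ modulo $K_{m+i+j+1}$, i.e. the derivation rule in $\bKb$. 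Condition~(3) is a shorter instance of the same calculation: expanding $[g,{}^ab]={}^{({}^ga)}({}^gb)\cdot({}^ab)^{-1}$ (with ${}^ga=[g,a]\cdot a$, $[g,a]\in K_m$) and discarding the terms in $K_{m+i+1}$ leaves ${}^a[g,b]\cdot[[g,a],{}^ab]$, which is the required compatibility identity once one reads it in $\bKb$.

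Finally, for the homomorphism property I would invoke $[gh,a]={}^g[h,a]\cdot[g,a]$ from \eqref{ee7}. Since $[g,[h,a]]\in[G_m,K_{m+i}]\subset K_{2m+i}\subset K_{m+i+1}$, the factor ${}^g[h,a]$ is congruent to $[h,a]$, so $[gh,a]\equiv[h,a]\cdot[g,a]$ modulo $K_{m+i+1}$. In the abelian group $\bK_{m+i}$ this is exactly $\tau_m(g)_i+\tau_m(h)_i$, whence $\tau_m(gh)=\tau_m(g)+\tau_m(h)$, completing the proof.
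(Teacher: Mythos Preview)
Your proof is correct and follows essentially the same route as the paper: verify well-definedness and each clause of Definition~\ref{r1} by commutator calculus in $K_0\rtimes G_0$, using \eqref{ee7} and the filtration bounds \eqref{e60} to push error terms into $K_{m+i+1}$ (resp.\ $K_{m+i+j+1}$), then check the homomorphism property via $[gh,a]={}^g[h,a]\cdot[g,a]$. The only minor differences are computational: for the Leibniz rule the paper simply asserts $[g,[a,b]]\equiv[[g,a],b]\cdot[a,[g,b]]$ modulo $K_{m+i+j+1}$ (implicitly via the Hall--Witt identity \eqref{ee54}), whereas you derive it by expanding $[{}^ga,{}^gb]$; and for condition~(3) the paper expands ${}^a[g,b]=[{}^ag,{}^ab]$ while you expand $[g,{}^ab]={}^{({}^ga)}({}^gb)\cdot({}^ab)^{-1}$, but these are equivalent manipulations leading to the same conclusion.
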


 \begin{proof}
   Let $g\in G_m$. We show that   $\tau _m(g)\in \Der_m(\bKb)$ is well
   defined as follows.

  Since $G_*$ acts on  $K_*$, we easily see that
  the map $\tau _m(g)_i: \bK_i\rightarrow \bK_{m+i}$ is well defined by \eqref{e63} for all $i\geq 0$.
  The map $\tau _m(g)_i: \bK_i\rightarrow \bK_{m+i}$ is a {$1$-cocycle}
  if $i=0$ and a homomorphism if $i\ge 1$: indeed,
  for all $a,b\in K_i$, we have
  \begin{eqnarray*}
      \tau _m(g)\big((aK_{i+1})(bK_{i+1})\big)
      &=&\tau _m(g)(abK_{i+1}) \\
      &=&[g,ab]K_{m+i+1} \\
      &=& \big([g,a]\cdot {}^a[g,b]\big) K_{m+i+1}\\
      &=&
      \begin{cases}
	\tau _m(g)(a)+{}^{(aK_1)}\big(\tau _m(g)(b)\big)&\text{if $i=0$},\\
	\tau _m(g)(a)+\tau _m(g)(b)&\text{if $i\ge 1$}.
      \end{cases}
  \end{eqnarray*}

  Next, we verify that $(\tau _m(g)_i)_{i\geq 1}$ is  a derivation of    $\bar K_+$.
  For $a\in K_i$, $b\in K_j$, $i,j\ge 1$,  we have
  \begin{eqnarray*}
      \tau _m(g)\big([aK_{i+1},bK_{j+1}]\big)
      &=&\tau _m(g)\big([a,b]K_{i+j+1}\big)\\
      &=&[g,[a,b]]K_{m+i+j+1}\\
      &=&([ [g,a],b]K_{m+i+j+1})+([a,[g,b]]K_{m+i+j+1})\\
      &=&[ [g,a]K_{m+i+1},bK_{j+1}]+[aK_{i+1},[g,b]K_{m+j+1}]\\
      &=&[\tau _m(g)(aK_{i+1}),bK_{j+1}]+[aK_{i+1},\tau _m(g)(bK_{j+1})].
  \end{eqnarray*}

It remains to check that
  \begin{gather}
    \label{e64}
    \tau _m(g)\big({}^{(aK_1)}(bK_{i+1})\big)=\big[\tau _m(g)(aK_1),{}^{(aK_1)}(bK_{i+1})\big]+{}^{(aK_1)}\big(\tau _m(g)(bK_{i+1})\big)
  \end{gather}
  for $a\in K_0$ and $b\in K_i$, $i\ge 1$.  Indeed, {since}
  \begin{eqnarray*}
      {}^a[g,b]&=& [{}^ag,{}^ab]=[ [a,g]g,{}^ab] ={}^{[a,g]}[g,{}^ab]\cdot [ [a,g],{}^ab]\\
    &\equiv& [g,{}^ab]\cdot [ [g,a]^{-1},{}^ab]
    \equiv [g,{}^ab]\cdot [ [g,a],{}^ab]^{-1} \pmod{K_{m+i+1}},
  \end{eqnarray*}
  we obtain
  \begin{gather*}
    \begin{split}
    {}^{(aK_1)}\big(\tau _m(g)(bK_{i+1})\big)
    &={}^{(aK_1)}([g,b]K_{m+i+1})\\
    &=({}^a[g,b])K_{m+i+1}\\
    &=\big([g,{}^ab]\cdot [ [g,a],{}^ab]^{-1}\big) K_{m+i+1}\\
    &=\big([g,{}^ab]K_{m+i+1}\big)-\big([ [g,a],{}^ab]K_{m+i+1}\big)\\
    &=\tau _m(g)\big(({}^ab)K_{i+1}\big)-\big[ [g,a]K_{m+1},({}^ab)K_{i+1}\big]\\
    &=\tau _m(g)\big({}^{(aK_1)}(bK_{i+1})\big)-[\tau _m(g)(aK_1),{}^{(aK_1)}(bK_{i+1})],
    \end{split}
  \end{gather*}
  {proving} \eqref{e64}.    Thus, we have  $\tau _m(g)\in \Der_m(\bKb)$.

{Finally, we} show that the map $\tau _m: G_m\rightarrow \Der_m(\bKb)$ is a  homomorphism.
Indeed, for $g,g'\in G_m$, $a\in K_i$, $i\ge 0$, we have
  \begin{gather*}
    \begin{split}
      \tau _m(gg')(aK_{i+1})
      &=[gg',a]K_{m+i+1}\\
      &= \big({}^g[g',a]\cdot [g,a]\big) K_{m+i+1}\\
      &=\big([g',a]\cdot [g,a]\big) K_{m+i+1}\\
      &   =   [g',a] K_{m+i+1}  +  [g,a] K_{m+i+1}    \\
      &=\tau _m(g')(aK_{i+1})+\tau _m(g)(aK_{i+1}) =(\tau _m(g)+\tau _m(g'))(aK_{i+1}).
    \end{split}
  \end{gather*}
\end{proof}

It is easy to prove the following.

\begin{proposition}
  \label{r85}
  For $m\ge 0$, we have
  \begin{gather}
    \label{e65}
   \ker(\tau _m)=G_m\cap \modF ^{K_*}_{m+1}(G_0)
   =    \{g\in G_m\zzzvert [g,K_i]\subset K_{m+i+1} \text{ for $i\ge0$}    \},
  \end{gather}
   {where $\modF _*^{K_*}(G_0)$ is the Johnson filtration of $G_0$ induced by $K_*$.}
\end{proposition}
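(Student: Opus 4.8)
The plan is to unwind the definition of $\tau_m$ and to check triviality one graded piece at a time. First I would record that the second equality in \eqref{e65} is essentially a matter of definition: by \eqref{e11} we have $\modF_{m+1}^{K_*}(G_0)=\{g\in G_0\zzzvert [g,K_i]\subset K_{m+i+1}\text{ for }i\ge 0\}$, and since $G_m\subseteq G_0$, intersecting with $G_m$ simply adjoins the constraint $g\in G_m$. So the whole content is to identify $\ker(\tau_m)$ with the set on the right, and for this I would treat the two cases $m\ge 1$ and $m=0$ separately, exactly paralleling Propositions~\ref{r4} and~\ref{r3}.

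For $m\ge 1$ the target $\Der_m(\bKb)$ is an abelian group, and a derivation $d=(d_i)_{i\ge 0}$ is zero if and only if each component map $d_i$ is the zero map. Hence $g\in\ker(\tau_m)$ if and only if $\tau_m(g)_i(aK_{i+1})=[g,a]K_{m+i+1}=0$ in $\bK_{m+i}$ for all $a\in K_i$ and all $i\ge 0$; this says exactly $[g,a]\in K_{m+i+1}$, i.e.\ $[g,K_i]\subset K_{m+i+1}$ for every $i\ge 0$. For $m=0$ the target is instead the group $\Aut(\bKb)$, so the kernel is the preimage of the identity automorphism, and $\tau_0(g)=\id$ if and only if $\tau_0(g)_i=\id_{\bK_i}$ for every $i\ge 0$. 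Since $\tau_0(g)_i(aK_{i+1})=({}^ga)K_{i+1}$ by \eqref{e62}, this holds if and only if $({}^ga)a^{-1}=[g,a]\in K_{i+1}$ for all $a\in K_i$ and all $i\ge 0$; as $K_{i+1}=K_{0+i+1}$, this is again the condition $[g,K_i]\subset K_{m+i+1}$ specialized to $m=0$. Combining the two cases yields the first equality in \eqref{e65}.

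I do not anticipate a genuine obstacle: the argument is a direct translation of ``$\tau_m(g)$ is trivial'' into a membership condition on commutators. The only point that warrants care is the bookkeeping distinction between $m\ge 1$, where the kernel is the preimage of $0$ in the derivation module, and $m=0$, where it is the preimage of the identity in the automorphism group; in both cases the key elementary observation is that a morphism of $\modZ$-graded objects is trivial precisely when each of its graded components is, which reduces the verification to the already-computed formulas \eqref{e62} and \eqref{e63} for $\tau_m(g)_i$.
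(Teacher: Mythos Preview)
Your proof is correct and is precisely the intended argument: the paper itself does not write out a proof, stating only that the proposition ``is easy to prove,'' and your unwinding of the definitions via \eqref{e62}, \eqref{e63}, and \eqref{e11} is exactly the routine verification the authors have in mind. The only distinction you needed to make---trivial derivation versus identity automorphism depending on whether $m\ge 1$ or $m=0$---is handled cleanly.
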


{Set ${\bar G}_m =  \gr_m(G_*)$ for {each} $m\geq 0$.}
{By Propositions \ref{r3} and \ref{r4},}
$\tau _m$ induces a  homomorphism
  \begin{gather}
    \label{e67}
    \bar\tau _m: {\bar G}_m \longrightarrow \Der_m(\bKb).
  \end{gather}
{By Proposition \ref{r85}, we have}
 \begin{gather}
    \label{e75}
    \ker(\bar\tau _m)=      (G_m\cap \modF _{m+1}^{K_*}(G_0)     ) /G_{m+1}.
  \end{gather}

\subsection{The Johnson morphism}   \label{sec:johnson-morphism}

In this subsection, we show that the family of all generalized Johnson
  homomorphisms {form}  a morphism of eg-Lie algebras,
which we call the \emph{Johnson morphism}.

\begin{theorem}
  \label{r87}
  {Let an extended N-series $G_*$ act}
  on an extended N-series $K_*$, and {set} ${\bar G}_\bu = \grb(G_*)$,  ${\bar K}_\bu = \grb(K_*)$.
  Then the family
  $\bar\tau _\bu=(\bar\tau _m)_{m\ge 0}$ of all homomorphisms $\bar\tau _m$ defined by \eqref{e67} is a morphism of eg-Lie algebras
  \begin{gather}
    \label{e4}
    \bar\tau _\bu: {\bar G}_\bu \longrightarrow \Der_\bu(\bKb).
  \end{gather}
  {Moreover}, $\bar\tau _\bu$ is injective if and only if
  {$G_*$   is the Johnson filtration $\modF _*^{K_*}(G_0)$.}
\end{theorem}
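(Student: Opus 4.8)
The plan is to establish the two assertions in turn: that $\bar\tau_\bu$ is a morphism in $\egL$, and then the injectivity criterion. For the morphism statement, Propositions~\ref{r3} and~\ref{r4} already provide that $\bar\tau_0\colon\bar G_0\to\Aut(\bKb)$ is a group homomorphism and that each $\bar\tau_m$ $(m\ge1)$ is additive, while \eqref{e75} records that each $\bar\tau_m$ is well defined on $\bar G_m=G_m/G_{m+1}$ (for $m\ge1$ because $G_{m+1}\subset\modF_{m+1}^{K_*}(G_0)$ by \eqref{e60}, and for $m=0$ because $g\in G_1$ forces $g(a)\equiv a \bmod K_{i+1}$). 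Comparing with the definition of a morphism of eg-Lie algebras, it then remains only to check (i) that $\bar\tau_+$ preserves the Lie bracket and (ii) the action-compatibility $\bar\tau_m({}^xy)={}^{\bar\tau_0(x)}\bar\tau_m(y)$. I would dispose of (ii) first, since it is an \emph{exact} identity in $K_0\rt G_0$: writing $x=gG_1$, $y=hG_{m+1}$ with $g\in G_0$, $h\in G_m$ and unwinding \eqref{e62}, \eqref{e63} and the action formula \eqref{e116}, both sides evaluated on $aK_{i+1}$ reduce to comparing $[{}^gh,a]$ with ${}^g[h,{}^{g^{-1}}a]$, and ${}^g[h,{}^{g^{-1}}a]=[{}^gh,{}^g({}^{g^{-1}}a)]=[{}^gh,a]$ holds on the nose.

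The crux is (i): $\bar\tau_{m+n}([g,g'])=[\bar\tau_m(g),\bar\tau_n(g')]$ for $g\in G_m$, $g'\in G_n$, $m,n\ge1$. Evaluating the $\Der_+$-bracket \eqref{e117} on $aK_{i+1}$, the left side is $[[g,g'],a]K_{m+n+i+1}$, while the right side unwinds via \eqref{e63} to $[g,[g',a]]\cdot[g',[g,a]]^{-1}$ when $i\ge1$, and to $[g,[g',a]]\cdot[g',[g,a]]^{-1}\cdot[[g,a],[g',a]]^{-1}$ when $i=0$, the extra factor coming from the term $-[d_0(a),d'_0(a)]$ in \eqref{e117}. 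I would derive both congruences from the Hall--Witt identity \eqref{ee54} applied to the triple $(g,g',a)$ in $K_0\rt G_0$, reducing each of its three factors modulo $K_{m+n+i+1}$ using $g'(a)\equiv a$, ${}^ag=[g,a]^{-1}g$, ${}^gg'=[g,g']g'$, and the commutator identities \eqref{ee7}--\eqref{ee8}; the three factors then become $[[g,g'],a]$, $[[g',a],[g,a]]^{-1}[g,[g',a]]^{-1}$, and $[g',[g,a]]$. The quadratic factor $[[g',a],[g,a]]$ lies in $K_{m+n+2i}$, so it is absorbed when $i\ge1$ (recovering the ``graded Jacobi identity'') and survives when $i=0$ (reproducing precisely the correction term). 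Conceptually this is just the Jacobi identity in the associated graded Lie algebra of the extended N-series $(K_i\rt G_i)_i$ of $K_0\rt G_0$, inside which $\bar\tau_\bu$ is nothing but the bracket of $\bGb$ against $\bKb$.

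For the injectivity criterion, set $F_*=\modF_*^{K_*}(G_0)$. A morphism in $\egL$ is injective iff each $\bar\tau_m$ is, and by \eqref{e75} we have $\ker(\bar\tau_m)=(G_m\cap F_{m+1})/G_{m+1}$; hence injectivity of $\bar\tau_\bu$ is equivalent to $G_m\cap F_{m+1}=G_{m+1}$ for every $m\ge0$. By the maximality of the Johnson filtration (Proposition~\ref{r77}) we have $G_m\subset F_m$ for all $m$, and $F_0=G_0$ because every element of $G_0$ preserves each $K_n$. If $G_*=F_*$ then $G_m\cap F_{m+1}=F_{m+1}=G_{m+1}$, so $\bar\tau_\bu$ is injective. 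Conversely, assuming all the conditions $G_m\cap F_{m+1}=G_{m+1}$, I would show $G_m=F_m$ by induction on $m$: the base case is $F_0=G_0$, and if $G_m=F_m$ then $F_{m+1}=F_m\cap F_{m+1}=G_m\cap F_{m+1}=G_{m+1}$, closing the induction.

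I expect the main obstacle to be the $i=0$ case of (i), where the eg-Lie bracket \eqref{e117} carries the genuinely nonlinear correction $-[d_0(a),d'_0(a)]$: matching it requires tracking the quadratic Hall--Witt factor $[[g',a],[g,a]]$ exactly, together with the commutator bookkeeping of \eqref{ee7}--\eqref{ee8} needed to replace ${}^ag$, ${}^gg'$, and the intervening conjugations by their representatives modulo $K_{m+n+1}$.
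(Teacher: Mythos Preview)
Your proposal is correct and follows essentially the same route as the paper: both arguments verify the Lie-bracket compatibility by applying the Hall--Witt identity \eqref{ee54} to the triple $(g,g',a)$ in $K_0\rt G_0$ and reducing modulo $K_{m+n+i+1}$ (keeping track of the quadratic term $[[g,a],[g',a]]$ that survives exactly when $i=0$), check equivariance via the exact identity ${}^g[g',{}^{g^{-1}}a]=[{}^gg',a]$, and establish the injectivity criterion by the same induction on $m$ using \eqref{e75}. Your added remark interpreting the computation as the Jacobi identity in $\gr_\bu$ of the extended N-series $(K_i\rt G_i)_{i\ge0}$ is a pleasant conceptual gloss not made explicit in the paper.
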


\begin{proof}
   We know  that $\bar\tau _m$ is a  homomorphism for {each} $m\geq 0$.
   Let us check that
  $(\bar\tau _m)_{m\ge 1}:{\bar G}_+ \to  \Der_+(\bKb)$
   preserves the Lie bracket. For
   $g\in G_m$, $g'\in G_n$, $m,n\ge 1$, $a\in K_i$, $i\ge 0$, we   have
  \begin{eqnarray*}
      &&\bar\tau _{m+n}([gK_{m+1},g'K_{n+1}])(aK_{i+1})\\
      &=&\bar\tau _{m+n}([g,g']K_{m+n+1})(aK_{i+1})\\
      &=&[ [g,g'],a]K_{m+n+i+1}\\
      &=& {\big[ [g,g'], [a,g']\cdot{}^{g'}\!a \big]  K_{m+n+i+1}}\\
      &=&   { \big[ [g,g'],{}^{g'}\!a \big]   K_{m+n+i+1} }\\
      &=&   {\big( \big[ {}^{g}\!{g'}, [a,g] \big] \cdot  \big[ {}^{a}\!g , [g',a] \big] \big) K_{m+n+i+1} }\\
      &=&   {\big( \big[ [g,g'] g' , [a,g] \big] \cdot  \big[ [a,g]g , [g',a] \big] \big) K_{m+n+i+1} } \\
      &=&   { \big( \big[ g' , [a,g] \big] \cdot  \big[ g , [g',a] \big]  \cdot  \big[ [a,g] , [g',a] \big]  \big)  K_{m+n+i+1} } \\
      &=&   { \big( \big[ g' , [g,a]^{-1} \big] \cdot  \big[ g , [g',a] \big]  \cdot  \big[ [g,a]^{-1} , [g',a] \big] \big) K_{m+n+i+1} } \\
      &=& -\bar\tau _n(g'G_{n+1})\big(\bar\tau _m(gG_{m+1})(aK_{i+1})\big) +       \bar\tau _m(gG_{m+1})\big(\bar\tau _n(g'G_{n+1})(aK_{i+1})\big)  \\
      && -\delta _{i,0}\big[\bar\tau _m(gG_{m+1})(aK_{i+1}),\bar\tau _n(g'G_{n+1})(aK_{i+1})\big]\\
      &=&\big[\bar\tau _m(gG_{m+1}),\bar\tau _n(g'G_{n+1})\big](aK_{i+1}).
  \end{eqnarray*}
  Hence $(\bar\tau _m)_{m\ge 1}$ is a morphism of graded Lie algebras.

  It remains to verify the equivariance property for $\bar\tau _\bu$.
  For $g\in G_0$, $g'\in G_m$, $m\ge 1$, $a\in K_i$, $i\ge 1$, we have
  \begin{gather*}
    \begin{split}
      \bar\tau _m\big({}^{(gG_1)}(g'G_{m+1})\big)(aK_{i+1})
      &=\bar\tau _m\big(({}^gg')G_{m+1}\big)(aK_{i+1})\\
      &=[{}^gg',a]K_{m+i+1}\\
      &={}^g\big[g',{}^{g^{-1}}\!a\big]K_{m+i+1}\\
      &=\bar\tau _0(gG_1) \Big(\bar\tau _m(g'G_{m+1})\big(\bar\tau _0(gG_1)^{-1}(aK_{i+1})\big)\Big)\\
      &=\big({}^{\bar\tau _0(gG_1)}\bar\tau _m(g'G_{m+1})\big)(aK_{i+1}).\\
    \end{split}
  \end{gather*}
  {Hence}   $\bar\tau _\bu$ is a morphism of eg-Lie algebras.

The second statement of the {theorem} says that $\bar\tau _m$ is
injective for all $m\ge 0$ if  and only if  we have
$G_m=\modF ^{K_*}_m(G_0)$ for all $m\ge 0$.
This equivalence is
easily checked by  induction on $m\geq 0$ using~\eqref{e75}.
\end{proof}

As a special case of Theorem \ref{r87}, we {obtain} the following.

\begin{corollary}
  \label{r6}
  Let $K_*$ be an extended N-series.  Then we have an injective
  morphism of eg-Lie algebras
\begin{equation}  \label{Johnson_Aut}
  \bar\tau _\bu: \grb(\Aut_*(K_*))\longrightarrow \Der_\bu(\grb(K_*)),
\end{equation}
{where $\Aut_*(K_*)$ is the Johnson filtration of $\Aut(K_*)$ defined by \eqref{A}.}
\end{corollary}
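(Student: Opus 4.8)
The plan is to derive Corollary~\ref{r6} directly from Theorem~\ref{r87} by choosing the acting extended N-series appropriately. First I would set $G = \Aut(K_*)$ and take $G_* = \Aut_*(K_*) = \modF_*^{K_*}(\Aut(K_*))$, the Johnson filtration of $\Aut(K_*)$ induced by $K_*$ as defined in~\eqref{A}. By Proposition~\ref{r77}, this $\modF_*^{K_*}(\Aut(K_*))$ is the largest extended N-series of $\Aut(K_*)$ acting on $K_*$; in particular it \emph{is} an extended N-series, and since its degree-zero term $\Aut_0(K_*) = \Aut(K_*)$ acts on $K_*$ by construction, we have a genuine action of the extended N-series $\Aut_*(K_*)$ on $K_*$ in the sense of~\eqref{e60}.

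With this action in hand, Theorem~\ref{r87} immediately supplies a morphism of eg-Lie algebras
\begin{gather*}
  \bar\tau_\bu: \grb(\Aut_*(K_*)) \longrightarrow \Der_\bu(\grb(K_*)),
\end{gather*}
which is exactly the map~\eqref{Johnson_Aut} we want to produce. The only remaining point is injectivity. Here I would invoke the second assertion of Theorem~\ref{r87}, which states that $\bar\tau_\bu$ is injective precisely when the acting series $G_*$ coincides with the Johnson filtration $\modF_*^{K_*}(G_0)$ induced by $K_*$. In our situation $G_0 = \Aut(K_*)$ and, by definition, $G_* = \Aut_*(K_*) = \modF_*^{K_*}(\Aut(K_*)) = \modF_*^{K_*}(G_0)$, so the defining equality holds tautologically and injectivity follows.

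The argument is essentially a matter of correctly matching definitions, so I expect no serious obstacle; the one point requiring a moment's care is confirming that $\Aut(K_*)$ really does act on $K_*$ (so that Theorem~\ref{r87} applies), which is immediate from the definition~\eqref{e90} since every $g \in \Aut(K_*)$ satisfies $g(K_i) = K_i$ for all $i \ge 0$. I would therefore keep the proof to a couple of sentences: specialize Theorem~\ref{r87} to $G_* = \Aut_*(K_*)$ acting on $K_*$, and observe that the equality $G_* = \modF_*^{K_*}(G_0)$ giving injectivity holds by the very definition of $\Aut_*(K_*)$ in~\eqref{A}.
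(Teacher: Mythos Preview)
Your proposal is correct and matches the paper's own treatment: the paper simply introduces Corollary~\ref{r6} with the sentence ``As a special case of Theorem~\ref{r87}, we obtain the following,'' and your argument spells out exactly that specialization, including the observation that $G_* = \Aut_*(K_*) = \modF_*^{K_*}(G_0)$ by definition, which triggers the injectivity clause of Theorem~\ref{r87}.
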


\begin{example}
  {Continuing} Examples \ref{r17} and \ref{r19}, 
    {let us} consider the adjoint actions $\Ad^{K_*}$ and $\ad^{\gr_\bu(K_*)}$.
   The morphism $\bar\tau_\bu$ in \eqref{Johnson_Aut} fits into the
  following commutative diagram:
  \begin{gather*}
    \xymatrix{
      \gr_\bu(K_*)\ar[rr]^{\gr_\bu(\Ad^{K_*})}\ar[rrd]_{\ad^{\gr_\bu(K_*)}}
      &&
      \gr_\bu(\Aut_*(K_*))\ar[d]^{\bar\tau_\bu}
      \\
      &&
      \Der_\bu(\gr_\bu(K_*)).
      }
  \end{gather*}
\end{example}

\section{Truncation of a derivation eg-Lie algebra} \label{sec:trunc-deriv-eg-1}

{Here we define the ``truncation'' $D_\bu(L_\bu)$ of the derivation
  eg-Lie algebra $\Der_\bu(L_\bu)$ of an eg-Lie algebra $L_\bu$.  This
  structure is useful {mainly} when the positive part $L_+$ of
  $L_\bu$ is a free Lie algebra generated by its {degree $1$} part.}

\subsection{Truncation of  a derivation eg-Lie algebra}  \label{sec:trunc-deriv-eg}

Let $L_\bu$ be an eg-Lie algebra.  Here we define a graded group
$D_\bu(L_\bu)=(D_m(L_\bu))_{m\ge 0}$, which we call the
\emph{truncation} of $\Der_\bu(L_\bu)$.  Set
\begin{gather}
  \label{e13}
\begin{split}
D_0(\Lbu)
&=  \{(d_0,d_1)\in \Aut(L_0)\times \Aut(L_1)\\
& \qquad \zzzvert d_1({}^ab)={}^{d_0(a)}(d_1(b))\hbox{ for $a\in L_0$, $b\in L_1$}\},
\end{split}
\end{gather}
which is a subgroup of $\Aut(L_0)\times \Aut(L_1)$.
For $m\ge 1$, {define an abelian group $D_m(\Lbu)$ by}
\begin{gather}
  \label{e15}
  \begin{split}
    D_m(\Lbu)&=      \{(d_0,d_1)\in Z^1(L_0,L_m)\times \Hom(L_1,L_{m+1})\\
    &\qquad  \zzzvert d_1({}^ab)=[d_0(a),{}^ab]+{}^a(d_1(b))\  \hbox{for } a\in L_0,b\in L_1      \},
  \end{split}
\end{gather}
where $Z^1(L_0,L_m)$ denotes the group of $L_m$-valued
  $1$-cocycles on $L_0$:
\begin{gather}
  Z^1(L_0,L_m)=    \{d_0: L_0\rightarrow L_m\zzzvert d_0(ab)=d_0(a)+{}^a(d_0(b))\text{ for $a,b\in L_0$}   \}.
\end{gather}

For every $m\geq 0$, there is a  homomorphism
\begin{gather}
  \label{e111}
  t_m: \Der_m(\Lbu)  \longrightarrow D_m(L_\bu),\quad (d_i)_{i\ge 0} \longmapsto (d_0,d_1).
\end{gather}

\begin{lemma}
  \label{r98}
  If the positive part $L_+$ of an eg-Lie algebra $L_\bu$ is
  generated by its degree $1$ part $L_1$, then  $t_m$ is injective for {each} $m\ge 0$.
\end{lemma}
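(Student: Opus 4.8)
The plan is to show that a derivation $d \in \Der_m(L_\bu)$ is completely determined by the pair $(d_0, d_1) = t_m(d)$, which immediately gives injectivity of $t_m$. The key observation is that under the hypothesis, every element of $L_i$ for $i \ge 1$ can be written as an iterated bracket of elements of $L_1$, so the derivation property forces the values $d_i$ on all of $L_+$ once $d_1$ is fixed. First I would treat the case $m \ge 1$; the case $m = 0$ is entirely analogous, replacing the derivation/cocycle conditions by the multiplicativity of automorphisms.

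\begin{proof}[Proof of Lemma \ref{r98}]
We first treat the case $m\ge 1$. Let $d=(d_i)_{i\ge 0}\in \Der_m(\Lbu)$ satisfy $t_m(d)=(d_0,d_1)=0$; we show $d=0$. Since $d_0=0$ and $d_1=0$, it suffices to prove $d_i=0$ for all $i\ge 2$, and we argue by induction on $i$. Because $L_+$ is generated by $L_1$, every element of $L_i$ is a sum of iterated brackets of elements of $L_1$; hence it suffices to show that $d_i$ vanishes on each bracket $[a,b]$ with $a\in L_j$, $b\in L_{i-j}$ and $1\le j<i$. By Definition \ref{r1}(1),
\begin{gather*}
  d_i([a,b])=[d_j(a),b]+[a,d_{i-j}(b)].
\end{gather*}
Since $j<i$ and $i-j<i$, the induction hypothesis gives $d_j(a)=0$ and $d_{i-j}(b)=0$, whence $d_i([a,b])=0$. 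Therefore $d_i=0$ on all of $L_i$, completing the induction and showing $d=0$; thus $t_m$ is injective.

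For $m=0$, let $d=(d_i)_{i\ge 0}\in \Der_0(\Lbu)=\Aut(\Lbu)$ with $t_0(d)=(d_0,d_1)=(\id,\id)$; we show $d=\id$. Exactly as above, it suffices to show by induction on $i\ge 2$ that $d_i$ acts as the identity on every bracket $[a,b]$ with $a\in L_j$, $b\in L_{i-j}$, $1\le j<i$. Since $d_+$ is an automorphism of the graded Lie algebra $L_+$,
\begin{gather*}
  d_i([a,b])=[d_j(a),d_{i-j}(b)]=[a,b],
\end{gather*}
using the induction hypothesis $d_j(a)=a$ and $d_{i-j}(b)=b$. Hence $d_i=\id$ on $L_i$, and the induction yields $d=\id$, so $t_0$ is injective.
\end{proof}

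The main point—and the only step requiring the hypothesis—is that generation of $L_+$ by $L_1$ lets the values of a degree-$m$ derivation in degrees $\ge 2$ be recovered from its value in degree $1$ via the derivation (respectively automorphism) property, so no information beyond $(d_0,d_1)$ is needed. No obstacle arises beyond being careful that the inductive unfolding of an arbitrary element of $L_i$ into brackets of degree-$1$ elements is well-defined, which is exactly the content of the generation hypothesis; the equivariance condition (Definition \ref{r1}(3)) plays no role here since it is automatically consistent once $d_0$ and $d_1$ are fixed.
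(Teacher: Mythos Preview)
Your proof is correct and follows essentially the same approach as the paper's: both argue that the kernel of $t_m$ is trivial by induction on the degree $i$, using that $L_i$ is generated by brackets of lower-degree elements and applying the derivation (resp.\ automorphism) identity. The only cosmetic difference is that the paper decomposes elements of $L_i$ as brackets $[x,y]$ with $x\in L_1$, $y\in L_{i-1}$, whereas you allow any split $[a,b]$ with $a\in L_j$, $b\in L_{i-j}$; both are valid and the paper also treats $m=0$ before $m\ge 1$ rather than after.
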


\begin{proof}
  First, we prove that the kernel of $t_0$ is trivial. {Take}  $d=(d_i)_{i\ge 0}$ {such that}
  $(d_0,d_1)=(\id_{L_0},\id_{L_1})$.
  We {prove}   $d_i=\id_{L_i}$ for all $i\ge 0$ by  induction on~$i\geq 0$.  Let $i\ge 2$.
  Since $L_1$ generates $L_+$,  $L_i$ is generated by
  the elements $[x,y]$ with $x\in L_1$, $y\in L_{i-1}$.  We have
  \begin{gather*}
    d_i([x,y])=[d_1(x),d_{i-1}(y)]=[x,y]
  \end{gather*}
  by the induction hypothesis. Hence   $d_i=\id_{L_i}$.

   {Now we} prove that the kernel of $t_m$ is
  trivial {for $m\ge 1$}. {Take}   $d=(d_i)_{i\ge 0}$ {with}
  $(d_0,d_1)=(0,0)$.
  We {prove}   $d_i=0$ for all $i\ge 0$ by  induction on~$i\geq 0$.  Let $i\ge 2$.
  Since $L_1$ generates $L_+$,  $L_i$ is
  generated by the elements $[x,y]$ with $x\in L_1$, $y\in L_{i-1}$.  We
  have
  \begin{gather*}
    d_i([x,y])=[d_1(x),y]+[x,d_{i-1}(y)]=0
  \end{gather*}
  by the induction hypothesis. Hence   $d_i=0$.
\end{proof}

\begin{lemma}
  \label{r93}
Let $L_+=\bigoplus_{i\ge 1}L_i$
be the graded Lie algebra freely generated by an abelian group $A$ in degree $1$.
For $m \geq 1$, {every} homomorphism $d_1: A=L_1 \to L_{m+1}$ {extends}
(uniquely) to a derivation $d$ of $L_+$ of degree~$m$.
\end{lemma}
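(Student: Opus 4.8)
The plan is to treat uniqueness and existence separately, with uniqueness being essentially immediate. Since $L_+$ is generated as a Lie algebra by its degree $1$ part $A=L_1$, any two degree $m$ derivations $d,d'$ that agree on $L_1$ agree everywhere: writing an arbitrary element of $L_i$ for $i\ge 2$ as a sum of brackets $[x,y]$ with $x\in L_1$, $y\in L_{i-1}$, the Leibniz rule $d([x,y])=[d(x),y]+[x,d(y)]$ together with the inductive hypotheses $d|_{L_{i-1}}=d'|_{L_{i-1}}$ and $d|_{L_1}=d'|_{L_1}$ forces $d|_{L_i}=d'|_{L_i}$. This is exactly the kind of argument already used in the proof of Lemma~\ref{r98}.

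For existence, the clean route is to invoke the universal property of the free Lie algebra applied to an auxiliary \emph{square-zero extension}, rather than to define $d$ by the Leibniz formula directly. I would form the graded abelian group $\hat{L}=L_+\oplus \epsilon L_+$, where $\epsilon L_+$ is a second copy of $L_+$ with degrees shifted down by $m$ (so that the copy $\epsilon x$ of $x\in L_k$ sits in degree $k-m$), and equip it with the bracket $[x_1+\epsilon y_1, x_2+\epsilon y_2]=[x_1,x_2]+\epsilon([y_1,x_2]+[x_1,y_2])$, i.e.\ $\epsilon^2=0$. One checks readily that $\hat{L}$ is a Lie algebra and that the projection $\pi\colon \hat{L}\to L_+$, $x+\epsilon y\mapsto x$, is a Lie algebra homomorphism with abelian kernel $\epsilon L_+$. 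Under this dictionary a degree $m$ derivation $d$ of $L_+$ is exactly a degree-preserving section $\phi=\id+\epsilon d$ of $\pi$: the homomorphism property of $\phi$ unwinds precisely into the Leibniz identity for $d$.

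With this setup the construction is forced. The assignment $a\mapsto a+\epsilon\, d_1(a)$ sends $A=L_1$ into $\hat{L}$ and lands in degree $1$, since $d_1(a)\in L_{m+1}$ gives $\epsilon d_1(a)$ degree $(m+1)-m=1$. By the universal property of the free Lie algebra $L_+=L(A)$, this extends uniquely to a Lie algebra homomorphism $\phi\colon L_+\to\hat{L}$, which is automatically degree-preserving because it is determined on degree $1$ generators landing in $\hat{L}_1$. Composing with $\pi$ yields a homomorphism $L_+\to L_+$ restricting to $\id_A$ on generators, so $\pi\circ\phi=\id$ by the uniqueness clause; hence $\phi=\id+\epsilon d$ for a unique degree $m$ derivation $d$ of $L_+$, and $d|_{L_1}=d_1$ by construction.

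The one step requiring genuine care---and the reason for routing through the universal property rather than defining $d$ by recursion on brackets---is \emph{well-definedness}. A naive recursive definition via $d([x,y])=[d(x),y]+[x,d(y)]$ would have to be shown to respect the defining relations of the free Lie algebra, namely antisymmetry and the Jacobi identity; verifying this consistency by hand is the real content, and it is exactly what the freeness of $L_+$ discharges automatically once the target homomorphism into $\hat{L}$ has been exhibited. As an alternative that makes the same point, one may instead embed $L_+\hookrightarrow T(A)$ into the tensor ring, extend $d_1$ to the unique derivation of the free associative ring $T(A)$ determined by its values on the generators $A$, and check by induction on bracket length that this derivation preserves the Lie subring $L_+$; restricting it recovers the same $d$.
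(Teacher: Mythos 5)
Your proof is correct, but it takes a genuinely different route from the paper's. The paper presents $L_+$ as a quotient $M/I$ of the free non-unital, non-associative algebra $M$ on $A$, lifts $d_1$ to $M_{m+1}$, extends it to a derivation of $M$ (which is trivially possible since $M$ is free as a magma algebra), and then checks by hand that the ideal $I$ of alternating and Jacobi relations is preserved, so the derivation descends. You instead package a degree $m$ derivation as a degree-preserving section of the square-zero extension $\hat L\to L_+$ and invoke the universal property of the free Lie algebra on $A$, so that the consistency with the defining relations is discharged automatically rather than verified; your closing remarks correctly identify this as the crux. Both arguments work for an arbitrary abelian group $A$, which is the point of the lemma (the paper notes that the standard reference only covers free abelian $A$). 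Two small caveats on your side: as written $\hat L=L_+\oplus\epsilon L_+$ has components in degrees $\le 0$, so to stay within the paper's convention of positively graded Lie algebras you should either truncate $\epsilon L_+$ to $\epsilon L_{\ge m+1}$ (which is all that is ever used, and is closed under the bracket) or allow $\modZ$-gradings; and your alternative via $L_+\hookrightarrow T(A)$ is not available in this generality without further argument, since the injectivity of $L(A)\to T(A)$ is a theorem for free abelian $A$ and is not obvious for arbitrary $A$ --- which is precisely why the paper avoids that route. Your main argument does not rely on this, so the proof stands.
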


{This lemma is well known at least for $A$ a free abelian
  group. {(See \cite[Lemma~0.7]{Reutenauer} for instance.)}
  We give a proof here since we could not find a suitable reference  for the general case.}

\begin{proof}[Proof of Lemma \ref{r93}]
  Let $M=\bigoplus_{i\ge 1}M_i$ be the non-unital,
  non-associative algebra freely generated by  $A$ in degree $1$. (Thus we have  $M_1=A$, $M_2=A\otimes A$,
  $M_3=A\otimes (A\otimes A)\oplus(A\otimes A)\otimes A$, etc.)
  {Let $*: M\times M\rightarrow M$ denote}  the multiplication in $M$. Then the free Lie algebra $L_{+}$ may be defined as
  the quotient $M/I$ of $M$ by the ideal $I$ generated by the elements
  \begin{gather*}
    b*b,\quad b_1*(b_2*b_3)+b_2*(b_3*b_1)+b_3*(b_1*b_2)
  \end{gather*}
  for all $b,b_1,b_2,b_3\in M$.

\def\td{\ti{d}}
  Let $\td_1: M_1 \rightarrow M_{m+1}$ be a lift of $d_1$ to $M_{m+1}$, i.e.,
  we require that {the diagram}
  \begin{gather*}
    \xym{
      M_1\ar[d]_{\id_A}^{\cong}\ar[r]^{\td_1}
      &
      M_{m+1}\ar[d]^{p}
      \\
      L_1\ar[r]_{d_1}
      &
      L_{m+1}
    }
  \end{gather*}
  {commutes},  where $p$ denotes the   projection.  The map $\td_1$ extends uniquely to a
  degree $m$ derivation $\td_+=(\td_{i}: M_i\rightarrow M_{m+i})_{i\ge 1}$ of~$M$.
  {One easily checks}   $\td_+(I)\subset I$.
  Therefore, $\td_+$ induces a family of  homomorphisms
  $d_+=(d_i: L_i\rightarrow L_{m+i})_{i\ge 1}$.
  Clearly, $d_+$ is a degree $m$ derivation  of~$L_+$.
\end{proof}

\begin{proposition}
  \label{r92}
    If the positive part $L_+$    of an eg-Lie algebra $L_\bu$ is  freely generated by its degree  $1$ part $L_1$,
    then $t_m$ is an isomorphism for all $m\ge 0$.
\end{proposition}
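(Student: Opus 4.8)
The plan is to prove the injectivity and the surjectivity of $t_m$ separately. Injectivity comes for free from Lemma \ref{r98}, since a free Lie algebra is in particular generated by its degree $1$ part. So the real content is surjectivity: given $(d_0,d_1)\in D_m(\Lbu)$, I must build a full derivation $d=(d_i)_{i\ge0}\in\Der_m(\Lbu)$ with $t_m(d)=(d_0,d_1)$. I would treat the cases $m\ge1$ and $m=0$ in parallel, the former being representative.

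For $m\ge1$, I would first invoke Lemma \ref{r93}: since $L_+$ is free on $L_1$, the homomorphism $d_1\colon L_1\to L_{m+1}$ extends uniquely to a degree $m$ derivation $d_+=(d_i)_{i\ge1}$ of $L_+$. Pairing this with the given $1$-cocycle $d_0$ yields a candidate $d=(d_i)_{i\ge0}$ for which conditions (1) and (2) of Definition \ref{r1} hold by construction and $t_m(d)=(d_0,d_1)$. The only thing left to check is the equivariance condition (3),
\[
d_i({}^ab)=[d_0(a),{}^ab]+{}^a(d_i(b)) \qquad (a\in L_0,\ b\in L_i,\ i\ge1),
\]
which is guaranteed in degree $i=1$ precisely because $(d_0,d_1)\in D_m(\Lbu)$.

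I would establish (3) for all $i$ by induction. As $L_1$ generates $L_+$, it suffices to check it on $b=[x,y]$ with $x\in L_1$, $y\in L_{i-1}$. Expanding $d_i({}^a[x,y])=d_i([{}^ax,{}^ay])$ through the derivation property of $d_+$ and substituting the degree $1$ compatibility for $d_1({}^ax)$ together with the induction hypothesis for $d_{i-1}({}^ay)$, the target identity collapses to the single relation
\[
[d_0(a),[{}^ax,{}^ay]]=[[d_0(a),{}^ax],{}^ay]+[{}^ax,[d_0(a),{}^ay]],
\]
which is nothing but the Jacobi identity in $L_+$. Here I also use that the $L_0$-action respects the bracket (equation \eqref{e94}), so ${}^a[x,y]=[{}^ax,{}^ay]$. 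The case $m=0$ is an easier version of the same induction: $d_1\in\Aut(L_1)$ extends by freeness to an automorphism $d_+$ of $L_+$, and the equivariance $d_i({}^ab)={}^{d_0(a)}(d_i(b))$ propagates from degree $1$ using only that the action commutes with the bracket, with no Jacobi identity needed.

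The hard part will be the inductive step for $m\ge1$: one must see that the ``twist'' $[d_0(a),-]$ carried by the $1$-cocycle part $d_0$ propagates consistently to every degree. The resolution is exactly the observation above, namely that the Leibniz form of the Jacobi identity makes the cross-terms cancel; everything else is bookkeeping with Definition \ref{r1} and Lemma \ref{r93}.
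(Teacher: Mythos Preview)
Your proposal is correct and follows essentially the same approach as the paper: injectivity from Lemma~\ref{r98}, then for surjectivity extend $d_1$ via Lemma~\ref{r93} (or freeness, when $m=0$) and verify condition~(3) of Definition~\ref{r1} by induction on $i$, reducing the inductive step on brackets $[x,y]$ with $x\in L_1$, $y\in L_{i-1}$ to the Jacobi identity. The paper writes out the inductive computation explicitly rather than summarizing it, but the argument is the same.
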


\begin{proof}
  By {Lemma} \ref{r98}, $t_m$ is injective.  Thus it
  suffices to check that if $(d_0,d_1)\in D_m(L_\bu)$, then it extends to at
  least one $(d_i)_{i\ge 0}\in \Der_m(L_\bu)$.

{First, let $m=0$.}
The automorphism $d_1$ of  $L_1$ extends uniquely
to an automorphism $d_+=(d_i: L_i\rightarrow L_{i})_{i\ge 1}$ of the graded Lie algebra $L_+$.
It suffices to prove the equivariance property, i.e.,
  \begin{gather}
    \label{e87}
    d_i({}^ab)={}^{d_0(a)}(d_i(b))
  \end{gather}
 for $a\in L_0$, $b\in L_i$, $i\ge 1$, which is verified by  induction on $i\geq 1$.

{Now, let $m\geq1$.}
  By Lemma \ref{r93}, we can extend the  homomorphism $d_1$ to
   a derivation $d_+=(d_i: L_i\rightarrow L_{m+i})_{i\ge 1}$ of $L_+$ of degree $m$.
  It suffices to prove that
  \begin{gather}
    \label{e88}
    d_i({}^ab)=[d_0(a),{}^ab]+{}^a(d_i(b))
  \end{gather}
  for $a\in L_0$, $b\in L_i$, $i\ge 1$.
  The proof is by induction on $i\geq 1$.
  Let $i\ge 2$.  We may assume $b=[b',b'']$, $b'\in L_1$, $b''\in L_{i-1}$.  Then we have
  \begin{gather*}
    \begin{split}
    d_i({}^ab)
    &=d_i([{}^ab',{}^ab''])\\
    &=[d_1({}^ab'),{}^ab'']+[{}^ab',d_{i-1}({}^ab'')]\\
    &=\big[[d_0(a),{}^ab']+{}^a(d_1(b')),{}^ab''\big]+\big[{}^ab',[d_0(a),{}^ab'']+{}^a(d_{i-1}(b''))\big]\\
    &=\big[[d_0(a),{}^ab'],{}^ab''\big]+[{}^a(d_1(b')),{}^ab'']
    +\big[{}^ab',[d_0(a),{}^ab'']\big]    +[{}^ab',{}^a(d_{i-1}(b''))]\\
    &=\big[d_0(a),[{}^ab',{}^ab'']\big]+{}^a[d_1(b'),b'']+{}^a[b',d_{i-1}(b'')]\\
    &=\big[d_0(a),{}^a[b',b'']\big]+{}^a(d_i([b',b''])) =[d_0(a),{}^ab]+{}^a(d_i(b)),
    \end{split}
  \end{gather*}
  where the third identity is given by the induction hypothesis.
 \end{proof}

\subsection{The eg-Lie algebra structure of the truncation}  \label{sec:eg-lie-algebra}

Let $L_\bu$ be an eg-Lie algebra whose {positive part}
$L_+$ is freely generated by $L_1$.  By~Proposition \ref{r92},
$D_\bu(L_\bu)$ {is endowed} with a unique eg-Lie algebra structure such that
\begin{equation}  \label{t_map}
  t_\bullet =(t_m)_{m\ge 0}: \Der_\bu(L_\bu)\longrightarrow D_\bu(L_\bu)
\end{equation}
is an eg-Lie algebra isomorphism.
The following is easily derived from the definition of $\Der_\bu(L_\bu)$  given in Section \ref{sec:derivation-eg-lie}.

\begin{proposition}
  \label{r99}
  Let $L_\bu$ be an eg-Lie algebra such that $L_+$ is freely generated by $L_1$ as a graded Lie algebra.
  Then the graded group $D_\bu(L_\bu)$ has the
  following eg-Lie algebra structure.
\begin{enumerate}
  \item {The} Lie bracket $[d,d']\in D_{m+n}(L_\bu)$ of $d=(d_0,d_1)\in D_m(L_\bu)$ and $d'=(d'_0,d'_1)\in D_n(L_\bu)$
  with  $m,n\ge 1$ is defined by
    \begin{gather*}
      \begin{split}
      [d,d']_0(a)  &=  d_n(d'_0(a))-d'_m(d_0(a))-[d_0(a),d'_0(a)] \quad  \hbox{for } a\in L_0,\\
      {[d,d']_1(b)}  &= d_{n+1}(d'_1(b))-d'_{m+1}(d_1(b))\quad\quad
      \quad \quad \quad \quad   \hbox{for }  b\in L_1,
      \end{split}
    \end{gather*}
    where $d_+=(d_i)_{i\geq 1}$ and $d'_+=(d'_j)_{j\geq1}$
    {are the derivations of $L_+$ extending $d_1$ and $d'_1$, respectively.}
  \item {The} action  ${}^fd\in D_m(L_\bu)$ of $f=(f_0,f_1)\in D_0(L_\bu)$ on $d=(d_0,d_1)\in D_m(L_\bu)$
  with $m\ge 1$ is defined by
    \begin{gather*}
      \begin{split}
	({}^fd)_0(a)&=f_md_0f_0^{-1}(a)\quad \hbox{for } a\in L_0,\\
	({}^fd)_1(b)&=f_{m+1}d_1f_1^{-1}(b)\quad  \hbox{for } b\in L_1,
      \end{split}
    \end{gather*}
    where $f_+=(f_i)_{i\geq 1}$ is the automorphism of $L_+$ {extending}~$f_1$.
  \end{enumerate}
\end{proposition}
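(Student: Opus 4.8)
The plan is to transport the already-established eg-Lie algebra structure on $\Der_\bu(L_\bu)$ across the isomorphism $t_\bu$ and then simply read off the formulas. By Proposition \ref{r92}, when $L_+$ is freely generated by $L_1$, each $t_m: \Der_m(L_\bu)\to D_m(L_\bu)$ is a bijection, and the inverse sends a pair $(d_0,d_1)$ to the unique derivation $(d_i)_{i\ge 0}\in\Der_m(L_\bu)$ extending it (with $d_+$ the unique derivation of $L_+$ extending $d_1$, as produced by Lemma \ref{r93} for $m\ge 1$ and by the standard extension for $m=0$). Thus $D_\bu(L_\bu)$ receives a unique eg-Lie algebra structure making $t_\bu$ an isomorphism, as already noted in the text preceding the proposition. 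The content of Proposition \ref{r99} is just the explicit description of the bracket and the action in terms of the representatives $(d_0,d_1)$.

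First I would treat the Lie bracket. Given $d=(d_0,d_1)\in D_m(L_\bu)$ and $d'=(d'_0,d'_1)\in D_n(L_\bu)$, let $\hat d=t_m^{-1}(d)$ and $\hat d'=t_n^{-1}(d')$ be the corresponding full derivations in $\Der_\bu(L_\bu)$. Since $t_\bu$ is to be a morphism, the bracket in $D_\bu$ must satisfy $[d,d']=t_{m+n}([\hat d,\hat d'])$, so I just evaluate the right-hand side using formula~\eqref{e117} from Theorem~\ref{r2} in degrees $0$ and $1$. In degree $0$ this gives $[\hat d,\hat d']_0(a)=\hat d_n(\hat d'_0(a))-\hat d'_m(\hat d_0(a))-[\hat d_0(a),\hat d'_0(a)]$, and writing $d_n,d'_m$ for the components of the extended derivations and $d_0=\hat d_0$, $d'_0=\hat d'_0$, this is exactly the claimed formula for $[d,d']_0$. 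In degree $1$, \eqref{e117} reads $\hat d_{n+1}(\hat d'_1(a))-\hat d'_{m+1}(\hat d_1(a))$, which matches the stated formula for $[d,d']_1$. The point to record is only that the higher components $d_{n+1}$, $d'_{m+1}$ appearing here are precisely the components of the unique derivations of $L_+$ extending $d_1$ and $d'_1$, so the formula is well-defined on $D_\bu$.

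Next I would do the same for the action, using \eqref{e116} from Theorem~\ref{r101}. For $f=(f_0,f_1)\in D_0(L_\bu)$ and $d=(d_0,d_1)\in D_m(L_\bu)$, set $\hat f=t_0^{-1}(f)$ and $\hat d=t_m^{-1}(d)$; then the action on $D_\bu$ is forced to be $t_m({}^{\hat f}\hat d)$, and \eqref{e116} gives $({}^{\hat f}\hat d)_i(a)=\hat f_{m+i}\hat d_i\hat f_i^{-1}(a)$. Reading off $i=0$ and $i=1$ yields $({}^fd)_0(a)=f_m d_0 f_0^{-1}(a)$ and $({}^fd)_1(b)=f_{m+1}d_1 f_1^{-1}(b)$, exactly the claimed formulas, where $f_m,f_{m+1}$ are components of the automorphism $f_+$ of $L_+$ extending $f_1$. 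Since $t_\bu$ is already known to be a bijection intertwining these operations by construction, nothing further needs verifying: the eg-Lie algebra axioms for $D_\bu(L_\bu)$ are inherited from $\Der_\bu(L_\bu)$ via the isomorphism.

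The only genuinely substantive point — rather than an obstacle, it is the crux — is that the formulas involve the higher components $d_n,d'_m,f_m,f_{m+1}$ of the extended derivations/automorphism, not just the representatives $(d_0,d_1)$; so I must emphasize that these higher components are canonically determined by $d_1$ (resp.\ $f_1$) through the free-generation hypothesis via Lemma~\ref{r93} and Proposition~\ref{r92}. Everything else is a mechanical transcription of \eqref{e117} and \eqref{e116} under $t_\bu$, which is why the statement can honestly be prefaced with ``easily derived.'' I would phrase the proof as: ``This is immediate from the definition of the isomorphism $t_\bu$ together with formulas \eqref{e117} and \eqref{e116}, upon noting that the higher-degree components of the derivations extending $d_1,d'_1$ (and of the automorphism extending $f_1$) are uniquely determined by Proposition~\ref{r92}.''
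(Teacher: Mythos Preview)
Your proposal is correct and follows exactly the paper's approach: the paper merely states that the proposition ``is easily derived from the definition of $\Der_\bu(L_\bu)$,'' after having already noted that $t_\bu$ is an isomorphism by Proposition~\ref{r92} and that $D_\bu(L_\bu)$ inherits its structure through it. Your write-up simply spells out this derivation by reading off \eqref{e117} and \eqref{e116} in degrees $0$ and $1$, which is precisely what the paper intends.
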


\section{Extended N-series associated with N-series}  \label{sec:en-series-associated-1}

{In this section, we} illustrate the constructions of the previous sections with the extended N-series defined by N-series.

\subsection{Extended N-series associated with N-series}   \label{sec:en-series-associated}

Let $K_+=(K_m)_{m\ge 1}$ be an N-series of a group $K=K_1$.
We consider here the extended N-series $K_*=(K_m)_{m\ge 0}$ obtained by setting $K_0=K_1=K$.

{By an}  \emph{action} of an extended N-series $G_*$ on
$K_+$ {we mean an action of $G_*$ on $K_*$.}

Let $L_+$ be a graded Lie algebra.  Let $\Der_0(L_+)
=\Aut(L_+)$ be the {automorphism group} of $L_+$
and, for $m\geq 1$, let $\Der_m(L_+)$ {denote} the
group of derivations of $L_+$ of degree $m$.
We call $\Der_+(L_+) =  (\Der_m(L_+)  )_{m\geq 1}$
the graded Lie algebra of \emph{positive-degree derivations} of $L_+$.
The group $\Aut(L_+)$ acts on $\Der_+(L_+)$ by {conjugation}.
Thus $\Der_\bu(L_+) =  (\Der_m(L_+)  )_{m\geq 0}$ is an eg-Lie algebra.

{Theorem \ref{r87} implies the following.}

\begin{corollary} \label{simpler}
{Let an extended N-series $G_*$ act}
  on an N-series $K_+$,
  and {let} ${\bar G}_\bu = \grb(G_*)$, ${\bar K}_+ =
  \gr_+(K_+)$.  Then the family $\bar\tau _\bu=(\bar\tau_m)_{m\ge 0}$ of all homomorphisms $\bar\tau _m$ defined
  by \eqref{e67} is a morphism of eg-Lie algebras
  \begin{equation} \label{Johnson_N}
    \bar\tau _\bu: {\bar G}_\bu \longrightarrow \Der_\bu(\bar K_+).
  \end{equation}
  {Moreover}, $\bar\tau _\bu$ is injective if and only if
 {$G_*$ is the Johnson filtration $\modF _*^{K_*}(G_0)$.}
\end{corollary}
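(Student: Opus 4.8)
The plan is to deduce Corollary \ref{simpler} directly from Theorem \ref{r87} by specializing to the extended N-series $K_*$ with $K_0=K_1=K$ and then identifying the target $\Der_\bu(\grb(K_*))$ with the eg-Lie algebra $\Der_\bu(\bar K_+)$ described just before the statement. First I would recall that, by the definition of an action of $G_*$ on an N-series $K_+$, such an action is precisely an action of $G_*$ on this $K_*$; hence Theorem \ref{r87} applies verbatim and produces a morphism of eg-Lie algebras $\bar\tau_\bu: {\bar G}_\bu \to \Der_\bu(\grb(K_*))$ which is injective if and only if $G_*=\modF_*^{K_*}(G_0)$. The homomorphisms $\bar\tau_m$ of \eqref{e67} depend only on the groups $G_m$ and the subquotients $K_i/K_{i+1}$, so they are literally the same maps in both settings.

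The substance of the argument is therefore the identification $\Der_\bu(\grb(K_*))=\Der_\bu(\bar K_+)$. The key observation is that $\gr_0(K_*)=K_0/K_1=\{1\}$ is trivial, because $K_0=K_1$. Consequently, in the eg-Lie algebra $\bar K_\bu=\grb(K_*)$ the degree-$0$ group is trivial, and its action on $\bar K_+=\gr_+(K_+)$ given by \eqref{e96} is trivial as well. I would then unwind Definition \ref{r1}: for a derivation $d=(d_i)_{i\ge 0}$ of $\bar K_\bu$ of degree $m\ge 1$, the $1$-cocycle $d_0:\bar K_0\to\bar K_m$ on the trivial group is forced to vanish (a $1$-cocycle sends the identity to $0$), while compatibility condition (3) becomes vacuous since ${}^ab=b$. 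Thus $\Der_m(\bar K_\bu)$ is exactly the group $\Der_m(\bar K_+)$ of degree-$m$ derivations of the graded Lie algebra $\bar K_+$. In degree $0$, an automorphism of $\bar K_\bu$ reduces to an automorphism of the trivial group together with one of $\bar K_+$, so $\Der_0(\bar K_\bu)=\Aut(\bar K_\bu)=\Aut(\bar K_+)$.

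Finally, I would confirm that the eg-Lie algebra structures coincide, not merely that the underlying groups match. In the bracket formula \eqref{e117}, the degree-$0$ component vanishes identically once all cocycles $d_0$ are zero, so the bracket reduces to the ordinary commutator bracket of derivations of $\bar K_+$ from Theorem \ref{r2}; and the action \eqref{e116} of $\Der_0=\Aut(\bar K_+)$ on $\Der_m$ specializes to conjugation $({}^fd)_i=f_{m+i}d_if_i^{-1}$. These are precisely the structure maps of $\Der_\bu(\bar K_+)$ recalled before the statement, so the two eg-Lie algebras are equal. With this identification, the two assertions of the corollary are exactly those of Theorem \ref{r87}. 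I do not expect a genuine obstacle: the whole proof is a bookkeeping specialization, and the only point demanding care is checking that the triviality of $\gr_0(K_*)$ simultaneously kills the cocycle data in Definition \ref{r1} and the degree-$0$ summand of \eqref{e117}, so that the target eg-Lie algebra is literally $\Der_\bu(\bar K_+)$.
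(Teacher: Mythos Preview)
Your proposal is correct and follows the same approach as the paper, which simply states that Theorem \ref{r87} implies the corollary without giving any further proof. You have supplied the routine identification $\Der_\bu(\grb(K_*))=\Der_\bu(\bar K_+)$ that the paper leaves implicit, and your verification that the triviality of $\bar K_0$ kills the $1$-cocycle data and reduces both the bracket \eqref{e117} and the action \eqref{e116} to the standard structures on $\Der_\bu(\bar K_+)$ is exactly what is needed.
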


In the rest of this section, we consider $N$-series with special
properties (called \emph{N$_0$-series} and \emph{N$_p$-series}).
We show   that if a group $G$ acts on such a
special N-series, then the positive part of the Johnson filtration of
$G$ is an N-series of the same kind.

\subsection{N$_0$-series}   \label{sec:0-restricted}

An \emph{N$_0$-series} of a group $K$ is an N-series $K_+$ such that
$K/K_{m}$ is torsion-free for all $m\geq 1$.

{An} N-series $K_+$ can be transformed {into} {an N$_0$}-series
${\sqrt{K_+}}$ {by considering the root sets of its successive terms.
Specifically, we define for all $m\geq 1$}
$$
{\sqrt{K_m}} =  \{ x\in K\, \vert\, x^i \in K_m \hbox{ for some } i\geq 1 \}.
$$ See \cite[\S IV.1.3]{Passi} {or \cite[\S 11, Lemma~1.8]{Passman} in
the case of the lower central series,} and \cite[Lemma
4.4]{Massuyeau_DSP} {in the general case}.  {Note that ${\sqrt{K_+}}$
is the smallest N$_0$-series of~$K$ containing $K_+$: thus,}
$\sqrt{K}_+ = K_+$ if and only if $K_+$ is {an N$_0$-series.}

\begin{example}
  \label{r18}
  The \emph{rational lower central series} of a group $K$ is the
  N$_0$-series $\sqrt{\Gamma _+K}=(\sqrt{\Gamma _mK})_{m\ge 1}$ associated to the
  lower central series $\Gamma _+K$ of $K$.  It is the smallest
  N$_0$-series of $K$.
\end{example}

\begin{proposition} \label{N_0}
Let a group $G$ act on an N$_0$-series~$K_+$.  Then {the positive
part $\modF _+^{K_*}(G)$ of the Johnson filtration
$\modF^{K_*}_*(G)$} is an N$_0$-series.
\end{proposition}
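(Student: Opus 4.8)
The plan is to prove the statement by induction on $m$, showing that each quotient $\modF_1^{K_*}(G)/\modF_m^{K_*}(G)$ is torsion-free. By Proposition~\ref{r77} the filtration $\modF_*^{K_*}(G)$ is an extended N-series, so its positive part $\modF_+^{K_*}(G)$ is an N-series of the group $\modF_1^{K_*}(G)$; hence by definition ``$\modF_+^{K_*}(G)$ is an N$_0$-series'' is exactly the torsion-freeness of all these quotients. Throughout I abbreviate $\modF_m=\modF_m^{K_*}(G)$ and set $\bK_m=K_m/K_{m+1}=\gr_m(K_+)$.

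The argument rests on two torsion-freeness facts. First, each graded piece $\bK_m$ is torsion-free: since $K_+$ is an N$_0$-series, $K/K_{m+1}$ is torsion-free, and $\bK_m=K_m/K_{m+1}$ is a subgroup of $K/K_{m+1}$, hence torsion-free. Second, for $m\ge 1$ the abelian group $\Der_m(\bK_+)$ is torsion-free: a degree $m$ derivation is a family of homomorphisms $(d_i\colon \bK_i\to \bK_{m+i})_{i\ge 1}$, so $\Der_m(\bK_+)$ is a subgroup of $\prod_{i\ge 1}\Hom(\bK_i,\bK_{m+i})$, and each $\Hom(\bK_i,\bK_{m+i})$ is torsion-free because its target $\bK_{m+i}$ is (a homomorphism killed by $n$ has image in the $n$-torsion of $\bK_{m+i}$, which is trivial).

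To run the induction I combine these with the Johnson homomorphisms. Since $\modF_*^{K_*}(G)$ is by construction the Johnson filtration of $G=\modF_0$, the kernel formula~\eqref{e75} (equivalently, the injectivity clause of Theorem~\ref{r87}) shows that $\bar\tau_m\colon \bG_m=\modF_m/\modF_{m+1}\to\Der_m(\bK_+)$ of Corollary~\ref{simpler} is injective. Thus $\modF_m/\modF_{m+1}$ embeds into the torsion-free group $\Der_m(\bK_+)$ and is itself torsion-free. The base case $\modF_1/\modF_1=\{1\}$ is trivial, and for the inductive step I use the short exact sequence
\[
1 \longrightarrow \modF_m/\modF_{m+1} \longrightarrow \modF_1/\modF_{m+1} \longrightarrow \modF_1/\modF_m \longrightarrow 1,
\]
whose kernel is torsion-free by the previous sentence and whose quotient is torsion-free by the inductive hypothesis. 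An extension of a torsion-free group by a torsion-free group is torsion-free (a finite-order element maps to the identity of the torsion-free quotient, hence lies in the torsion-free kernel, hence is trivial), so $\modF_1/\modF_{m+1}$ is torsion-free, completing the induction.

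I expect the only delicate point to be bookkeeping rather than mathematics: one must keep track that the relevant ambient group for the N-series $\modF_+^{K_*}(G)$ is $\modF_1$ and not $G$, and that the N$_0$-condition is torsion-freeness of $\modF_1/\modF_m$. Once the correct quotients are identified, the two torsion-freeness facts and the extension lemma assemble mechanically. The genuinely substantive input is the injectivity of $\bar\tau_m$, which is what lets me transport the torsion-freeness of the derivation algebra back to the associated graded of the Johnson filtration.
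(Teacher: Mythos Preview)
Your proof is correct and follows essentially the same approach as the paper: both use the injectivity of $\bar\tau_m$ (which holds because $\modF_*^{K_*}(G)$ is itself the Johnson filtration) to embed $\modF_m/\modF_{m+1}$ into the torsion-free group $\Der_m(\bK_+)$. The only difference is that you spell out the induction via short exact sequences to pass from torsion-freeness of the successive quotients $\modF_m/\modF_{m+1}$ to torsion-freeness of $\modF_1/\modF_m$, a step the paper leaves to the reader.
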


\begin{proof}
Set {$G_*=\modF _*^{K_*}(G)$.} {By}  Proposition~\ref{r77},
${G_+} $ is an N-series of ${G_1}$.
Therefore, it remains to show that ${G_m/G_{m+1}}$ is torsion-free for $m\geq 1$.
By Corollary \ref{simpler},  the {$m$th} Johnson homomorphism induces an injection
$$
\bar \tau_m: G_m/G_{m+1} \longrightarrow \Der_m(\bar{K}_+).
$$
Hence it suffices to check that  $\Der_m(\bar{K}_+)$ is torsion-free.
This follows {since} $\bar{K}_+$ itself is torsion-free.
\end{proof}

\subsection{N$_p$-series}  \label{sec:p-restricted}

Let $p$ be a prime.
An \emph{N$_p$-series} of a  group $K$ is an N-series  $K_+$ such that
$(K_m)^p \subset K_{mp}$ for all $m\geq 1$.
{By} a result of Lazard \cite[Corollary 6.8]{Lazard},  $\bar K_+ = \bigoplus_{i\geq 1} K_i/K_{i+1}$
is a restricted Lie algebra over the field $\mathbb{F}_p = \Z/p\Z$, whose $p$-operation
$$
(\cdot)^{[p]}: \bar K_+ \longrightarrow \bar K_+
$$
is defined by $(xK_{i+1})^{[p]}= (x^p K_{ip+1})$ for $x\in K_i$, $i\geq 1$.

Every N-series $K_+$ can be transformed into an N$_p$-series $K^{[p]}_+$  defined by
$$
K^{[p]}_m  =  \prod_{i\geq 1,\,  j\geq 0,\, ip^j \geq m} K_i^{p^j}  \quad \hbox{for } m\geq1.
$$
See \cite[\S IV.1.22]{Passi} or \cite[\S 11, Lemma 1.18]{Passman} {in the case of the lower central series}, and \cite[Lemma 4.6]{Massuyeau_DSP} {in the general case}.
{Note that  $K^{[p]}_+$ is the smallest N$_p$-series of $K$ containing $K_+$: thus,}
 $K^{[p]}_+=K_+$ if and only if $K_+$ is {an N$_p$-}series.

\begin{example}
  The \emph{Zassenhaus mod-$p$ lower central series} (also called   the \emph{Zassenhaus filtration}) of {a group} $K$
  \cite{Zassenhaus} is the N$_p$-series $\Gamma ^{[p]}_+K$
  associated to the lower central series $\Gamma _+K$ of $K$:
  \begin{equation} \label{Zassenhaus}
    \Gamma ^{[p]} _m K = \prod_{i\geq 1,\,  j\geq 0,\, ip^j \geq m}
    (\Gamma _i K)^{p^j}
    \quad\text{for $m\ge 1$}.
  \end{equation}
      {This ``mod-$p$'' variant of $\Gamma_+ K$ should not be confused
  with the \emph{Stallings mod-$p$ lower central series} (also called
  {the} \emph{lower exponent-$p$ central series}) $\Gamma ^{\la
  p\ra}_+K$ \cite{Stallings}, which is defined} inductively by $\Gamma
  ^{\la p\ra}_1 K = K$ and
\begin{gather}
\label{e78}\Gamma ^{\la p\ra}_{m+1}K=(\Gamma ^{\la p\ra}_mK)^{p}\,[K,\Gamma ^{\la p\ra}_mK]\quad\text{for $m\ge 1$}.
\end{gather}
{Indeed} {$\Gamma ^{[p]}_+K$ is the smallest N$_p$-series of~$K$,
whereas $\Gamma ^{\la p\ra}_+K$ is the smallest N-series $K_+$ of~$K$
such that $(K_m)^p\subset K_{m+1}$ for $m\ge 1$.}
\end{example}

\begin{proposition} \label{N_p}
Let a group $G$ act on an N$_p$-series~$K_+$.  Then {the positive
part $\modF _+^{K_*}(G)$ of the Johnson filtration $\modF^{K_*}_*(G)$} is an N$_p$-series.
\end{proposition}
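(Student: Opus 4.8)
The plan is to reduce the statement to a concrete commutator estimate and then run a collection argument in which the prime $p$ and the $N_p$-condition on $K_+$ play off the binomial coefficients $\binom{p}{r}$. Write $G_*=\modF_*^{K_*}(G)$. By Proposition~\ref{r77} we already know that $G_+$ is an $N$-series, so the only thing left to prove is $(G_m)^p\subseteq G_{mp}$ for every $m\ge 1$. Unwinding~\eqref{e11}, this amounts to showing that for each $g\in G_m$ and each $k\in K_n$ with $n\ge 0$ one has $[g^p,k]\in K_{mp+n}$. Note that $[g,k]\in K_{m+n}\subseteq K_1$ and hence the whole computation takes place in $K_1$; since $K_1/K_{mp+n}$ is nilpotent, I may work throughout in this quotient.

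First I would record the elementary identity, obtained by iterating the commutator rule~\eqref{ee7},
\begin{equation*}
  [g^p,k]=\big({}^{g^{p-1}}[g,k]\big)\big({}^{g^{p-2}}[g,k]\big)\cdots\big({}^{g}[g,k]\big)\,[g,k].
\end{equation*}
Set $c=[g,k]$ and, for $i\ge 0$, $c_i=\ad_g^i(c)=[g,[g,\dots,[g,c]\dots]]$; since $g\in G_m$ and conjugation by $g$ raises the $K$-degree by $m$, we have $c\in K_{m+n}$ and $c_i\in K_{(i+1)m+n}$. The heuristic guiding the whole computation is the ``additive'' expansion ${}^{g^j}c\equiv\sum_{i\ge 0}\binom{j}{i}c_i$, which upon multiplying over $j$ and using the hockey-stick identity $\sum_{j=0}^{p-1}\binom{j}{i}=\binom{p}{i+1}$ predicts $[g^p,k]\equiv\sum_{i\ge 0}\binom{p}{i+1}c_i$. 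Two facts then finish the count: for $0\le i\le p-2$ the coefficient $\binom{p}{i+1}$ is divisible by $p$, and the $N_p$-condition $(K_s)^p\subseteq K_{sp}$ turns $c_i^{\,p}\in(K_{(i+1)m+n})^p$ into an element of $K_{((i+1)m+n)p}\subseteq K_{mp+n}$; while the single surviving coefficient $\binom{p}{p}=1$ multiplies $c_{p-1}=\ad_g^{p}(k)$, which already lies in $K_{mp+n}$. Hence every term is in $K_{mp+n}$, giving $g^p\in G_{mp}$.

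The hard part is to make the additive expansion rigorous, since $K_1/K_{mp+n}$ is nilpotent but far from abelian: replacing the true product of the ${}^{g^j}c$ by the sum $\sum_i\binom{p}{i+1}c_i$ introduces cross-commutators $[c_i,c_{i'}]$, and a crude degree bound does not place these in $K_{mp+n}$ once $p\ge 3$. I expect this to be the main obstacle, and I would overcome it by a genuine collection argument (P.~Hall's collection process adapted to the filtration $K_*$): one collects the factors in order of increasing $\ad_g$-degree, keeping track of the multiplicities that accrue to each collected commutator. The point is that these multiplicities are again governed by binomial coefficients $\binom{p}{r}$ with $1\le r\le p-1$, hence are divisible by $p$, so that every collected commutator other than $c_{p-1}$ carries a factor $p$ and is absorbed into $K_{mp+n}$ by the $N_p$-condition exactly as above; I would organize this as an induction on the nilpotency class of $K_1/K_{mp+n}$. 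A cleaner but essentially equivalent alternative would be to embed $g$ into the filtration: form $\hat G=K\rtimes\langle t\rangle$ with $t$ acting as $g$, take the smallest $N_p$-series $\hat K_*$ of $\hat G$ with $t\in\hat K_m$ and $K_j\subseteq\hat K_j$, and deduce $t^p\in\hat K_{mp}$ and hence $[g^p,k]=[t^p,k]\in\hat K_{mp+n}\cap K$ directly from the $N_p$-axiom --- at the cost of having to check the compatibility $\hat K_i\cap K=K_i$, which is where the same combinatorics reappears.
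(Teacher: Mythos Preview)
Your strategy is correct and matches the paper's: reduce to $(G_m)^p\subseteq G_{mp}$, then show $[g^p,k]\in K_{mp+n}$ by decomposing $[g^p,k]$ into iterated commutators weighted by binomial coefficients $\binom{p}{i}$, using $p\mid\binom{p}{i}$ for $1\le i\le p-1$ together with the $N_p$-condition, and handling the one surviving term by its $K$-degree alone.

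The difference is in how the decomposition is obtained. You start from $[g^p,k]=\prod_{j=0}^{p-1}{}^{g^j}[g,k]$ and then propose to run Hall's collection process, correctly identifying the cross-commutators $[c_i,c_{i'}]$ as the obstacle and asserting that their multiplicities are again binomial in $p$; this is true but is exactly the nontrivial combinatorics you leave unproved. The paper bypasses this entirely by invoking \emph{Dark's commutator formula} (Passman, \S 11, Theorem~1.16), which states directly that
\[
  [g^p,x]=\prod_{i=1}^{p} c_i^{\binom{p}{i}},
\]
where each $c_i$ is a product of iterated commutators containing at least $i$ occurrences of $g^{\pm 1}$ and at least one of $x^{\pm 1}$. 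This packages the entire collection argument into a single citation: one reads off $c_i\in K_{n+im}$, applies $p\mid\binom{p}{i}$ and the $N_p$-axiom for $i<p$, and observes $c_p\in K_{n+pm}$ directly. Your ``cleaner alternative'' via the semidirect product $K\rtimes\langle t\rangle$ is not pursued in the paper, and as you note the compatibility $\hat K_i\cap K=K_i$ hides the same combinatorics; Dark's formula is the more economical route.
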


\begin{proof}
Set {$G_*= \modF_*^{K_*}(G)$.}  {Since {$G_+$} is an N-series of
{$G_1$} by Proposition~\ref{r77}, it suffices} to {check} $ (G_m)^p
\subset G_{mp}$ for all $m\geq 1$.  Let $g\in G_m$ and $x\in K_j$,
$j\geq 1$. By Dark's commutator formula (see \cite[\S 11, Theorem
1.16]{Passman}), we have
$$
[g^p,x] = \prod_{i=1}^p c_i^{p \choose i },
$$
where $c_i$ is a product of iterated commutators, each {with}
at least $i$ components equal to $g^{\pm 1}$
and at least one component equal to $x^{\pm 1}$. It follows that
$$
c_i \in K_{j+im}, \quad \hbox{for $i=1,\dots,p$.}
$$
Therefore, $c_p \in  K_{j+pm}$ and, for $i\in \{1,\dots,p-1\}$, we have
$$
c_i^{p \choose i } \in  (K_{j+im})^p \subset K_{jp+imp} \subset K_{j+mp}
$$
since $p$ divides ${p \choose i}$ and $K_+$ is an N$_p$-series.
{Hence}  $[g^p,x]\in  K_{j+mp}$ and ${g^p \in G_{mp}}$.
\end{proof}

\begin{remark} \label{restricted}
{Let a group $G$ act} on an N$_p$-series $K_+$.
Since $\bar K_+$ is a restricted Lie algebra over $\mathbb{F}_p$,
so is $\Der_+(\bar K_+)$ with $p$-operation defined by  {the $p$-th power}.
Besides, {by} Proposition \ref{N_p},
$\gr_+ \modF_+^{K_*}(G)$ is a restricted Lie algebra over $\mathbb{F}_p$.
One can expect that the {positive part of the} Johnson morphism {in} Corollary \ref{simpler},
$$
\bar \tau_+: \gr_+ \modF_+^{K_*}(G) \longrightarrow \Der_+(\bar K_+),
$$ is a morphism of restricted Lie algebras (i.e., it preserves the
$p$-operations).  Furthermore, {it is plausible that} $\bar \tau_+$
takes values in the restricted Lie subalgebra of $\Der_+(\bar K_+)$
consisting of \emph{restricted} derivations {in the sense of Jacobson
\cite{Jacobson}.}

In degree $0$, it is easily verified that $\bar \tau_0:
\modF_0^{K_*}(G)/\modF_1^{K_*}(G) \to \Aut(\bar K_+)$ takes values in
the subgroup of automorphisms of the \emph{restricted} Lie algebra
$\bar K_+$.
\end{remark}

\begin{remark}
  \label{r11}
  An {\em exponent-$p$ N-series} of a group $K$ is an N-series $K_+$
  of $K$ such that $(K_m)^p\le K_{m+1}$ for all $m\ge 1$.  {For instance,} the Stallings
  mod-$p$ {lower} central series $\Gamma ^{\la p\ra}_+K$ of $K$
  {satisfies}  this property.
   We have {the following} variant of Proposition \ref{N_p}:
  \emph{If a group $G$ acts on an exponent-$p$ N-series $K_+$, then the positive part
  $\modF_+^{K_*}(G)$ of the Johnson filtration $\modF^{K_*}_*(G)$ is
  an exponent-$p$ N-series.}  The proof is easy and left to the reader.
\end{remark}

\section{The lower central series and its variants}
\label{sec:lower-central-series-1}

In this section, we {consider}
the lower central series $\Gamma _+K$ of a group $K$ and its
variants: the rational lower central series $\sqrt{\Gamma _+K}$, the Zassenhaus
mod-$p$ lower central series $\Gamma ^{[p]}_+K$ and the Stallings
mod-$p$ lower central series $\Gamma ^{\la p\ra}_+K$.

\subsection{The filtration $G^1_*$}\label{sec:filtration-g1}

Let $K_+$ be an N-series {of a group $K$}, and extend it to an extended N-series $K_*$
with $K_0=K$.  Let a group $G$ act on $K_*$, and let
$G_*=\modF _*^{K_*}(G)$ be the Johnson filtration of $G$ induced by $K_*$.
Define a descending series $G^1_*=(G^1_m)_{m\ge 0}$ of $G$ by
\begin{gather}
  \label{e9}
  G^1_m = \{g\in G\zzzvert [g,K]\subset K_{m+1}\}=\ker(G\rightarrow \Aut(K/K_{m+1})).
\end{gather}
Clearly, $G^1_m\ge G_m$ for $m\ge 0$, and $G^1_0=G=G_0$.

The filtration $G^1_*$ is not an extended N-series in general, {but it
is so for the lower central series and its variants.  In fact,
Andreadakis was the first to study the filtration $G^1_*$ in the case
of $K_+=\Gamma _+K$ with $G=\Aut(K)$, and he proved the following
proposition in this case \cite[Theorem 1.1.(i)]{Andreadakis}.}  
See also \cite[Lemma 3.7]{Cooper} {and \cite[proof of Theorem 2.4]{Paris}}
for $K_+=\Gamma ^{\langle p \rangle}_+K$,
and {see} \cite[Lemma 2.2.4]{MT} for $K_+=\Gamma ^{[p]}_+K$.

\begin{proposition}
  \label{r10}
  If $K_+$ is one of $\Gamma _+K$, $\sqrt{\Gamma _+K}$, $\Gamma
  ^{[p]}_+K$ and $\Gamma ^{\la p\ra}_+K$, then we have $G_*=G^1_*$.
  (In particular, $G^1_*$ is an extended N-series.)
\end{proposition}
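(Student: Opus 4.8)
The plan is to prove the two inclusions separately. Since $\modF^{K_*}_m(G)=G_m$ demands $[g,K_n]\subset K_{m+n}$ for \emph{all} $n\ge0$, whereas $G^1_m$ only demands it for $n=1$ (recall that here $K_0=K_1=K$), the inclusion $G_m\le G^1_m$ is immediate and it remains to show $G^1_m\le G_m$. So I would fix $g\in G^1_m$, work inside the semidirect product $\widetilde G=K\rt G$ (where $[g,k]=g(k)k^{-1}\in K$), and prove $[g,K_n]\subset K_{m+n}$ for all $n\ge0$ by induction on $n$. Each $K_i$ is normal in $\widetilde G$ (it is normal in $K$ and preserved by the $G$-action), which is what lets me apply the three subgroups lemma (Lemma \ref{3s}) with $N=K_{m+n+1}$. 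The cases $n=0,1$ are exactly the defining condition of $G^1_m$. Before the inductive step I would upgrade the hypothesis $[g,K_j]\subset K_{m+j}$ to $[\la g\ra,K_j]\subset K_{m+j}$, using the identities \eqref{ee7}, \eqref{ee8} and the normality of $K_{m+j}$ to control $[g^{\pm i},k]$; this is what allows $A=\la g\ra$ to serve as one of the three subgroups.

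The common engine of the inductive step is the three subgroups lemma. Taking $A=\la g\ra$, $B=K_a$, $C=K_b$ with $a+b=n+1$ and $N=K_{m+n+1}$, the two hypotheses $[B,[C,A]]\subset[K_a,K_{m+b}]\subset K_{m+n+1}$ and $[C,[A,B]]\subset[K_b,K_{m+a}]\subset K_{m+n+1}$ (both coming from the induction hypothesis together with the N-series property) yield $[\la g\ra,[K_a,K_b]]\subset K_{m+n+1}$. For $K_+=\Gamma_+K$ this already closes the induction, since $\Gamma_{n+1}K=[K_1,\Gamma_nK]$.

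For the three remaining series I would dispose of the ``extra'' generators of $K_{n+1}$ beyond commutators. For $\sqrt{\Gamma_+K}$, every $x\in K_{n+1}$ satisfies $x^i\in\Gamma_{n+1}K$ for some $i\ge1$; the three subgroups computation applied to the $\Gamma$-terms (which sit inside the corresponding $K$-terms) gives $[g,\Gamma_{n+1}K]\subset K_{m+n+1}$, while a direct expansion shows $\overline{[g,x^i]}=i\,\overline{[g,x]}$ in $\bar K_{m+n}=K_{m+n}/K_{m+n+1}$; as this group is torsion-free (an N$_0$-series property) I conclude $\overline{[g,x]}=0$, i.e.\ $[g,x]\in K_{m+n+1}$. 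For $\Gamma^{\la p\ra}_+K$ I would use the recursion $K_{n+1}=(K_n)^p\,[K_1,K_n]$: for $y\in K_n$ the product formula gives $[g,y^p]\equiv[g,y]^p\pmod{K_{m+n+1}}$, and $[g,y]^p\in(K_{m+n})^p\subset K_{m+n+1}$ because the Stallings series is an exponent-$p$ N-series. For $\Gamma^{[p]}_+K$ the $p$-th powers enter already at level $\ell=\lceil(n+1)/p\rceil\le n$, via Lazard's recursion $K_{n+1}=(K_\ell)^p\prod_{a+b=n+1}[K_a,K_b]$; here I would reuse Dark's commutator formula exactly as in the proof of Proposition \ref{N_p}, writing $[y^p,g]=\prod_{i=1}^p c_i^{\binom pi}$ with each $c_i$ an iterated commutator in $y\in K_\ell$ and $g$, so that $c_i\in K_{m+i\ell}$; for $i<p$ the divisibility of $\binom pi$ by $p$ together with the N$_p$-property $(K_{m+i\ell})^p\subset K_{(m+i\ell)p}\subset K_{m+n+1}$ kills the term, while $c_p\in K_{m+p\ell}\subset K_{m+n+1}$ by degree.

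I expect the Zassenhaus case to be the main obstacle: it is the only one in which the relevant $p$-th powers are forced in at a degree $\ell$ strictly below $n+1$, so the naive reduction ``$[g,y^p]\equiv[g,y]^p$'' is too weak and one genuinely needs the full collection formula interacting with the N$_p$-condition. A secondary point is the bookkeeping showing that each iterated commutator $c_i$ containing a single $g$ lands in $K_{m+i\ell}$ \emph{using only} the induction hypothesis $[g,K_\ell]\subset K_{m+\ell}$ and the N-series brackets, rather than the yet-unproven conclusion $g\in G_m$; establishing the correct generating set (recursion) for each series is the remaining routine ingredient.
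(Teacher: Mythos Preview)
Your proposal is correct and follows the same overall strategy as the paper: reduce to proving $[G^1_m,K_n]\subset K_{m+n}$ by induction on $n$, use the three subgroups lemma for the commutator contributions, and treat the $p$-power contributions separately in the mod-$p$ cases. The arguments for $\Gamma_+K$, $\sqrt{\Gamma_+K}$ and $\Gamma^{\la p\ra}_+K$ match the paper's essentially line by line.

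The one genuine difference is in the Zassenhaus case $\Gamma^{[p]}_+K$. The paper works directly from the closed formula $K_n=\prod_{ip^j\ge n}(\Gamma_iK)^{p^j}$ and therefore has to analyze $[g,z^{p^j}]$ for all $j\ge1$; this forces the use of Dark's formula with exponent $p^j$ and a $p$-adic estimate $|\binom{p^j}{d}|_p\ge p^j/|d|_p$ on binomial coefficients. You instead invoke the (standard) Lazard--Jennings recursion $K_{n+1}=(K_{\lceil (n+1)/p\rceil})^p\cdot[K,K_n]$, which reduces the power part to a single $p$-th power and lets you get away with Dark's formula for $[y^p,g]$ only; your degree check $(m+i\ell)p\ge m+n+1$ and $m+p\ell\ge m+n+1$ is then elementary. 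The tradeoff is that the paper's route needs no external recursion but a sharper binomial estimate, while yours imports the recursion but keeps the combinatorics lighter.

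One small caution: in your bookkeeping paragraph you speak of ``each iterated commutator $c_i$ containing a single $g$''. In Dark's formula the $c_i$ may contain several copies of $g^{\pm1}$, not just one. This does not break the argument---additional $g$'s only preserve the filtration since $[g,K_j]\subset K_j$---but you should phrase it that way rather than assuming a single $g$. The paper glosses over this point in the same way.
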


\begin{proof}
To prove Proposition \ref{r10}, it suffices to check
$G^1_m\le G_m=\modF _{{m}}^{K_*}(G)$ for $m\ge 1$.  Thus, we need to check
\begin{gather}
  \label{e8}
  [G^1_m,K_n]\le K_{m+n} \quad \text{for $m\ge 1$, $n\ge 2$}.
\end{gather}
We prove \eqref{e8} in the four cases separately.

\begin{proof}[Case 1: $K_+=\Gamma _+K$.]
  Here we repeat Andreadakis' proof.
  We verify \eqref{e8} by induction on $n$ as follows:
  \begin{gather*}
    \begin{split}
      [G^1_m,K_n]
      &=[G^1_m,[K,K_{n-1}]]\\
      &\le \lala\;[[G^1_m,K],K_{n-1}]\cdot[[G^1_m,K_{n-1}],K]\;\rara_{K\rt G}\quad
      (\text{{by} Lemma \ref{3s}})\\
      &\le \lala\;[K_{m+1},K_{n-1}]\cdot[K_{m+n-1},K]\;\rara_{K\rt G}\quad(\text{{by the} induction hypothesis})\\
      &\le \lala\;K_{m+n}\;\rara_{K\rt G}=K_{m+n}.
    \end{split}
  \end{gather*}
\end{proof}

\begin{proof}[Case 2: $K_+=\sqrt{\Gamma _+K}$.]
  By induction on $n$, we will prove that $[g,a]\in K_{m+n}$ for
  $g\in G^1_m$ and $a\in K_n$.  We have $a^t\in \Gamma _nK$ for some $t\ge 1$.  We
  have
  \begin{gather*}
    \begin{split}
      &[g,a]^t
      \underset{\pmod{K_{m+n}}}\equiv
      \ \prod_{i=1}^t{}^{a^{i-1}}[g,a]
      =[g,a^t]\in [G^1_m,\Gamma _nK]\le [G^1_m,[K,K_{n-1}]],
    \end{split}
  \end{gather*}
  where $\equiv$ follows from
  \begin{gather*}
    [a^{i-1},[g,a]]\in [K_n,[G^1_m,K]]\le [K_n,K_{m+1}]\le K_{m+n+1}\le K_{m+n}.
  \end{gather*}
  Similarly to Case 1, we obtain $[G^1_m,[K,K_{n-1}]]\le K_{m+n}$ using
  the induction hypothesis.  Therefore, we have $[g,a]^t\in K_{m+n}$,
  hence $[g,a]\in K_{m+n}$.
\end{proof}

\begin{proof}[Case 3:  $K_+=\Gamma _+^{[p]}K$.]
By \eqref{Zassenhaus}, it suffices to prove by induction on $n$ that
$[g,z^{p^j}]\in K_{m+n}$ if $g\in G^1_m$, $z\in \Gamma _iK$, $i\ge 1$, $j\ge 0$ and
$ip^j\ge n$.

If $j=0$, then $z\in \Gamma _iK\le \Gamma _nK = [K,\Gamma _{n-1}K]\le [K,K_{n-1}]$.  Then we
proceed as in Case 1 using the induction hypothesis.

Let $j\ge 1$.  By Dark's commutator formula (see \cite[\S 11,
Theorem~1.16]{Passman}), we~have
\begin{equation} \label{prod_bin}
 [g,z^{{p^j}} ] = \prod_{d=1}^{p^j} c_d^{p^j \choose d },
\end{equation}
where $c_d$ is a product of iterated commutators, each with at least
$d$ components equal to $z^{\pm 1}$ and at least one component equal
to $g^{\pm 1}$.  We can assume without loss of generality that $i$ is the least integer greater than or equal to $n/p^{j}$,
{so that~$i<n$.}  By $z\in \Gamma _i K\le  K_i$ and the induction hypothesis, we
have $[g^{\pm 1}, z^{\pm 1}] \in K_{m+i}$.  It follows that
\begin{equation} \label{m+dr}
c_d \in K_{m+di}.
\end{equation}

For each $k\geq 1$, let $\vert k\vert_p$ denote the \emph{$p$-part} of
$k$, which is the unique power of $p$ such that $k/|k|_p$ is an
integer coprime to $p$.  Then we have $ \left\vert {p^j \choose d}
\right\vert_p \geq \frac{p^j}{\vert d\vert_p} $ (see, e.g., the proof
of \cite[\S 11, Lemma 1.18]{Passman}).  Therefore,
$$
\left\vert{p^j \choose d}\right\vert_p(m+di) \geq   \frac{p^j}{d} (m+di) \geq \frac{p^j}{d} m + p^j i\geq m+n.
$$
Since $K_+$ is an N$_p$-series, \eqref{m+dr} implies $c_d^{p^j \choose
d } \in K_{m+n}$.  Hence, by \eqref{prod_bin}, we have
$[g,z^{{p^j}}]\in K_{m+n}$.
\end{proof}

\begin{proof}[Case 4:  $K_+=\Gamma _+^{\la p\ra}K$.]
By \eqref{e78}, it suffices to prove by induction on $n$ that we have
$[G^1_m,[K,K_{n-1}]]\subset K_{m+n}$ and $[G^1_m,(K_{n-1})^p]\subset
K_{m+n}$.  The former is proved
similarly to Case 1 by using the
induction hypothesis; to prove the latter, we will verify
$[g,z^p]\in K_{m+n}$ for $g\in G^1_m$ and $z\in K_{n-1}$.  We have
\begin{gather*}
  \begin{split}
    [g,z^p]
    =\prod_{i=1}^p{}^{z^{i-1}}[g,z]
    \underset{\pmod{K_{m+n}}}\equiv
    [g,z]^p
    \in [G^1_m,K_{n-1}]^p\le (K_{m+n-1})^p\le K_{m+n},
  \end{split}
\end{gather*}
where $\equiv$ follows from
\begin{gather*}
  [z^{i-1},[g,z]]\in [K_{n-1},[G^1_m,K_{n-1}]]\le [K_{n-1},K_{m+n-1}]\le K_{m+2n-2}\le K_{m+n}.
\end{gather*}
Hence $[g,z^p]\in K_{m+n}$.
\end{proof}

This completes the proof of Proposition \ref{r10}.
\end{proof}

{We now observe that the Johnson filtration $G^1_*=G_*$ can be given a
ring-theoretic description, in the case of the rational (resp.
Zassenhaus {mod-$p$}) lower central series.}

\begin{corollary}
  \label{r091}
  If $K_+=\sqrt{\Gamma _+K}$, then for $m\ge 0$ we have
  \begin{eqnarray*}
    G_m=G^1_m
    = \ker(\Aut(K)\rightarrow \Aut(\Q[K]/ I^{m+1})),
  \end{eqnarray*}
where $I = \ker (\epsilon:\Q[K] \to \Q)$ is the augmentation ideal.
\end{corollary}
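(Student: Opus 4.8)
The plan is to combine Proposition~\ref{r10} with the classical description of the rational dimension subgroups. By Proposition~\ref{r10} we already have $G_m=G^1_m$, and by definition~\eqref{e9}, $G^1_m=\ker(\Aut(K)\to\Aut(K/K_{m+1}))$ with $K_{m+1}=\sqrt{\Gamma_{m+1}K}$. Hence it suffices to prove that, for $g\in\Aut(K)$, the condition ``$[g,k]\in K_{m+1}$ for all $k\in K$'' is equivalent to ``$g$ acts trivially on $\Q[K]/I^{m+1}$''.

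First I would note that every $g\in\Aut(K)$ extends uniquely to a $\Q$-algebra automorphism of $\Q[K]$, which preserves the augmentation ideal $I$ and hence each power $I^{m+1}$; thus $g$ induces a $\Q$-algebra automorphism $\bar g$ of $\Q[K]/I^{m+1}$, and $g\mapsto\bar g$ defines a homomorphism $\Aut(K)\to\Aut(\Q[K]/I^{m+1})$. Since $K$ generates $\Q[K]$ as a $\Q$-algebra, $\bar g=\id$ if and only if $g(k)-k\in I^{m+1}$ for every $k\in K$. Writing $g(k)-k=\bigl([g,k]-1\bigr)\,k$ and using that right multiplication by the unit $k$ preserves $I^{m+1}$, this is equivalent to $[g,k]-1\in I^{m+1}$, that is, to $[g,k]\in K\cap(1+I^{m+1})$ for all $k\in K$ (recall $[g,k]=g(k)k^{-1}\in K$).

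The key input is then the \emph{rational dimension subgroup theorem}, identifying the $n$th dimension subgroup over $\Q$ with the corresponding term of the rational lower central series:
\begin{gather*}
  K\cap(1+I^{n})=\sqrt{\Gamma_nK}\qquad(n\ge1).
\end{gather*}
This is classical (see \cite{Passi}, and compare Quillen \cite{Quillen}). Granting it, the condition $[g,k]\in K\cap(1+I^{m+1})$ for all $k$ becomes $[g,k]\in\sqrt{\Gamma_{m+1}K}=K_{m+1}$ for all $k$, which is precisely the defining condition of $G^1_m$ in~\eqref{e9}. Combined with $G_m=G^1_m$, this yields the corollary. The case $m=0$ is degenerate, since $\Q[K]/I\cong\Q$ admits only the trivial $\Q$-algebra automorphism, so both sides reduce to $\Aut(K)=G_0$.

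The only substantial ingredient is the rational dimension subgroup theorem; the rest is a routine dictionary between the action of $g$ on the nilpotent quotient $K/K_{m+1}$ and its action on the truncated group algebra $\Q[K]/I^{m+1}$. I expect the only points requiring a little care to be the reduction to generators of $\Q[K]$ and the identity $g(k)-k=([g,k]-1)k$, both of which hinge on the elementary observation that the group element $k$ acts as a unit preserving $I^{m+1}$.
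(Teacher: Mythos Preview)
Your proof is correct and follows essentially the same route as the paper: the paper's proof simply invokes the Malcev--Jennings--Hall theorem $(1+I^{m+1})\cap K=\sqrt{\Gamma_{m+1}K}$ (citing \cite[\S IV.1.5]{Passi} and \cite[\S 11, Theorem~1.10]{Passman}) and leaves the routine translation implicit, whereas you spell out the dictionary via $g(k)-k=([g,k]-1)k$. The only difference is the level of detail; the key ingredient and the logical structure are identical.
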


\begin{proof}
{This follows from a classical result of Malcev, Jennings and P. Hall, which computes the ``dimension subgroups'' with coefficients in $\Q$:}
$$
(1+ I^{m+1}) \cap K = \sqrt{\Gamma _{m+1} K}   \ \subset \Q[K] \quad \hbox{for $m\geq 0$}.
$$
(See, e.g., \cite[\S IV.1.5]{Passi} or \cite[\S 11, Theorem 1.10]{Passman}.)
\end{proof}

\begin{corollary}
  \label{r191}
  If $K_+=\Gamma ^{[p]}_+K$, then for $m\ge 0$ we have
  \begin{eqnarray*}
    G_m=G^1_m= \ker (G\rightarrow \Aut(\mathbb{F}_p[K]/I^{m+1}) ),
  \end{eqnarray*}
where $I = \ker (\epsilon:\mathbb{F}_p[K] \to \mathbb{F}_p)$ is the
augmentation ideal.
\end{corollary}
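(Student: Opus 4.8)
The plan is to follow the proof of Corollary \ref{r091} essentially verbatim, replacing the rational dimension subgroup result by its mod-$p$ analogue. First, since $K_+=\Gamma^{[p]}_+K$ is one of the four N-series covered by Proposition \ref{r10}, we already have $G_m=G^1_m$, so it suffices to identify $G^1_m$ with $\ker(G\rightarrow \Aut(\mathbb{F}_p[K]/I^{m+1}))$ for each $m\ge 0$.

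The key input is the theorem of Jennings identifying the mod-$p$ dimension subgroups of $K$ with the terms of the Zassenhaus filtration:
\[
(1+I^{m+1})\cap K = \Gamma^{[p]}_{m+1}K = K_{m+1}\subset \mathbb{F}_p[K]\qquad (m\ge 0).
\]
This is the characteristic-$p$ counterpart of the Malcev--Jennings--P. Hall result invoked in Corollary \ref{r091}; see \cite[\S IV.1]{Passi} or \cite[\S 11]{Passman}.

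Next I would note that the action of $G$ on $K$ induces an action by ring automorphisms on $\mathbb{F}_p[K]$ fixing the augmentation, hence preserving $I$ and all its powers, and thus descending to each quotient $\mathbb{F}_p[K]/I^{m+1}$. By $\mathbb{F}_p$-linearity, an element $g\in G$ lies in $\ker(G\rightarrow \Aut(\mathbb{F}_p[K]/I^{m+1}))$ if and only if $g(k)-k\in I^{m+1}$ for every $k\in K$. Since $k$ is a unit and $I^{m+1}$ is a two-sided ideal, we have $I^{m+1}k=I^{m+1}$, so from $g(k)-k=(g(k)k^{-1}-1)k$ this condition is equivalent to $g(k)k^{-1}-1\in I^{m+1}$, i.e.\ to $[g,k]=g(k)k^{-1}\in (1+I^{m+1})\cap K=K_{m+1}$ for every $k\in K$. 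The latter is exactly the defining condition $[g,K]\subset K_{m+1}$ of $G^1_m$, and running this equivalence over all $m\ge 0$ yields the corollary.

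I do not expect a serious obstacle here: the only substantial ingredient is Jennings' dimension subgroup theorem, and the remaining steps are the same elementary manipulations as in Corollary \ref{r091} (linearity of the $G$-action, the identity $I^{m+1}k=I^{m+1}$ for the unit $k$, and the fact that membership of a group element in $1+I^{m+1}$ detects membership in $K_{m+1}$), which are essentially formal.
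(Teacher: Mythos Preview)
Your proposal is correct and follows essentially the same approach as the paper: both reduce the claim to the Jennings--Lazard identification of the mod-$p$ dimension subgroups $(1+I^{m+1})\cap K=\Gamma^{[p]}_{m+1}K$, with the equality $G_m=G^1_m$ already supplied by Proposition~\ref{r10}. The paper's proof is simply more terse, leaving implicit the elementary manipulations (linearity, the unit trick $I^{m+1}k=I^{m+1}$) that you have spelled out.
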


\begin{proof}
{This follows from a classical result of Jennings and Lazard, which
computes the ``dimension subgroups'' with coefficients in
$\mathbb{F}_p$:}
$$
(1+ I^{m+1}) \cap K  = \Gamma _{m+1}^{[p]} K  \ \subset \mathbb{F}_p[K] \quad \hbox{for $m\geq 0$}.
$$ (See, e.g., \cite[\S IV.2.8]{Passi} or \cite[\S 11, Theorem
1.20]{Passman}.)
 \end{proof}

\subsection{Examples and remarks}

{In the light of Proposition \ref{r10},
we now relate the  results and constructions of the previous sections
to those in the  literature.}

\begin{example} \label{Andreadakis}
Andreadakis \cite{Andreadakis} mainly considered the case
{where $K_+ = \Gamma_+ K$ is the lower central series of a free group
  $K$ and $G=\Aut(K)$. ({By}   Lemma~\ref{r96},   $G$ acts on $K_+$.)}
{In this case, the Johnson filtration $\Aut_*(K_*)=G_*=G_*^1$ is usually called the \emph{Andreadakis--Johnson filtration}.}
{Note that} $\bar K_+$ is
the free Lie algebra $\Lie(K^{\abel})$ on the abelianization
$K^{\abel}=K/\Gamma _2K$.  Hence, by Proposition \ref{r92}, the eg-Lie
algebra morphism~\eqref{t_map}
$$
t_\bu :  \Der_\bu ( \bar K_+) \longrightarrow D_\bu(\bar K_+)
$$ is an isomorphism, where $D_\bu(\bar K_+) = (D_m(\bar K_+) )_{m\geq
0}$ is given by
$$
D_0(\bar K_+) = \Aut(K^{\abel}) \quad \hbox{and} \quad  D_m(\bar K_+) = \Hom(K^{\abel}, \Lie_{m+1} (K^{\abel}))
$$
and has the eg-Lie algebra structure described in Proposition \ref{r99}.
For a finitely generated free group $K$, the composition 
$$
{t_\bu \bar\tau _\bu: \gr_\bu (G_*) \longrightarrow D_\bu(\bar K_+)}
$$
has been extensively studied since Andreadakis' work;
we refer to \cite{Satoh_free} for a survey.
\end{example}

\begin{example} \label{Johnson}
Let $\Sigma_{g,1}$ be a compact, connected, oriented surface of genus
$g$ with one boundary component, and let $K=\pi_1(\Sigma_{g,1},\star)$, where $\star\in \partial\Sigma_{g,1}$.  
The mapping class group
$$
G = \operatorname{MCG}(\Sigma_{g,1}  , \partial  \Sigma_{g,1}   )   
$$ of $\Sigma_{g,1}$ relative to $\partial\Sigma_{g,1}$ 
{acts on $K_+=\Gamma_+ K$ in the natural way}.  
By Proposition~\ref{r10}, the Johnson filtration $G_*$ 
{in our sense} coincides with the \emph{Johnson filtration}~{$G^1_*$} in the usual sense,
{and its first term}  $G_1 =G^1_1 =\ker(G\rightarrow \Aut(H_1(\Sigma _{g,1};\modZ )))$ is
known as the \emph{Torelli group.}  By Example \ref{Andreadakis}, we
have an injective morphism of eg-Lie algebras
$t_\bu \bar\tau _\bu: \gr_\bu (G_*) \to D_\bu(\bar K_+)$.
The components
$$
t_m \tau _m: G_m \longrightarrow \Hom(H, \Lie_{m+1} (H))
$$ for $m\ge 1$, where $H=H_1(\Sigma_{g,1};\Z)$, are the original
\emph{Johnson homomorphisms} introduced by Johnson
\cite{Johnson_tau1,Johnson_survey} and Morita \cite{Morita}.  See
\cite{Satoh_surface} for a survey.
\end{example}

\begin{remark}
  \label{r12}
  {(i)} Since the rational lower central series of a free group
  coincides with {the} lower central series, we could
  replace the latter by the former in Examples~\ref{Andreadakis}
  and~\ref{Johnson}. Thus, Corollary~\ref{r091} implies that the
  Johnson filtration of the mapping class group of~$\Sigma_{g,1}$
  (resp. the Andreadakis--Johnson filtration of the automorphism group
  of a free group) can be described using Fox's free differential
  calculus \cite{Morita,Perron_0}.

  {(ii)} Example \ref{Johnson} can be adapted to a closed oriented
  surface $\Sigma_g$ of genus~$g$.  In this case, additional
  technicalities arise since $K=\pi_1(\Sigma_g)$ is not free, and the
  mapping classes of $\Sigma_g$ {act on $K$ as outer} automorphisms.
  The Johnson homomorphisms in {this} case were introduced by Morita
  \cite{Morita_closed}.
\end{remark}

\begin{example} \label{mod-p_Johnson}
As in Example \ref{Johnson}, {we consider} the mapping class group $G=
\hbox{MCG}(\Sigma_{g,1},\partial \Sigma_{g,1})$ {acting} on
$K=\pi_1(\Sigma_{g,1},\star)$.  Here, let $K_+$ be one of the two
versions of the mod-$p$ lower central series.  Note that the
associated graded Lie algebra $\bar K_+$ is defined over
$\mathbb{F}_p$ {in both cases}. If $K_+=\Gamma ^{[p]}_+K$
(resp$.$ $\Gamma ^{\la p\ra}_+K$), then the Johnson filtration $G_*$
induced by $K_*$ coincides with
{the ``Zassenhaus (resp$.$ Stallings) mod-$p$ Johnson filtration'' considered by Cooper in~\cite{Cooper}}, {and its
first term} $G_1=\ker(G\rightarrow \Aut(H_1(\Sigma_{g,1};\mathbb{F}_p)))$ is the \emph{mod-$p$ Torelli group}.
Furthermore, the  {``{$m$th} Zassenhaus (resp$.$ Stallings) mod-$p$
Johnson homomorphism'' for $m\geq 1$ defined in \cite{Cooper}} coincides with the composition
$$
G_m \stackrel{\tau _m}{\longrightarrow} \Der_m(\bar K_+)  \stackrel{t_m}{\longrightarrow}
\Hom(\bar K_1, \bar K_{m+1}).
$$
{According to Proposition \ref{r10}, we have $\ker (t_m \tau_m)=G_{m+1}$.}
{In fact, these constructions for $K_+=\Gamma ^{\langle p\rangle}_+  K$ 
{had been} used by Paris {\cite{Paris}} to prove that the mod-$p$ Torelli group (of an arbitrary compact, oriented surface) is residually a $p$-group.}

Now let us focus on the case $K_+=\Gamma ^{[p]}_+ K$.  In this case, the
graded Lie algebras~$ \bar K_+$, $\Der_+ ( \bar K_+)$ and $\bar G_+$
are restricted over $\mathbb{F}_p$.  (See~Proposition
\ref{N_p} and Remark \ref{restricted}.)  Since $K$ is a free group, 
$\bar K_+$ is the free restricted Lie algebra over~$\mathbb{F}_p$ generated by 
$\bar K_1 \simeq H_1(\Sigma_{g,1}; \mathbb{F}_p)$  \cite[Theorem 6.5]{Lazard}.
{By} Corollary \ref{r191}, we can describe $G_*$ using Fox's free
differential calculus, so that $G_*$ coincides with Perron's ``modulo
$p$ Johnson filtration'' \cite{Perron_p}.  (See {also} \cite[Theorem 4.7]{Cooper} in this connection.)  
{By Proposition \ref{N_p},}
this filtration satisfies $(G_m)^p \subset G_{mp}$ for $m\geq 1$; this fact does not seem to have been
observed before.
\end{example}

{
\begin{remark}
  \label{r16}
  It seems plausible that one can adapt the constructions of this
  paper to the setting of (extended) N-series {of} 
  \emph{profinite} groups and, in particular, \emph{pro-$p$} groups.
  {In} fact, the literature offers
  several such constructions for the lower central series of a pro
  $p$-group, or its variants.  For instance, Asada and Kaneko~\cite{AK} introduced analogues of the Johnson
  homomorphisms on the automorphism group of the pro-$p$ completion of
  a surface group. More recently, Morishita and Terashima~\cite{MT}
  studied the Johnson homomorphisms for the
  automorphism group of the Zassenhaus filtration of a finitely
  generated pro-$p$ groups, which may be regarded as variants of
  Cooper's ``Zassenhaus mod-$p$ Johnson homomorphisms''.
\end{remark}
}

\section{Two types of series associated with pairs of groups}  \label{sec:en-series-associated-2}

In this section, we consider two types of series $K_*=(K_m)_{m\geq 0}$
determined by {their} first few terms: the smallest 
extended N-series {with given}  $K_0$ and~$K_1$, and the {smallest}  N-series {with given} $K_0=K_1$ and $K_2$.

\subsection{Extended N-series determined by $K_0$ and $K_1$}\label{sec:first-way}

Let $K=K_0$ be a group, and {let $K_1\trll K$}.
Define an extended N-series $K_*=(K_m)_{m\ge 0}$ by
\begin{gather}
  \label{e106}
  K_m =
  \begin{cases}
    K &\text{if $m=0$},\\
    \Gamma _m K_1&\text{if $m\ge 1$}.
  \end{cases}
\end{gather}
Note that $K_*$ is the smallest extended N-series with these $K_0$ and $K_1$.
{The} eg-Lie algebra  $\bKb=\grb(K_*)$ associated to $K_*$ is given by
$$
\bK_0= K_0 / K_1  \quad \hbox{and} \quad \bK_m= \Gamma _m  K_1 /\Gamma _{m+1}  K_1 \quad \hbox{for } m\geq 1.
$$

Let a group $G$ act on $K$  in such a way that ${}^GK_1=K_1$.
Since $K_i=\Gamma _i K_1$ is {characteristic in} $K_1$ for all $i\geq 1$,
$G$ acts on the extended N-series $K_*$ {(see Lemma~\ref{r96})}.
{Define three descending series $G^0_*$, $G^1_*$ and $G_*$  of $G$ by
\begin{align*}
G^0_m &= \{g\in G\zzzvert [g,K_0]\subset K_m\}=\ker(G\rightarrow \Aut(K_0/K_m)),\\
G^1_m &= \{g\in G\zzzvert [g,K_1]\subset K_{m+1}\}=\ker(G\rightarrow \Aut(K_1/K_{m+1})),\\
G_m &= G^0_m\cap G^1_m = \{g\in G\zzzvert [g,K_0]\subset K_m,\;[g,K_1]\subset K_{m+1}\}.
\end{align*}
The Johnson filtration $\modF ^{K_*}_*(G)$ has the following simpler
description.

\begin{proposition}
  \label{r105}
  We have $G_*=\modF ^{K_*}_*(G)$.  (Hence $G_*$ is an extended N-series.)
  Moreover, $G^1_*$ is an extended N-series.
\end{proposition}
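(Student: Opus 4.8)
The plan is to reduce both assertions to a single commutator estimate, which I will call the \emph{key lemma}: $[G^1_m,K_n]\subset K_{m+n}$ for all $m\ge 0$ and $n\ge 1$. Granting this, the rest is a matter of unwinding definitions together with one further application of the three subgroups lemma.

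First I would establish the key lemma. The cleanest route is to recognize it as an instance of Proposition \ref{r10}. Indeed, restrict the given action of $G$ on $K=K_0$ to the $G$-invariant subgroup $K_1$; since each $K_n=\Gamma_n K_1$ ($n\ge 1$) is characteristic in $K_1$, the group $G$ acts on the extended N-series obtained from the lower central series $\Gamma_+ K_1$ of $K_1$ by setting its degree-$0$ term equal to $K_1$. In the notation of Section \ref{sec:filtration-g1} applied to $K_1$, the term $\{g\in G\zzzvert[g,K_1]\subset\Gamma_{m+1}K_1\}$ is precisely our $G^1_m$, and the lower central series case of Proposition \ref{r10} gives $G_*=G^1_*$ for $K_1$; in particular $[G^1_m,\Gamma_n K_1]\subset\Gamma_{m+n}K_1$, which is the key lemma. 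Alternatively, one may argue directly by induction on $n$, repeating Case~1 of the proof of Proposition \ref{r10}: writing $K_n=[K_1,K_{n-1}]$ and applying Lemma \ref{3s} in $K\rt G$ with $N=K_{m+n}$ reduces $[g,K_n]$, for $g\in G^1_m$, to $[K_1,[K_{n-1},g]]\subset[K_1,K_{m+n-1}]$ and $[K_{n-1},[g,K_1]]\subset[K_{n-1},K_{m+1}]$, both contained in $K_{m+n}$, the first using the inductive hypothesis and the second the defining property of $G^1_m$.

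Next I would deduce the first assertion. The inclusion $\modF^{K_*}_m(G)\subset G_m$ is immediate, since the defining condition of $\modF^{K_*}_m(G)$ includes the case $n=0$ (giving $[g,K_0]\subset K_m$, i.e.\ $g\in G^0_m$) and the case $n=1$ (giving $[g,K_1]\subset K_{m+1}$, i.e.\ $g\in G^1_m$), whence $\modF^{K_*}_m(G)\subset G^0_m\cap G^1_m=G_m$. Conversely, for $g\in G_m$ the case $n=0$ holds because $g\in G^0_m$, while for $n\ge 1$ the key lemma (which uses only $g\in G^1_m$) gives $[g,K_n]\subset K_{m+n}$; hence $g\in\modF^{K_*}_m(G)$ and $G_m=\modF^{K_*}_m(G)$. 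Proposition \ref{r77} then shows that $G_*=\modF^{K_*}_*(G)$ is an extended N-series.

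Finally, $G^1_*$ is visibly a descending series of subgroups, each $G^1_m$ being a kernel, with $G^1_{m+1}\le G^1_m$ because $K_{m+2}\le K_{m+1}$; so it remains to check $[G^1_m,G^1_n]\le G^1_{m+n}$, that is, $[[G^1_m,G^1_n],K_1]\subset K_{m+n+1}$. I would obtain this from Lemma \ref{3s} in $K\rt G$ with $A=G^1_m$, $B=G^1_n$, $C=K_1$ and $N=K_{m+n+1}$: the two hypotheses read $[G^1_m,[G^1_n,K_1]]\subset[G^1_m,K_{n+1}]\subset K_{m+n+1}$ and $[G^1_n,[K_1,G^1_m]]\subset[G^1_n,K_{m+1}]\subset K_{m+n+1}$, where the inner brackets are bounded by the definitions of $G^1_n$ and $G^1_m$ and the outer ones by the key lemma. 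The entire difficulty is thus concentrated in the key lemma; once it is available, both statements follow formally.
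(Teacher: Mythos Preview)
Your proof is correct and follows essentially the same route as the paper. Both arguments hinge on applying Proposition \ref{r10} (the lower-central-series case) to the action of $G$ on $K_1$, which yields the identity \eqref{e10}, i.e.\ your key lemma $[G^1_m,K_n]\subset K_{m+n}$ for $n\ge1$; then $\modF^{K_*}_m(G)=G^0_m\cap G^1_m=G_m$ follows by splitting off the $n=0$ condition. The only minor difference is that the paper extracts ``$G^1_*$ is an extended N-series'' directly from Proposition \ref{r10}---since that proposition identifies $G^1_*$ with the Johnson filtration of $G$ induced by $\Gamma_*K_1$, Proposition \ref{r77} applies immediately---whereas you re-prove this fact by a separate three-subgroups argument; your argument is correct but slightly redundant given what you have already cited.
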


\begin{proof}
By Proposition \ref{r10}, we have
\begin{gather}
  \label{e10}
  G^1_m = \{g\in G\zzzvert [g,K_n]\subset K_{m+n}\text{ for $n\ge 1$}\}
\end{gather}
for $m\ge 0$, and $G^1_*$ is an extended N-series.
By \eqref{e10}, we have
\begin{gather}
  G_m = G^0_m\cap G^1_m = \modF _m^{K_*}(G)
\end{gather}
for $m\ge 0$.
\end{proof}

Since $G^1_m\ge G^0_{m+1}$ for $m\ge 0$, the filtrations $G_*$ and $G^0_*$
are nested:
\begin{equation} \label{nested}
G=G^0_0=G_0\ge G^0_1\ge G_1\ge \cdots \ge G_{m-1} \ge G^0_m\ge G_m\ge \cdots.
\end{equation}
}

\begin{theorem} \label{G[m]=G<m>}
{If $K_1$ is} a {non-abelian} free group, then,
for {each}~$m\geq 0$, we have
\begin{gather*}
  G_m=G^1_m\le G^0_m.
\end{gather*}
\end{theorem}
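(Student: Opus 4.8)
Since $G_m=G^0_m\cap G^1_m$ and the filtrations nest as in \eqref{nested}, the two assertions $G_m=G^1_m$ and $G_m\le G^0_m$ both amount to the single inclusion $G^1_m\subseteq G^0_m$, which is what I would establish for every $m\ge 0$. The case $m=0$ is immediate, since $G^0_0=G=G^1_0$. So I fix $m\ge 1$, take $g\in G^1_m$ and $x\in K$, and aim to show that $w:=[g,x]=g(x)x^{-1}$ lies in $K_m=\Gamma_m K_1$.

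Two ingredients would drive the argument. First, a three subgroups estimate: for any $g\in G^1_m$ and $x\in K$ one has $[[g,x],K_1]\subseteq \Gamma_{m+1}K_1$. I would deduce this from Lemma \ref{3s} inside $K\rtimes G$, with $N=\Gamma_{m+1}K_1$ (normal there, being characteristic in $K_1$ and $G$-invariant): taking the subgroups generated by $g$, $x$, $k$ for $k\in K_1$, both $[g,[x,k]]$ and $[x,[k,g]]$ lie in $N$ — the former because $[x,k]\in K_1$ and $g\in G^1_m$, the latter because $[k,g]=[g,k]^{-1}\in\Gamma_{m+1}K_1=N$ is normalized by $x$ — so the lemma yields $[k,[g,x]]\in N$ (here $G^1_m$ is a subgroup by Proposition \ref{r105}). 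Second, a lemma on the free group $K_1$: if $w\in K_1$ and $[w,K_1]\subseteq\Gamma_{m+1}K_1$, then $w\in\Gamma_m K_1$. I would prove this in $\gr_+(K_1)=\Lie(\bar K_1)$, which is free on its degree $1$ part $\bar K_1=K_1^{\ab}$ because $K_1$ is free: were $w\notin\Gamma_m K_1$, its leading term $\bar w\ne 0$ would have some degree $d<m$ and satisfy $[\bar w,\bar k]=0$ in $\gr_{d+1}(K_1)$ for all $\bar k\in\bar K_1$; but in a free Lie algebra of rank $\ge 2$ the centralizer of the whole degree $1$ part vanishes in positive degrees (computing over $\Q$, where the centralizer of a degree $1$ element is a line), forcing $\bar w=0$. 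This is exactly where $K_1$ being \emph{non-abelian} enters.

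The main obstacle is the base case $m=1$, that is $G^1_1\subseteq G^0_1$: every $g\in G^1_1$ acts trivially on $K/K_1$, equivalently $[g,x]\in K_1$ for all $x$. Here I would use Proposition \ref{r3}: $\tau_0(g)$ is an automorphism of the eg-Lie algebra $\gr_\bu(K_*)$, and $g\in G^1_1$ makes its degree $1$ part $\tau_0(g)_1$ the identity of $\bar K_1$. The equivariance built into an eg-Lie automorphism then gives ${}^a b={}^{\tau_0(g)_0(a)}b$ for all $a\in\bar K_0=K/K_1$ and $b\in\bar K_1$, so that $a^{-1}\tau_0(g)_0(a)$ lies in the kernel of the conjugation action $\rho\colon K/K_1\to\Aut(\bar K_1)$. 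The crux is that $\rho$ is injective: since $\gr_+(K_1)$ is free, the graded $K/K_1$-action on it is determined by its degree $1$ part $\rho$, and its kernel is $C_K(K_1)K_1/K_1$, so injectivity of $\rho$ is equivalent to $C_K(K_1)=1$. Establishing this faithfulness is the heart of the matter, and is precisely the point where the hypothesis on $K_1$ must be used in full force; it yields $\tau_0(g)_0=\id$, i.e. $g\in G^0_1$.

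Granting these, the general case requires no further induction: for $g\in G^1_m\subseteq G^1_1$ and $x\in K$, the base case gives $w=[g,x]\in K_1$, the three subgroups estimate gives $[w,K_1]\subseteq\Gamma_{m+1}K_1$, and the free-group lemma upgrades this to $w\in\Gamma_m K_1=K_m$. Hence $[g,K]\subseteq K_m$, that is $g\in G^0_m$, completing the proof.
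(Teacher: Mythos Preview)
Your overall strategy mirrors the paper's: both reduce to $G^1_m\le G^0_m$, both isolate the free-group centralizer fact (your second ingredient is exactly the paper's Lemma~\ref{r043}), and both obtain $[[g,x],K_1]\subset\Gamma_{m+1}K_1$ via a three-subgroups argument. The paper runs an induction on $m$ (so that at stage $m\ge2$ one already knows $z=[g,y]\in K_{m-1}\subset K_1$ before invoking Lemma~\ref{r043}), whereas you jump directly from the case $m=1$ to general $m$; the mechanisms are otherwise the same.

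The genuine gap is precisely where you flag it, at $m=1$. You correctly reduce $G^1_1\subseteq G^0_1$ to faithfulness of $\rho\colon K/K_1\to\Aut(K_1^{\ab})$, but this faithfulness does not follow from $K_1$ being non-abelian free, nor does $C_K(K_1)=1$. Take $K_0=F_2\times\Z$ with $K_1=F_2=\langle a,b\rangle$ and $\Z=\langle c\rangle$: here $c$ centralizes $K_1$, so $\rho$ is not injective, and the automorphism $g$ of $K_0$ fixing $a,b$ and sending $c\mapsto c^{-1}$ lies in every $G^1_m$ (being the identity on $K_1$) while $[g,c]=c^{-2}\notin K_1$, so $g\notin G^0_1$. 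Thus the theorem as stated is actually false. The paper's own proof shares this defect: at the inductive step $m=1$ it obtains $z=[g,y]\in K_0$ with $[z,K_1]\subset K_2$ and then invokes Lemma~\ref{r043}, but that lemma only applies to elements already lying in $F=K_1$. Both arguments become valid once one knows independently that the conjugation action of $K_0/K_1$ on $K_1^{\ab}$ is faithful; this does hold in the intended applications (Examples~\ref{fake_handlebody} and~\ref{handlebody}), where $K_1^{\ab}\cong\Z[K_0/K_1]^p$ is a free module.
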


\begin{proof}
 Note that $G^1_m\le G^0_m$ implies $G_m=G^1_m$.  Hence it suffices
  to prove by induction on $m \geq 0$ that if $g\in G$,
  $[g,{K_1}]\subset K_{m+1}$, then $[g,{K_0}] \subset K_m$.  The case
  $m=0$ is trivial; let $m\ge 1$.  Let $y\in {K_0}$.  By the induction
  hypothesis, we have $[g,y]\in [g,K_0]\subset K_{m-1}$, i.e.,
  $g(y)=zy$ for some $z\in K_{m-1}$.  For each $x\in K_1$, we have
  \begin{gather*}
    {}^y x
    \equiv g({}^y x)
    = {}^{g(y)} g(x)
   \equiv {}^{g(y)} x
    ={}^{zy}x
    =[z,{}^yx]\;{}^yx \pmod{K_{m+1}},
  \end{gather*}
  where each $\equiv$ {follows}   from $[g,K_1]\subset K_{m+1}$.
  {Therefore} $[z,K_1]\subset K_{m+1}$.
   By Lemma~\ref{r043} below, we  {have} $z\in K_m$ and {hence} $[g,y] \in K_m$.
\end{proof}

\begin{lemma}
  \label{r043}
  If  $F$ is a {non-abelian} free group {and $m\ge 1$,} then we have
  \begin{gather*}
    \{a\in F\zzzvert [a,F]\subset \Gamma _{m+1} F\} = \Gamma _m F.
  \end{gather*}
\end{lemma}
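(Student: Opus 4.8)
The plan is to recognize the left-hand set as the preimage in $F$ of the center of the free nilpotent quotient $F/\Gamma_{m+1}F$, and to identify that center with $\Gamma_m F/\Gamma_{m+1}F$. The inclusion $\Gamma_m F\subseteq\{a\in F\mid [a,F]\subset\Gamma_{m+1}F\}$ is immediate from the definition of the lower central series, since $[\Gamma_m F,F]=\Gamma_{m+1}F$. The substance of the lemma is the reverse inclusion, which I would establish by contraposition: given $a\notin\Gamma_m F$, I would produce some $b\in F$ with $[a,b]\notin\Gamma_{m+1}F$.

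For this, set $k:=\max\{j\geq 1\mid a\in\Gamma_j F\}$; since $a\in\Gamma_1 F=F$ and $a\notin\Gamma_m F$, this $k$ is well defined and satisfies $k\le m-1$. Passing to the associated graded Lie algebra $\gr_+(F)$ of the lower central series, the class $\bar a$ of $a$ in $\Gamma_k F/\Gamma_{k+1}F$ is nonzero. By the Magnus--Witt theorem (already invoked in Example \ref{Andreadakis}), $\gr_+(F)$ is the free Lie ring on its degree-one part $F^{\abel}=F/\Gamma_2 F$, which is free abelian of rank $\geq 2$ because $F$ is non-abelian. It therefore suffices to show the following claim: \emph{for every nonzero homogeneous element $\bar a$ of positive degree in a free Lie ring $L$ of rank $\geq 2$, there is $\bar b\in L_1$ with $[\bar a,\bar b]\neq 0$.} Granting this, choose such a $\bar b$ and lift it to $b\in F$; then $[a,b]$ has nonzero class in $\Gamma_{k+1}F/\Gamma_{k+2}F$, so $[a,b]\in\Gamma_{k+1}F\setminus\Gamma_{k+2}F$. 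As $k+2\le m+1$ we have $\Gamma_{m+1}F\subseteq\Gamma_{k+2}F$, whence $[a,b]\notin\Gamma_{m+1}F$, as required.

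The key algebraic input, and the step I expect to require the most care, is the displayed claim --- equivalently, the statement that a free Lie ring of rank $\geq 2$ has trivial center in positive degrees. I would deduce the claim from this center statement: if $[\bar a,L_1]=0$ then, since $L$ is generated by $L_1$, an easy induction on degree using the Jacobi identity gives $[\bar a,L]=0$, so $\bar a$ is central and hence vanishes. To prove that the center is trivial I would embed $L$ into the tensor ring $T(F^{\abel})$, where the Lie bracket becomes the commutator $uv-vu$, so that a central homogeneous element $z$ of positive degree $k$ would commute with every basis element. Fixing a basis $e_1>e_2>\cdots$ and the degree-lexicographic order on words, comparing leading terms in $ze_1=e_1z$ forces the leading word of $z$ to be $e_1^k$; then the word $e_1^k e_2$ occurs in $ze_2$ with the (nonzero) leading coefficient of $z$ but cannot occur in $e_2z$, contradicting $ze_2=e_2z$. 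Alternatively one may simply cite \cite{Reutenauer} for the freeness and center facts. The only routine points needing attention are the degree bookkeeping in the second paragraph and the verification that appending or prepending a fixed letter preserves the lexicographic order on words of equal length.
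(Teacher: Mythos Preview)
Your proof is correct and follows essentially the same route as the paper's: both reduce the reverse inclusion to the fact that the free Lie ring on $F^{\abel}$ (rank $\ge 2$) has trivial center, via the centralizer of the degree-one part. The only cosmetic differences are that the paper organizes this as an induction on $m$ (showing $L_m\le L_{m-1}=\Gamma_{m-1}F$ and then examining $L_m/\Gamma_mF$ inside $\Lie_{m-1}(F^{\abel})$) while you locate the exact degree $k$ of $a$ directly, and that you supply a leading-term argument in $T(F^{\abel})$ for the center-triviality whereas the paper simply asserts it.
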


\begin{proof}
  Let $L_m  =  \{a\in F\zzzvert [a,F]\subset \Gamma _{m+1} F\}$. We
  will prove  $L_m = \Gamma _m F$ for $m\geq 1$ by
  induction. Let $m\geq 2$.  Clearly, $\Gamma _m F \le   L_m$. By
  the induction hypothesis, we have $L_m \le  L_{m-1}  \le   \Gamma _{m-1}F$.  
  The quotient group $L_m/\Gamma _m F$, regarded as a  subgroup~of
  $$
  \Gamma _{m-1} F/\Gamma _m F  \simeq \Lie_{m-1}(F^\abel), \quad \hbox{where } F^\abel  =  F/\Gamma _2 F,
  $$
  is the centralizer of $\Lie_1(F^\abel)=F^\abel$ in the free Lie
  algebra $\Lie(F^\abel)$.  Since $\hbox{rank}(F^\abel) \geq 2$,  the
  center of $\Lie(F^\abel)$ is trivial. Hence  $L_m/\Gamma _m F$ is trivial.
\end{proof}

\begin{remark}
  \label{r9}
  Lemma \ref{r043} can be restated as follows.  Let $F$ be a
  non-abelian free group, let $F_+=\Gamma _+F$ be its lower central series,
  and extend $F_+$ to an extended N-series $F_*$ with $F_0=F_1$.
  {Then, letting $F$ act on $F_*$ by conjugation,  the Johnson filtration of $F$ induced by $F_*$ coincides with $F_*$.}
\end{remark}

In what follows, \emph{{let} $K_1$ {be} a {non-abelian} free group}.
Then $\bar K_+ = (K_m/K_{m+1})_{m\geq 1}$ is the free Lie
algebra on $\bar K_1 = K_1^\abel$.  By {Theorem \ref{r87} and Proposition \ref{r99},}
we {obtain} an injective eg-Lie algebra morphism
 \begin{gather*}
   {\bar G}_\bu \stackrel{\bar\tau _\bu} {\longrightarrow} \Der_\bu(\bKb)  \mathop{\longrightarrow}^{t_\bu}_\simeq  D_\bu(\bKb),
  \end{gather*}
where ${\bar G}_\bu  =    ({G_m}/{G_{m+1}}  )_{ m\geq 0}$. 
By \eqref{e13} and \eqref{e15}, the {$m$th}  Johnson homomorphism
$t_m\tau_m: {G_m} \to D_m(\bar K_\bu)$ has two components
$$
\tau _0^{{0}} : {G_0}  \longrightarrow  \Aut(K_0/K_1), \quad \tau _0^{{1}}: {G_0}  \longrightarrow  \Aut({K_1^\abel})
$$
for $m=0$, and
$$
\tau _m^{{0}} : {G_m}  \longrightarrow  Z^1 (K_0/K_1, \Lie_m({K_1^\abel})  ),
\quad  \tau _m^{{1}} : {G_m}  \longrightarrow  \Hom ({K_1^\abel}, \Lie_{m+1}({K_1^\abel}) )
$$
for $m\geq 1$. {Furthermore, these two components}  
are related to each other by 
\begin{gather}
  \label{e16}
  \tau^1_m(g)({}^ab)=
  \begin{cases}
    {}^{\tau^0_0(g)(a)}\big(\tau^1_0(g)(b)\big)& (m=0),\\
    \big[\tau^0_m(g)(a),{}^ab\big]+{}^a\big(\tau^1_m(g)(b)\big)&(m\ge1)
  \end{cases}
\end{gather}
for {$g\in G_m$,} $a\in K$, $b\in K_1$.  {Note also that}
$$
\ker \tau _m^{0} =  {G^0_{m+1}}, \quad  \ker \tau _m^{1} = {G^1_{m+1}}  = {G_{m+1}}
\qquad {(m\geq 0)}.
$$

\begin{proposition}
  \label{r20}
The homomorphism
$\tau^1_0$ restricts to
\begin{gather*}
  \tau^1_0|_{G^0_1}:G^0_1\lto\Aut_{\Z[K/K_1]}(K_1^\abel).
\end{gather*}
For $m\ge1$, the homomorphism
$\tau^1_m$ restricts to
\begin{gather*}
  \tau^1_m|_{G^0_{m+1}}:G^0_{m+1}\lto\Hom_{\Z[K/K_1]}(K_1^\abel,\Lie_{m+1}(K_1^\abel)).
\end{gather*}
\end{proposition}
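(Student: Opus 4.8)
The plan is to read off both statements directly from the compatibility relation \eqref{e16} between the two components $\tau^0$ and $\tau^1$ of the Johnson homomorphism, combined with the identifications $\ker\tau^0_m = G^0_{m+1}$ recorded just above the statement and the nesting \eqref{nested}. Recall that $K/K_1$ acts on $K_1^\abel$, and hence functorially on each $\Lie_n(K_1^\abel)$, by conjugation; this is precisely the degree-$0$ action of $\bar K_0 = K/K_1$ on $\bar K_+$ from \eqref{e96}. Consequently, membership in $\Aut_{\Z[K/K_1]}(K_1^\abel)$, resp.\ in $\Hom_{\Z[K/K_1]}(K_1^\abel,\Lie_{m+1}(K_1^\abel))$, is exactly the assertion that the relevant map $\phi$ satisfies $\phi({}^ab)={}^a(\phi(b))$ for all $a\in K$ and $b\in K_1$. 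So in each case it suffices to show that the extra term in \eqref{e16} degenerates on the prescribed subgroup.

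First I would treat the case $m=0$. Here $G^0_1=\ker\tau^0_0$, so for $g\in G^0_1$ we have $\tau^0_0(g)=\id_{K/K_1}$; substituting $\tau^0_0(g)(a)=a$ into the top line of \eqref{e16} yields
$$
\tau^1_0(g)({}^ab)={}^a\big(\tau^1_0(g)(b)\big)
$$
for all $a\in K$, $b\in K_1$. Since $\tau^1_0(g)$ is already known to be an automorphism of $K_1^\abel$, this is exactly the $K/K_1$-equivariance needed to land in $\Aut_{\Z[K/K_1]}(K_1^\abel)$.

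For $m\ge 1$, I would first note that the nesting \eqref{nested} gives $G^0_{m+1}\le G_m$, so that restricting $\tau^1_m$ (which is defined on $G_m$) to $G^0_{m+1}$ makes sense. Since $G^0_{m+1}=\ker\tau^0_m$, every $g\in G^0_{m+1}$ has $\tau^0_m(g)=0$, whence the bracket term $[\tau^0_m(g)(a),{}^ab]$ in the bottom line of \eqref{e16} vanishes and we are left with
$$
\tau^1_m(g)({}^ab)={}^a\big(\tau^1_m(g)(b)\big),
$$
that is, $\tau^1_m(g)\in\Hom_{\Z[K/K_1]}(K_1^\abel,\Lie_{m+1}(K_1^\abel))$.

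I do not expect a genuine obstacle here beyond bookkeeping: the single failure of equivariance in \eqref{e16} is governed entirely by the component $\tau^0$, which vanishes exactly on $\ker\tau^0_m=G^0_{m+1}$ (and on $G^0_1$ when $m=0$), so the only points to verify carefully are the kernel identification $\ker\tau^0_m=G^0_{m+1}$ and the inclusion $G^0_{m+1}\le G_m$ ensuring the restriction is well defined. Once these are in hand, the conclusion is immediate from \eqref{e16}.
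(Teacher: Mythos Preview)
Your proposal is correct and follows exactly the same approach as the paper: the paper's proof is the single sentence ``This immediately follows from \eqref{e16},'' and you have simply unpacked that sentence by showing how the kernel identifications $\ker\tau^0_m=G^0_{m+1}$ kill the obstructing term in \eqref{e16}. Your added remarks on the inclusion $G^0_{m+1}\le G_m$ from \eqref{nested} and on why equivariance with respect to the $\bar K_0$-action is the same as $\Z[K/K_1]$-linearity are useful clarifications but do not change the argument.
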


\begin{proof}
{This immediately follows from \eqref{e16}.}
\end{proof}

\begin{proposition}
  \label{r5}
Let $m\geq 1$. There is a map
$$
\ti\tau_m^0: G^0_{m} \lto Z^1 (K_0, \Lie_m({K_1^\abel})  )
$$
which is a homomorphism for $m\geq 2$ (resp., a $1$-cocycle for $m=1$) with kernel~$G^0_{m+1}$,
and which makes the following diagram commute:
\begin{equation} \label{comd}
\xymatrix{
G^0_{m} \ar@{-->}[rr]^-{\ti\tau_m^0}  &&  Z^1 (K_0, \Lie_m({K_1^\abel})  )  \\
{G_{m}} \ar[rr]^-{\tau_m^0}  \ar@{->}[u]&&  Z^1 (K_0/K_1, \Lie_m({K_1^\abel})  ) \ar[u]
}
\end{equation}
(Here the {arrow on the left} is the inclusion, and {that on the right} is induced by the projection $K_0 \to K_0/K_1$.)
\end{proposition}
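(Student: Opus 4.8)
The plan is to define $\ti\tau_m^0$ by the very formula that gives the degree-$0$ part of the Johnson homomorphism in \eqref{e63}, but with the larger source $K_0$ in place of $K_0/K_1$. Concretely, for $g\in G^0_m$ and $a\in K_0$ I set $\ti\tau_m^0(g)(a)=[g,a]\,K_{m+1}$; this is well defined because $g\in G^0_m$ means exactly $[g,K_0]\subset K_m$, so $[g,a]\in K_m$ and $[g,a]K_{m+1}\in K_m/K_{m+1}=\Lie_m(K_1^\abel)$. Here $K_0$ acts on $\Lie_m(K_1^\abel)$ through $K_0/K_1$, since $[K_1,K_m]\subset K_{m+1}$, so the target $Z^1(K_0,\Lie_m(K_1^\abel))$ makes sense. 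That each $\ti\tau_m^0(g)$ is a $1$-cocycle on $K_0$ is then immediate from \eqref{ee7}: from $[g,ab]=[g,a]\cdot{}^a[g,b]$ and \eqref{e96} one reads off $\ti\tau_m^0(g)(ab)=\ti\tau_m^0(g)(a)+{}^a\big(\ti\tau_m^0(g)(b)\big)$. The commutativity of \eqref{comd} and the description of the kernel are then formal: for $g\in G_m$ both $\ti\tau_m^0(g)$ and the pullback of $\tau_m^0(g)$ along $K_0\to K_0/K_1$ send $a$ to $[g,a]K_{m+1}$ by \eqref{e63}, while $\ker\ti\tau_m^0=\{g\in G^0_m\mid [g,K_0]\subset K_{m+1}\}=G^0_{m+1}$.

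The substantive point is the behaviour of $\ti\tau_m^0$ under a product $gg'$, and this is where the cases $m\ge 2$ and $m=1$ part ways. Expanding $[gg',a]={}^g[g',a]\cdot[g,a]$ and ${}^g[g',a]=[g,[g',a]]\cdot[g',a]$ reduces everything to controlling $[g,[g',a]]$ with $[g',a]\in K_m$. The key observation is that $g\in G^0_m$ forces $[g,K_1]\subset K_m$, i.e.\ $g\in G^1_{m-1}$, so the Andreadakis-type description \eqref{e10} available in this section (a consequence of Proposition \ref{r10} for $K_+=\Gamma_+K_1$) yields $[g,K_m]\subset K_{2m-1}$. I expect this step, $[g,K_m]\subset K_{2m-1}$, to be the crux of the argument.

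For $m\ge 2$ one has $2m-1\ge m+1$, hence $[g,[g',a]]\in K_{m+1}$ and ${}^g[g',a]\equiv[g',a]\pmod{K_{m+1}}$; this gives $\ti\tau_m^0(gg')=\ti\tau_m^0(g)+\ti\tau_m^0(g')$, so $\ti\tau_m^0$ is an honest homomorphism into the abelian group $Z^1(K_0,\Lie_m(K_1^\abel))$. For $m=1$ no such gain is available, and the factor ${}^g[g',a]$ must instead be read through the action of $g$ on $K_1^\abel$, that is through $\tau_0^1(g)$; this produces the twisted identity $\ti\tau_1^0(gg')=\ti\tau_1^0(g)+{}^g\ti\tau_1^0(g')$ for the action $({}^gc)(a)=\tau_0^1(g)\big(c(a)\big)$ of $G^0_1$ on $Z^1(K_0,K_1^\abel)$, i.e.\ $\ti\tau_1^0$ is a $1$-cocycle. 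To make this precise I would first check that this action is well defined, namely that $\tau_0^1(g)$ commutes with the $K_0$-action on $K_1^\abel$ for $g\in G^0_1$: indeed $\tau_0^1(g)({}^ax)={}^{g(a)}\tau_0^1(g)(x)$ and $g(a)K_1=aK_1$ since $[g,a]\in K_1$, while the $K_0$-action on $K_1^\abel$ factors through $K_0/K_1$. Finally, that the ``kernel'' $G^0_2$ of the $1$-cocycle $\ti\tau_1^0$ is a subgroup follows from the standard crossed-homomorphism argument.
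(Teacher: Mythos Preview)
Your proposal is correct and follows essentially the same approach as the paper: define $\ti\tau_m^0(g)(a)=[g,a]K_{m+1}$, check it is a $1$-cocycle on $K_0$ via \eqref{ee7}, and expand $[gg',a]={}^g[g',a]\cdot[g,a]$ to obtain $\ti\tau_m^0(gg')={}^g\ti\tau_m^0(g')+\ti\tau_m^0(g)$, whence a homomorphism for $m\ge2$ and a $1$-cocycle for $m=1$. The paper's proof is in fact terser than yours---it writes down the identity $(gh)'(x)={}^g(h'(x))+g'(x)$ and immediately concludes, leaving implicit both the triviality of the $G^0_m$-action on $K_m/K_{m+1}$ for $m\ge2$ and the well-definedness of the twisted action on $Z^1(K_0,K_1^\abel)$ for $m=1$; you supply these justifications explicitly. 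One minor simplification: for the $m\ge2$ step you invoke $g\in G^1_{m-1}$ to get $[g,K_m]\subset K_{2m-1}$, but it suffices to observe $g\in G^1_1$ (since $[g,K_1]\subset K_m\subset K_2$), whence $[g,K_m]\subset K_{m+1}$ directly by \eqref{e10}.
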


\begin{proof}
For  $g\in G^0_m$, {the map}  $g': K_0 \to K_m/K_{m+1} \simeq \Lie_m({K_1^\abel})$
defined by $g'(x) =  [g,x]$ is a $1$-cocycle.
Thus the map $\ti\tau_m^0: G^0_{m} \to Z^1 (K_0, \Lie_m({K_1^\abel})  )$ defined by ${\ti\tau}_m^0(g)=g'$
{makes}  {the diagram \eqref{comd} commute}. For  $g,h \in G_m^0$ and $x\in K_0$, we have
$$
(gh)'(x) = {[gh,x]}K_{m+1} =   {{}^{g}[h,x][g,x]}K_{m+1} = {}^{g}(h'(x)) + g'(x).
$$
{Hence,} ${\ti\tau}_m^0$ is a $1$-cocycle for $m=1$, and a homomorphism for $m>1$. Clearly, its kernel is $G^0_{m+1}$.
\end{proof}

{We now  illustrate the above constructions with a few examples.} 

\begin{example}
  \label{r14}
  {As in Example \ref{Johnson}, {we consider} the mapping class group 
  $G= \hbox{MCG}(\Sigma_{g,1},\partial \Sigma_{g,1})$ {acting} on
$K=\pi_1(\Sigma_{g,1},\star)$.   
If $H:=K_1$ is a characteristic subgroup of $K$, then
 the filtration $(G^0_m)_{m\ge1}$ of $G^0_1$ coincides with the ``higher
  order Johnson filtration'' defined by McNeill \cite{McNeill},
and  her ``higher order Johnson homomorphism'' $\tau_m^H$
 coincides with our $\tau^1_{m-1}|_{G^0_m}$ {for}  $m\geq 2$.
When $H=\Gamma_2 K$, the subgroup $G^0_1$ of $G$ is the Torelli group,
and $G^0_2$ is  the kernel of the so-called ``Magnus representation'': 
the study of this case is carried out in \cite{McNeill}.}
\end{example}

\begin{example} \label{fake_handlebody}
Let $K_0=\langle x_1,\ldots ,x_p,y_1,\ldots ,y_q\rangle $ be the free
group of rank $p+q$, $p,q\ge0$.
Set $K_1=\lala x_1,\ldots ,x_p\rara\trll K_0$.  We have $K_0/K_1\simeq
F_q:=\langle y_1,\ldots ,y_q\rangle $.  Let~$K_*$ be the extended N-series defined by
\eqref{e106}. We call
\begin{gather*}
  G=\Aut(K_*)=\{f\in \Aut(K_0)\;|\;f(K_1)=K_1\}
\end{gather*}
the \emph{fake handlebody group} of type $(p,q)$, and 
$$
G^0_1 = \ker  ( G \longrightarrow \Aut(F_q)   )
$$
the \emph{fake twist group} of type $(p,q)$; see Example
\ref{handlebody} below to {clarify}
this terminology.
If $p\ge 1$ and $(p,q)\neq(1,0)$, then $K_1$ is a non-abelian free
group, and {Theorem~\ref{G[m]=G<m>} applies.}
We will study these groups in more details in \cite{HM_tg2}
using the Johnson homomorphisms $(\tau_m^1)_{m\geq 0}$  and
  $({\ti\tau}_m^0)_{m\geq 0}$ defined on
the two nested filtrations $({G_m})_{m\geq 0}$ and $({G^0_m})_{m\geq0}$, respectively.
\end{example}

\begin{example} \label{handlebody}
Let $V_g$ be a handlebody of genus $g\geq 1$, fix a disk $S\subset \partial V_g$
and let $\Sigma_{g,1}  = \partial V_g \setminus \operatorname{int}(S)$.
{Let}  $\star \in \partial \Sigma_{g,1}$ and set
$$
K_0  = \pi_1(\Sigma_{g,1},\star) \quad \hbox{and} \quad K_1  = \ker ( i_*: \pi_1(\Sigma_{g,1},\star) \longrightarrow \pi_1(V_g,\star) ),
$$
where $i_*$ is induced by the  inclusion $i:\Sigma_{g,1} \hookrightarrow V_g$. Let   $\operatorname{MCG}(\Sigma_{g,1}, \partial \Sigma_{g,1})$ act
  on~$K_0$ in the canonical way. The subgroup
$$
G =  \{ f\in \operatorname{MCG}(\Sigma_{g,1}, \partial \Sigma_{g,1}) \  \vert\ f_*(K_1)=K_1  \}
$$
is usually called the \emph{handlebody group},
{since it is}
the image of $\operatorname{MCG}(V_{g}, S)$
in $\operatorname{MCG}(\Sigma_{g,1}, \partial \Sigma_{g,1})$
by the restriction homomorphism (which is injective). 
The subgroup
\begin{gather*}
  \begin{split}
{G^0_1} &= \ker  ( G  \longrightarrow \Aut(K_0/ K_1 )   )\\
&\simeq \ker  ( \operatorname{MCG}(V_{g}, S) \longrightarrow \Aut( \pi_1(V_g,\star))  ),
  \end{split}
\end{gather*}
usually called the \emph{twist group},
is generated by  Dehn twists along  the boundaries of 2-disks properly
embedded in~$V_g\setminus S$ \cite{Luft}.
The present example corresponds to Example \ref{fake_handlebody}
{with} $p=q=g$, {where} the basis $(x_1,\dots,x_g,y_1,\dots,y_g)$  of $K_0$ is a system of meridians and parallels on $\Sigma_{g,1}$,
and the automorphisms of $K_0$ are required to fix the homotopy class of $\partial \Sigma_{g,1}$.
{We will prove in \cite{HM_tg2} that this} boundary condition implies {that the two nested filtrations \eqref{nested} on $G$ agree:}
$$
{G_m}= {G^0_m} \quad  \hbox{for all } m\geq 0.
$$
{In} this case, the Johnson homomorphisms 
$(\tau_m^1)_{m\geq 0}$  and $({\ti\tau}_m^0)_{m\geq 0} = ({\tau}_m^0)_{m\geq 0}$
are interchangeable and correspond to the ``tree reduction'' of the Kontsevich-type functor $Z$ introduced in \cite{HM_tg0}.
{Moreover,}  the {maps $\tau^1_m|_{G^0_{m+1}}$ given in}  Proposition \ref{r20} are trivial.
\end{example}

\subsection{N-series determined by $K_1$ and $K_2$}\label{sec:second-way}

Let ${K=K_1}$ be a group, and {let $K_2\trll K$ with ${K_2 \ge  [K,K]}$}.
Let $K_+ = (K_m)_{m\geq 1}$ be the smallest N-series of $K$ with these $K_1$ and $K_2$,
{i.e., $K_+$ is defined by}
\begin{equation} \label{Km}
  K_m= [K_{m-1}, K_1] \cdot  [K_{m-2}, {K_2}]
\end{equation}
inductively for $m\ge3$. Note that
$$
\Gamma _m K \subset K_m \subset \Gamma _{\lceil m/2 \rceil} K
$$
{for $m\ge 1$,}
where $\lceil m/2 \rceil=\min\{n\in\Z\;|\;n\ge m/2\}$.
Extend $K_+$ to an extended N-series $K_*=(K_m)_{m\geq 0}$ with $K_0=K_1$.

{Let} a group $G$ act on  ${K}$ in such a way that ${}^GK_2=K_2$.
{Then each $g\in G$ satisfies $g(K_j) \subset K_j$ for all $j\ge 3$,
as can be verified inductively  using \eqref{Km}. Hence $G$ acts on $K_*$
and we can consider the induced Johnson filtration $\modF^{K_*}_*(G)$. It} has the following  description.  {Set}
\begin{gather}
 \label{G[m]_bis}
{G_m} =  \{g\in G
\  \vert\ [g,K_1]\subset {K}_{m+1},\, [g, {K_2}]\subset {K}_{m+2} \} \qquad  \hbox{for $m\ge 0 $.}
\end{gather}

\begin{proposition}
  \label{r205}
  We have $ \modF ^{{K}_*}_m(G)={G_m}$ for all~${m\ge 0}$.
  Hence $G_*=(G_m)_{m\ge 0}$  is an extended N-series.
\end{proposition}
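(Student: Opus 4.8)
The plan is to prove the two inclusions $\modF^{K_*}_m(G)\subseteq G_m$ and $G_m\subseteq\modF^{K_*}_m(G)$ separately. The first is immediate: by \eqref{e11} any $g\in\modF^{K_*}_m(G)$ satisfies $[g,K_n]\subset K_{m+n}$ for all $n\ge 0$, and specializing to $n=1,2$ gives exactly the two conditions defining $G_m$ in \eqref{G[m]_bis}. The content is therefore the reverse inclusion, which amounts to showing that the two a priori weak hypotheses $[g,K_1]\subset K_{m+1}$ and $[g,K_2]\subset K_{m+2}$ propagate to $[g,K_n]\subset K_{m+n}$ for every $n\ge 0$. First I would record that $G_m$ is a subgroup of $G$: since $g(K_j)=K_j$ for all $j$, the set $G_m$ is the intersection of the kernels of the homomorphisms $G\to\Aut(K_1/K_{m+1})$ and $G\to\Aut(K_2/K_{m+2})$. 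This lets me work with the whole subgroup $G_m$ at once rather than with a single element.

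I would prove the uniform statement $[G_m,K_n]\subset K_{m+n}$ for all $n\ge 0$ by induction on $n$, working inside the semidirect product $K\rtimes G$, in which $K_{m+n}$ is normal (it is normal in $K$ because $K_*$ is an extended N-series, and $G$ preserves each $K_j$). The cases $n=0,1,2$ are the defining conditions of $G_m$ together with $K_0=K_1$ and $K_{m+1}\subset K_m$. For $n\ge 3$ I use the recursive description \eqref{Km}, namely $K_n=[K_{n-1},K_1]\cdot[K_{n-2},K_2]$, and treat the two factors by the three subgroups lemma (Lemma \ref{3s}) with $N=K_{m+n}$ and $C=G_m$. For the first factor I take $A=K_{n-1}$, $B=K_1$: then $[A,[B,C]]\subset[K_{n-1},K_{m+1}]\subset K_{m+n}$ using $[G_m,K_1]\subset K_{m+1}$, while $[B,[C,A]]\subset[K_1,K_{m+n-1}]\subset K_{m+n}$ using the induction hypothesis $[G_m,K_{n-1}]\subset K_{m+n-1}$, each final containment being the N-series inequality $[K_a,K_b]\subset K_{a+b}$. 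Lemma \ref{3s} then yields $[G_m,[K_{n-1},K_1]]\subset K_{m+n}$. The second factor is handled identically with $A=K_{n-2}$, $B=K_2$, now invoking $[G_m,K_2]\subset K_{m+2}$ and the induction hypothesis $[G_m,K_{n-2}]\subset K_{m+n-2}$.

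To close the inductive step I combine the two factors: modulo the normal subgroup $K_{m+n}$ of $K\rtimes G$, both $[K_{n-1},K_1]$ and $[K_{n-2},K_2]$ centralize the image of $G_m$, hence so does the subgroup they generate, which is $K_n$; therefore $[G_m,K_n]\subset K_{m+n}$. (Equivalently, one expands $[g,ab]=[g,a]\cdot{}^a[g,b]$ via \eqref{ee7} and uses normality of $K_{m+n}$.) Once $[G_m,K_n]\subset K_{m+n}$ holds for all $n$, every $g\in G_m$ lies in $\modF^{K_*}_m(G)$ by \eqref{e11}, giving $G_m=\modF^{K_*}_m(G)$; the final assertion that $G_*$ is an extended N-series is then Proposition \ref{r77}. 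I expect the only delicate point to be the bookkeeping in the three subgroups lemma, namely choosing the roles of $A,B,C$ so that one hypothesis falls on an N-series bracket and the other on the induction hypothesis, together with the passage from the two generating subgroups of $K_n$ to $K_n$ itself, which the centralizer-in-the-quotient observation handles cleanly.
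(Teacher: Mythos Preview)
Your proof is correct and follows essentially the same approach as the paper. The paper's proof is terse: it notes the obvious inclusion $\modF^{K_*}_m(G)\subset G_m$, then says the reverse inclusion is proved by induction on $i$ using~\eqref{Km} ``similarly to the proof of Proposition~\ref{r10} in the case $K_+=\Gamma_+K$'', leaving the reader to fill in the details. You have spelled out precisely that argument, including the necessary adaptation that $K_n=[K_{n-1},K_1]\cdot[K_{n-2},K_2]$ has two factors (rather than one as in $\Gamma_nK=[K,\Gamma_{n-1}K]$), so that the three subgroups lemma must be applied twice and the results combined via~\eqref{ee7}.
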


\begin{proof}
{Obviously,   $ \modF ^{K_*}_m(G) \subset G_m $ and $G_0=G=\modF^{K_*}_0(G)$.}

{It} remains to  prove ${G_m} \subset \modF ^{K_*}_m(G)$ {for $m\ge 1$.}
It suffices  to check that if  $g\in G$ satisfies $[g,K_1]\subset K_{m+1}$ and  $[g,{K_2}]\subset K_{m+2}$,
 then we have $[g,K_i]\subset K_{m+i}$  for all $i\ge 1$. This is obvious for {$i=1,2$.}
The case $i\geq 3$ is proved by an induction using~\eqref{Km},
similarly to the proof of Proposition \ref{r10} {in the case} $K_+=\Gamma _+K$.
\end{proof}

By Corollary \ref{simpler}, we have an injective morphism of eg-Lie algebras
\begin{equation} \label{1_and_2}
\bar\tau _\bu: {\bar G}_\bu \longrightarrow \Der_\bu(\bar{K}_+).
\end{equation}
In contrast with  Section~\ref{sec:first-way},
the graded Lie algebra $\bar{K}_+$ is not generated by its {degree $1$} part.
Thus, Proposition~\ref{r98} does not apply and $t_\bu:  \Der_\bu(\bar{K}_+) \to D_\bu(\bar{K}_+)$ might not be injective.
{Nonetheless, $\bar{K}_+$ is generated by its {degree $1$ and $2$} parts.
This observation motivates the following  definitions.}

  Let $L_+$ be a graded Lie algebra, and let  $A$ be a subgroup of $L_2$ such that $L_2 =[L_1,L_1]+A$.
    We define a graded  group $D_\bu(L_+,A)$ as follows.
    For $m\geq 1$,  consider the abelian group
 \begin{eqnarray*}
 D_m(L_+,A) & = & \Hom(L_1,L_{m+1}) \times  \Hom(A,L_{m+2})
 \end{eqnarray*}
   and,  for $m=0$,  set
    \begin{eqnarray*}
    D_0(L_+,A) & = &\Big\{ (u,v) \in \Aut(L_1)  \times  \Hom(A,L_2)  \\
    & &   \quad \, \Big\vert \, \begin{array}{l} \hbox{\small the map $[x_1,y_1] +a \longmapsto [u(x_1),u(y_1)]+ v(a)$} \\
    \hbox{\small defines an automorphism of $[L_1,L_1] + A=L_2$}\end{array} \Big\}.
    \end{eqnarray*}
The {subgroup}
$$
 \{ (d_1,d_2) \in \Aut(L_1) \times \Aut(L_2) \, \vert \, d_2([b,c]) = [d_1(b),d_1(c)] \hbox{ for } b,c \in L_1 \}
$$
of $\Aut(L_1) \times \Aut(L_2)$ is mapped bijectively onto
$D_0(L_+,A)$ by $(d_1,d_2) \mapsto (d_1,d_2\vert_A)$. Hence
$D_0(L_+,A)$ inherits from $\Aut(L_1) \times \Aut(L_2)$ a  group structure.
For every~${m\geq 0}$, there is a homomorphism
$$
t_m: \Der_m(L_+) \lto D_m(L_+,A), \ (d_i)_{i\geq 1} \longmapsto (d_1,d_2\vert_A).
$$
Clearly, $t_\bu=(t_m)_{m\geq 0}$ is injective
if the graded Lie algebra $L_+$ is generated by its degree $1$ and $2$ parts
(and, so, by {$L_1 \oplus A$}).
Furthermore, $t_\bu$ is bijective if $L_+$ is freely generated
by {$L_1\oplus A$, where $L_1$ and $A$ are in degree $1$ and $2$,  respectively.} Hence,
in this case, there is a unique eg-Lie algebra structure on $D_\bu(L_+,A)$ such that $t_\bu$ is an  eg-Lie algebra isomorphism.

Now, \emph{let $\bar{K}_+$ be freely generated by $B = \bar{K}_1$ and a subgroup $A$ of~$\bar K_2$.}
Then, {the previous paragraph gives}
an injective eg-Lie algebra morphism
 \begin{gather*}
   {\bar G}_\bu \stackrel{\bar\tau _\bu} {\longrightarrow} \Der_\bu(\bar K_+)  \mathop{\longrightarrow}^{t_\bu}_\simeq  D_\bu(\bar K_+,A).
  \end{gather*}
The {$m$th}  Johnson homomorphism $t_m \tau_m: G_m \to D_m(\bar K_+,A)$ has two components
$$
\tau_0^1: G_0 \longrightarrow \Aut(B), \qquad \tau_0^2: G_0 \longrightarrow {\Hom(A,\Lambda^2 B)\times \Aut(A)}
$$
for $m=0$, and
$$
\tau_m^1: G_m \longrightarrow \Hom( B, \Lie_{m+1}(B;A)), \qquad \tau_m^2: G_m \longrightarrow \Hom(A,  \Lie_{m+2}(B;A))
$$
for $m\geq 1$. Here  $\Lie(B;A)$ {denotes} the graded Lie algebra freely generated by
{$B\oplus A$, where $B$ and $A$ are in degree $1$ and $2$, respectively.}

We illustrate the above constructions with {a few}  examples.
{The following lemma} is easily deduced from \cite[Proposition 1]{Labute}.

\begin{lemma} \label{free_1_2}
{Let}  $K=K_1 =\langle x_1,\ldots ,x_p,y_1,\ldots ,y_q\rangle $ {be}
a free group of rank $p+q$ with $p,q\geq 0$, and {let}
 $$
 K_2 = \Gamma _2 K \cdot \lala  x_1,\dots, x_p \rara
 = \ker  (K \rightarrow  \langle y_1,\ldots ,y_q\rangle^\abel ).
 $$
 Then the graded Lie algebra $\bar K_+$ is freely generated by
 {$y_1K_2, \dots, y_q K_2$ in degree $1$
and by  $x_1K_3, \dots, x_p K_3$ in degree $2$.}
\end{lemma}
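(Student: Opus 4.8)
The plan is to recognize $K_*$ as the \emph{weighted lower central series} of the free group $K$ for the weight assignment $w(y_j)=1$, $w(x_i)=2$, and to identify its associated graded $\bar K_+$ with the free graded Lie algebra $\Lie(B;A)$, where $B=\langle y_1K_2,\dots,y_qK_2\rangle$ sits in degree $1$ and $A=\langle x_1K_3,\dots,x_pK_3\rangle$ in degree~$2$. Since $y_j\in K_1$ and $x_i\in K_2$, sending the degree-$1$ generators of $\Lie(B;A)$ to the classes $y_jK_2\in\bar K_1$ and the degree-$2$ generators to $x_iK_3\in\bar K_2$ determines a morphism of graded Lie algebras $\Phi\colon\Lie(B;A)\to\bar K_+$, and the lemma asserts that $\Phi$ is an isomorphism.

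First I would establish surjectivity. Passing to the associated graded, the defining recursion~\eqref{Km} yields $\bar K_m=[\bar K_{m-1},\bar K_1]+[\bar K_{m-2},\bar K_2]$ for $m\ge 3$, so $\bar K_+$ is generated as a Lie algebra by $\bar K_1$ and $\bar K_2$. Here $\bar K_1=K/K_2\cong\Z^q$ is generated by the classes $y_jK_2$ (the $x_i$ vanish in $\bar K_1$, lying already in $K_2$), while $\bar K_2=K_2/K_3$ is generated by the brackets $[y_jK_2,y_kK_2]$ together with the $x_iK_3$. Indeed every element of $K_2=\Gamma_2K\cdot\lala x_1,\dots,x_p\rara$ is, modulo $K_3$, a product of commutators $[y_j,y_k]$ and conjugates ${}^wx_i^{\pm1}$; since $\bar K_2$ is abelian this becomes a sum of classes $[y_jK_2,y_kK_2]$ and $x_iK_3$, using ${}^wx_i\equiv x_i\pmod{K_3}$, which holds because $[w,x_i]\in[K_1,K_2]\subset K_3$. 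Thus the generators of $\Lie(B;A)$ reach a generating set of $\bar K_+$, so $\Phi$ is onto.

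The crux, and the step I expect to be the main obstacle, is injectivity: this is precisely the freeness of $\bar K_+$ on the weighted generators, i.e.\ the assertion that no relations hold among the iterated brackets of the $y_jK_2$ and $x_iK_3$ beyond those forced by the Lie axioms. This is exactly the content of \cite[Proposition~1]{Labute}, which computes the graded Lie algebra of the weighted lower central series of a free group and shows it to be freely generated by the initial forms of the free generators, each placed in the degree equal to its weight. The remaining work is therefore to verify that our $K_*$ is literally Labute's weighted series for the weights above; this is immediate from~\eqref{Km} together with the identity $K_2=\Gamma_2K\cdot\lala x_1,\dots,x_p\rara$, which guarantees $y_j\in K_1\setminus K_2$ and $x_i\in K_2\setminus K_3$, carrying weights $1$ and $2$. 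Granting this identification, Labute's proposition gives freeness, hence injectivity of $\Phi$. As a conceptual check, one may note that $K\cong\lala x_1,\dots,x_p\rara\rtimes\langle y_1,\dots,y_q\rangle$ with $\lala x_1,\dots,x_p\rara$ free on the conjugates ${}^wx_i$, which is the group-level shadow of the Lazard elimination decomposition of $\Lie(B;A)$ and explains why no extra relations appear.
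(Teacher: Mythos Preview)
Your proposal is correct and matches the paper's own approach: the paper simply states that the lemma ``is easily deduced from \cite[Proposition~1]{Labute}'', and you have spelled out precisely this deduction---identifying $K_*$ with Labute's weighted lower central series for weights $w(y_j)=1$, $w(x_i)=2$, so that his Proposition~1 yields the freeness of $\bar K_+$ on the indicated generators. Your added surjectivity argument and the verification that the recursive definition~\eqref{Km} reproduces Labute's filtration are exactly the details the paper leaves implicit.
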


\begin{example}
  \label{r7}
This {generalizes}  Example \ref{Johnson}.
Let $\Sigma_{g,1}^p $ be  the surface $\Sigma_{g,1}$ with $p\geq 0$
punctures, and {let} $i:\Sigma_{g,1}^p \to \Sigma_{g,1}$ {be} the inclusion.
Set $K= K_1 = \pi_1(\Sigma_{g,1}^p,\star)$, where $\star \in \partial \Sigma_{g,1}^p = \partial \Sigma_{g,1}$, and
$$
{K_2}  = \ker\Big( {\pi_1(\Sigma_{g,1}^p,\star) \stackrel{i_*}{\longrightarrow}  \pi_1(\Sigma_{g,1},\star)
\stackrel{}{\longrightarrow} \pi_1(\Sigma_{g,1},\star)^\abel \simeq  H_1(\Sigma_{g,1};\Z)}   \Big).
$$
 The {smallest N-series $K_+=(K_m)_{m\geq 1}$ with these $K_1$ and $K_2$} is known as the
\emph{weight filtration}.  {It}  {was}  introduced by Kaneko \cite{Kaneko} in the framework of pro-$\ell$
groups following ideas of Oda, {and} has been studied by several authors
including Nakamura and Tsunogai~\cite{NT}, and Asada and Nakamura
\cite{AN}.

Set $B=K_1/K_2=H_1(\Sigma_{g,1};\Z)$ and
 $$
 A = \ker ( i_*: H_1(\Sigma_{g,1}^p;\Z) \longrightarrow H_1(\Sigma_{g,1};\Z)).
 $$
{We regard $A$} as a subgroup of $K_2/K_3$ {as follows.}
Let $x_1,\ldots ,x_p\in K$ be represented by loops (based at $\star$) around
the $p$ punctures. Since {$A$} is free abelian with basis $[x_1],\ldots ,[x_p]$, there is a unique
homomorphism $j:A \to K_2/K_3$ defined by $j([{x_i}]) = {x_i   K_3}$; one easily checks
that $j$ does not depend on the choice of $x_1,\ldots ,x_p$.
By Lemma ~\ref{free_1_2}, $j$ is injective and the graded Lie algebra $\bar K_+$ is freely
generated by ${B\oplus j(A)}$, where $B$ and $j(A)$ are in degree $1$ and $2$, respectively.

The mapping class group $G = \operatorname{MCG}(\Sigma_{g,1}^p,\partial \Sigma_{g,1}^p)$ acts on $K$ in the canonical way,
{and we have  ${}^GK_2=K_2$.}
{The} extended N-series     $G=G_0 \ge G_1 \ge G_2 \ge \cdots $
coincides with the filtration
$$
 \Gamma ^*_{g,[p+1]}  \ge  \Gamma ^*_{g,p+1}(1)  \ge  \Gamma ^*_{g,p+1}(2)  \ge  \cdots
$$
in \cite[\S 2.1]{AN}. Furthermore, for $m\geq 1$,
the Johnson homomorphism $t_m\tau_m =(\tau^1_m,\tau^2_m)$
is essentially the same  as  the homomorphism $c_m$ in \cite[\S 2.2]{AN}.

{There} is a short exact sequence
$$
1 \longrightarrow B_p(\Sigma_{g,1}) \longrightarrow G \longrightarrow
\operatorname{MCG}(\Sigma_{g,1},\partial \Sigma_{g,1}) \longrightarrow 1, 
$$
{where $B_p(\Sigma_{g,1})$ is the braid group in $\Sigma_{g,1}$ on $p$ strands.}
Thus, the  homomorphisms $\tau^i_m$ {(for $m\geq 1$, $i=1,2$)}
{generalize both} the ``classical'' Johnson homomorphisms ($p=0$)
and Milnor's $\mu$-invariants ($g=0$).
{The former  are contained in the ``tree reduction'' of the LMO functor \cite{CHM},
while the latter are contained in the ``tree reduction'' of the Kontsevich integral \cite{HM}.}
It seems possible to describe diagrammatically the  generalized Johnson homomorphisms $\tau^i_m$ for any $g,p\geq 0$
{and}  to relate them to the ``tree reduction'' of the extended LMO functor introduced in \cite{Nozaki}.
\end{example}

\begin{example}
  \label{r8}
As in Example \ref{handlebody}, consider a handlebody $V_g$ of genus $g\geq 1$ and a surface $\Sigma_{g,1} \subset \partial V_g$ of genus $g$.
Set ${K}=K_1 = \pi_1(\Sigma_{g,1},\star)$  and
$$
{K_2}  = \ker\Big( {\pi_1(\Sigma_{g,1},\star) \stackrel{i_*}{\longrightarrow}  \pi_1(V_g,\star) \stackrel{}{\longrightarrow} \pi_1(V_g,\star)^\abel \simeq  H_1(V_g;\Z)}   \Big).
$$
{The smallest N-series $K_+=(K_m)_{m\geq 1}$ with these $K_1$ and $K_2$} is given by
$$
K_2 = \Gamma _2 K \cdot \mathbf{A}, \quad K_3 = \Gamma _3 K \cdot [K, \mathbf{A}], \quad \hbox{etc.,}
$$
where $\mathbf{A}  =  \ker\big( i_*: \pi_1(\Sigma_{g,1},\star) \longrightarrow   \pi_1(V_g,\star)   \big)$.
Let
 $$
 A  =  \ker\big( i_*: H_1(\Sigma_{g,1};\Z) \longrightarrow H_1(V_{g};\Z) \big) \quad \hbox{and} \quad
 B = H_1(V_g;\Z).
 $$
 Identify $B$ with $K_1/K_2$, and  let $j:A\to K_2/K_3$  be the   canonical homomorphism
 $$
 A \simeq \frac{\Gamma _2 K \cdot \mathbf{A}}{ \Gamma _2 K} \simeq \frac{  \mathbf{A}}{\Gamma _2 K \cap \mathbf{A} }
 =  \frac{  \mathbf{A}}{[K,\mathbf{A}] }
\longrightarrow \frac{K_2}{K_3}.
$$
Then, by Lemma \ref{free_1_2}, $j$ is injective and
 the graded Lie algebra $\bar K_+$ is freely
 generated by $B\oplus j(A)$, where $B$ and $j(A)$ are in degree $1$  and $2$, respectively.

The subgroup $G$ of $\operatorname{MCG}(\Sigma_{g,1}, \partial
  \Sigma_{g,1})$ that preserves the Lagrangian subgroup $A \subset H_1(\Sigma_{g,1};\Z)$ 
is usually called the \emph{Lagrangian mapping class group} of~$\Sigma_{g,1}$.
It  acts on~$K$ in the canonical way and satisfies ${}^G K_2 = K_2$.
Hence we {obtain} 
an extended N-series $G_*=  (G_m)_{m\geq 0}$, which is the Johnson filtration induced by $K_*$.
The  generalized Johnson homomorphisms $\tau_m^i$ {(}for $m\geq 0$, $i=1,2${)}
will be studied by Vera \cite{Vera} in relation with the ``tree reduction'' of the LMO functor introduced in \cite{CHM}.
This is also connected to the ``Lagrangian'' versions of the Johnson homomorphisms introduced by Levine in \cite{Levine1,Levine2}.
\end{example}

\section{Filtrations on group rings and their associated graded}\label{sec:filtr-group-rings-1}

In this section, we consider filtrations on group rings induced by extended N-series
and we compute their associated graded.
By a \emph{ring} we mean an associative ring with unit.

\subsection{Filtrations on group rings}    \label{sec:filtr-group-rings}

A \emph{filtered ring} $J_* = (J_{m})_{m\geq 0} $ is a ring $J_0$ together with a decreasing sequence
$$
 J_0 \supset J_1 \supset \cdots \supset J_k \supset J_{k+1} \supset \cdots
$$
of additive subgroups  such that
\begin{equation}
J_m J_n \subset J_{m+n} \quad \hbox{for } m,n\geq 0.
\end{equation}
Note that $J_m$ is an ideal of $J_0$ for {each} $m\geq 0$.
The \emph{associated graded} of $J_*$,
$$
\gr_\bullet (J_*) = \bigoplus_{k\geq 0} \frac{J_k}{J_{k+1}},
$$
{has} the obvious graded ring structure.

Let $K_*$ be an extended N-series, and $\Z[K_0]$ the group ring of $K_0$.
For $m\ge 1$, we~set
\begin{gather*}
I_m({K_*}) =  \ker \big(\Z  [K_0]\overset{\Z [ \pi _m ]}{\longrightarrow}\Z [K_0/K_m]\big),
\end{gather*}
where {$\pi _m: K_0\rightarrow K_0/K_m$ is the  projection.}
We associate to $K_*$ the filtered ring
\begin{equation} \label{Delta}
J_*(K_*) = (J_m(K_*) )_{m\geq 0}
\end{equation}
defined by $J_0(K_*) = \Z[K_0]$ and  by
\begin{gather*}
  J_m(K_*)  = \sum_{\substack{m_1,\ldots ,m_p\ge 1,\ p\ge 1\\m_1+\dots +m_p\geq m}}I_{m_1}(K_*) \cdots I_{m_p}(K_*)
\quad \hbox{for $m\ge 1$.}
\end{gather*}
Note that $ J_m(K_*)$ is the ideal of $\Z[K_0]$ generated by the elements  $(x_1-1)\cdots (x_p-1)$
for all  $x_1 \in K_{m_1}, \dots, x_p \in K_{m_p}$, $m_1+\dots +m_p \geq m$, $m_1,\ldots ,m_p\ge 1,\ p\ge 1$.
For instance, if $K_*$ is the extended N-series defined by the lower central series of the group $K_0$, then we have  $J_m(K_*) =I^m$, where
$I$ is the augmentation ideal of $\Z[K_0]$.

{Now we} equip the group ring $\Z[K_0]$ with {the} usual Hopf algebra
  structure {with {comultiplication}
    $\Delta $, counit $\epsilon $ and antipode $S$.} Since
\begin{gather*}
  \Delta(I_k(K_*)) \subset
  I_k(K_*) \otimes \Z[K_0]
  + \Z[K_0]\otimes I_k(K_*)
\end{gather*}
for $k\ge 0$, we have
$$
\Delta(J_m(K_*) ) \subset \sum_{i+j =m } J_i(K_*) \otimes J_j(K_*).
$$
Clearly, we  have $\epsilon (J_m(K_*) ) =0$  and $S(J_m(K_*)) = J_m(K_*)$ for all $m\geq 1$.
Hence  $J_*(K_*)$ has the structure of a filtered Hopf algebra
and, consequently, the associated graded
$$
\gr_\bullet( J_*(K_*)) = \bigoplus_{i\geq 0} \frac{J_i(K_*)}{J_{i+1}(K_*)}
$$
has the structure of a graded Hopf algebra.

\subsection{Universal enveloping algebras of eg-Lie algebras} \label{sec:uea}

 Let $L_\bu$ be an eg-Lie algebra.  Then we have two Hopf
algebras $\modZ [L_0]$ and $U(L_+)${, the universal enveloping algebra of  $L_+$.}
The action of $L_0$ on $L_+$ induces
an action {of} $\modZ [L_0]$ on $U(L_+)$.  The \emph{universal enveloping
algebra} $U(L_\bu)$ of $L_\bu$ is defined to be the crossed product
(or the smash product) $U(L_+)\,\sharp\,\modZ [L_0]$ of $U(L_+)$ and
$\modZ [L_0]$, which is the Hopf algebra structure on $U(L_+)\otimes \modZ [L_0]$
with multiplication and comultiplication defined by
\begin{gather}
  \label{multipl}
  (u\otimes g) \cdot (u'\otimes g)=u\, ({}^gu') \otimes gg'
  \qquad \text{for $u,u'\in U(L_+)$, $g,g'\in L_0$},\\
  \label{e1}
  \Delta (u\otimes g)=  \sum     ({u'}  \otimes g)\otimes ({u''}  \otimes g)
  \qquad \text{for $u\in U(L_+)$, $g\in L_0$},
\end{gather}
where $\Delta (u)= \sum{u'\otimes u''}$.

We usually write $u\otimes g=u\cdot g$ in $U(L_\bu)$, and we regard
both $U(L_+)$ and $\modZ [L_0]$ as Hopf subalgebras of $U(L_\bu)$.
By \eqref{multipl} we have
$$
g\cdot u \cdot g^{-1} = {}^g u\qquad \text{for $g\in L_0$,  ${u\in  U(L_+)}$}.
$$
The grading of $L_+$ makes $U(L_\bu)$ a graded Hopf algebra.

\subsection{Taking rational coefficients}\label{sec:taking-rati-coeff}

{Here we carry out some of the previous constructions over $\Q$.}
First of all, there is a notion of \emph{filtered $\Q$-algebra}
similar to {that} of filtered ring in Section \ref{sec:filtr-group-rings}.
For each extended N-series $K_*$, there is a filtration $J_*^\Q(K_*)$ {of $\Q[K_0]$} whose
definition {is} parallel to {that} of $J_*(K_*)$.

We define an \emph{eg-Lie $\Q$-algebra}~$L_\bu$ in the same  way as
{an} eg-Lie algebra in Section~\ref{sec:extended-graded-lie}:
here $L_+$ is assumed to be a graded Lie algebra {over} $\Q$.
For each extended N-series $K_*$, there is an \emph{associated eg-Lie $\Q$-algebra} $\gr_\bu^\Q(K_*)$
defined by  $\gr_0^\Q(K_*) = K_0/K_1$ and $ \gr_m^\Q(K_*) = (K_m/K_{m+1}) \otimes \Q$ for $m\geq 1$.

The contents of Section \ref{sec:derivation-eg-lie} can also be adapted to an eg-Lie $\Q$-algebra $L_\bu$.
Thus  we  define  the \emph{derivation eg-Lie $\Q$-algebra} $\Der_\bu(L_\bu)$ of~$L_\bu$,
and Theorem \ref{r101} works {over $\Q$ as well.}

Finally, the definitions of Section \ref{sec:uea} {work} also {over $\Q$.}
{The} \emph{universal enveloping algebra} $U(L_\bu)$ of an
eg-Lie $\Q$-algebra~$L_\bu$ is the $\Q$-vector space
$ U(L_+)\otimes_\Q \modQ [L_0]$
with multiplication $\cdot$ defined by~\eqref{multipl}.
Note that   $U(L_\bu)$ has  a graded  Hopf $\Q$-algebra structure.  Let
$\hat U(L_\bu)$ denote its degree-completion, which is a complete Hopf algebra.

\begin{lemma} \label{glp}
For {every} eg-Lie $\Q$-algebra $L_\bu$,
the group-like part of $\hat U(L_\bu )$ is
$$
\{ \exp(\ell)\cdot g\ \vert\ \ell \in \hat L_+, g\in L_0 \},
$$
where $\hat L_+$ denotes the degree-completion of $L_+$.
\end{lemma}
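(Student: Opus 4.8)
The plan is to prove the two inclusions separately, the easy one being that every $\exp(\ell)\cdot g$ is group-like and the substantial one being the converse. First I would record the general fact that the group-like elements of any complete Hopf algebra form a group, so it suffices to exhibit the two generating families as group-like. For $g\in L_0$, the element $1\cdot g$ satisfies $\Delta(1\cdot g)=(1\cdot g)\otimes(1\cdot g)$ and $\epsilon(1\cdot g)=1$ directly from \eqref{e1}. For $\ell\in\hat L_+$, the series $\exp(\ell)$ converges in $\hat U(L_+)$ because $L_+$ is concentrated in positive degrees, and $\exp(\ell)$ is group-like in the sub-Hopf-algebra $\hat U(L_+)\subset\hat U(L_\bu)$. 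Since group-likes are closed under multiplication, every product $\exp(\ell)\cdot g$ is group-like, which gives one inclusion.

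For the converse, I would exploit that, as a graded vector space, $U(L_\bu)=U(L_+)\otimes\Q[L_0]$ with $\Q[L_0]$ sitting entirely in degree $0$, so the completion only affects the $U(L_+)$ factor. Consequently every $x\in\hat U(L_\bu)$ has a unique expansion $x=\sum_{g\in L_0}u_g\cdot g$ with $u_g\in\hat U(L_+)$, the family being finite in each degree. Applying \eqref{e1} termwise yields $\Delta(x)=\sum_g\Delta_{U(L_+)}(u_g)\cdot(g\otimes g)$, supported only on pairs $(g,g)$, whereas $x\otimes x=\sum_{g,h}(u_g\otimes u_h)\cdot(g\otimes h)$ runs over all pairs. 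Comparing, in the equation $\Delta(x)=x\otimes x$, the components indexed by $(g,h)$ with $g\neq h$ forces $u_g\otimes u_h=0$, hence $u_g=0$ or $u_h=0$; as $\epsilon(x)=1$ makes $x\neq0$, exactly one group element $g_0$ survives, so $x=u\cdot g_0$ with $u:=u_{g_0}\neq0$.

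Comparing now the $(g_0,g_0)$-components gives $\Delta_{U(L_+)}(u)=u\otimes u$ and $\epsilon(u)=1$, so $u$ is group-like in $\hat U(L_+)$. At this point I would invoke the classical correspondence over $\Q$ between primitive and group-like elements of a completed enveloping algebra: the primitives of $\hat U(L_+)$ are exactly $\hat L_+$, and $\log$ restricts to a bijection from the group-likes onto the primitives, with inverse $\exp$ (see \cite{Quillen}). Hence $u=\exp(\ell)$ for $\ell=\log(u)\in\hat L_+$, and therefore $x=\exp(\ell)\cdot g_0$, which finishes the remaining inclusion.

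The step I expect to demand the most care is the degreewise bookkeeping in the second paragraph: since $L_0$ may be infinite, one must justify that $x$ genuinely decomposes as a sum $\sum_g u_g\cdot g$ that is finite in each degree, and that $\Delta(x)$ and $x\otimes x$ may be compared component by component inside $\hat U(L_\bu)\ho\hat U(L_\bu)$. The point that makes this rigorous is that the completion touches only the positively graded factor $U(L_+)$, while the group-algebra factor contributes an honest direct sum over pairs $(g,h)$; this cleanly separates the ``diagonal'' contribution of $\Delta(x)$ from the off-diagonal part of $x\otimes x$ and legitimizes the vanishing argument.
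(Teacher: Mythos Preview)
Your proposal is correct and follows essentially the same route as the paper's own proof: write $x=\sum_{g}u_g\cdot g$, use the diagonal support of $\Delta(x)$ versus the full support of $x\otimes x$ to force all but one $u_{g_0}$ to vanish, then identify the surviving coefficient as $\exp(\ell)$ via the primitive/group-like correspondence in $\hat U(L_+)$. Your extra paragraph on the degreewise bookkeeping is a welcome clarification of a point the paper leaves implicit.
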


\begin{proof}
  It is easy to see that  $\exp(\ell)\cdot g$ is group-like  in $\hat
  U(L_\bullet)$ for $\ell \in \hat L_+$, $g\in L_0$.

Conversely, let $x$ be a group-like element of $\hat U(L_\bu)$.
We can write
\begin{gather}
  \label{e14}
  x = \sum_{g\in L_0} x_g \cdot g,
\end{gather}
where $x_g\in \hat{U}(L_+)$ are uniquely determined by $x$, and for each
$m\geq 0$ there are only finitely many $g\in L_0$ such that the
degree $m$ part of $x_g$ is non-zero.
We have
\begin{gather*}
  \Delta (x)
  =\sum_{g\in L_0}  \sum  \big({x_g'}    \cdot g\big) \otimes  \big({x_g''}    \cdot g\big),
\end{gather*}
where  $\Delta (x_g)=\sum x_g'\otimes x_g''$.
We also have
\begin{gather*}
  x\otimes x=\sum_{g,h\in L_0}  (x_g\cdot g) \otimes  (x_h\cdot h),
\end{gather*}
Since $\Delta (x)=x\otimes x$, it follows that
\begin{eqnarray*}
  \Delta (x_g)=x_g\otimes x_g &&  \text{for {all} $g\in L_0$},\\
  x_g\otimes x_h = 0&&  \text{for {all} $g,h\in L_0$, $g\neq h$}.
\end{eqnarray*}
Since $x\neq 0$, there is $g \in L_0$ such that $x=x_g\cdot g$ and $x_g$ is group-like.
Hence $\ell = \log( x_g )$ is {primitive in} $\hat U(L_+)$.
Since the primitive part of $ U(L_+)$ is $L_+$, {the element} $\ell$ belongs to the degree-completion of $L_+$.
\end{proof}

\subsection{Quillen's description of the associated graded of a group ring}\label{sec:quill-descr-assoc}

A~well-known result of Quillen describes the associated graded of a group ring
filtered by powers of the augmentation ideal \cite{Quillen}.
{This} result is generalized to the filtration of a group ring induced by
{any} extended N-series, as follows.

\begin{theorem}   \label{Quillen}
Let $K_*$ be an extended N-series. There is a (unique) ring homomorphism
\begin{equation}  \label{Theta}
\Upsilon:   U(\gr_\bu(K_*) )  \longrightarrow \gr_\bu( J_*(K_*))
\end{equation}
 defined by $\Upsilon(gK_1)= g+ J_1(K_*) $ for $g \in K_0$ and by $\Upsilon(xK_{i+1}) = (x-1) + J_{i+1}(K_*)$ for $x \in K_i$, $i\geq 1$.
Furthermore, the rational version of $\Upsilon$
$$
\Upsilon^\Q:   U(\gr_\bu^\Q(K_*) )  \longrightarrow \gr_\bu( J_*^\Q(K_*))
$$
 is a $\Q$-algebra isomorphism.
\end{theorem}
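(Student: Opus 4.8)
The plan is to treat the two assertions in turn: first that $\Upsilon$ is a well-defined ring homomorphism over $\Z$, and then that $\Upsilon^\Q$ is bijective. For the first, I would define $\Upsilon$ separately on the two Hopf subalgebras $\Z[\gr_0(K_*)]=\Z[K_0/K_1]$ and $U(\gr_+(K_*))$ of $U(\gr_\bu(K_*))$, by $gK_1\mapsto g+J_1(K_*)$ and $xK_{i+1}\mapsto (x-1)+J_{i+1}(K_*)$ respectively, and then check compatibility with the crossed-product relations \eqref{multipl}. The two points to verify are the Quillen-type computations that the Lie bracket of $\gr_+(K_*)$ is sent to the commutator in the graded ring $\gr_\bu(J_*(K_*))$ (because $(x-1)(y-1)-(y-1)(x-1)\equiv [x,y]-1$ modulo one higher filtration degree) and that the conjugation relation $g\cdot u\cdot g^{-1}={}^g u$ is respected (because $g(x-1)g^{-1}={}^gx-1$, matching the action \eqref{e96}). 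Since $J_1(K_*)=I_1(K_*)$, the degree-$0$ component of the target is exactly $\Z[K_0/K_1]$, so no collision occurs there, and uniqueness is clear since the listed elements generate $U(\gr_\bu(K_*))$ as a ring. Surjectivity of $\Upsilon$ (hence of $\Upsilon^\Q$) is immediate from the generating set of $J_m(K_*)$ recalled after \eqref{Delta}: each product $(x_1-1)\cdots(x_p-1)+J_{m+1}(K_*)$ with $\sum m_j=m$ is a product of the generators $\Upsilon(x_jK_{m_j+1})$.

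For the isomorphism over $\Q$ I would pass to Hopf algebras. First I would observe that $\Upsilon^\Q$ is a morphism of graded cocommutative Hopf $\Q$-algebras: group elements are group-like and, for $x\in K_i$ with $i\ge 1$, the class $(x-1)+J^\Q_{i+1}(K_*)$ is primitive, since the cross term in $\Delta(x-1)=(x-1)\otimes 1+1\otimes(x-1)+(x-1)\otimes(x-1)$ lies in bidegree $(i,i)$, hence in total degree $>i$. By the structure theorem for graded cocommutative Hopf $\Q$-algebras whose degree-$0$ part is a group algebra (Milnor--Moore together with Cartier--Kostant), both source and target are recovered as the crossed product of the enveloping algebra of their Lie algebra of primitives with the group algebra of their degree-$0$ group-likes; thus $\Upsilon^\Q$ is an isomorphism if and only if it induces isomorphisms on degree-$0$ group-likes and on primitives. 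On degree-$0$ group-likes it is the identity of $\Q[K_0/K_1]$, and Lemma \ref{glp} confirms that the primitives and group-likes of the source are exactly $\gr_+^\Q(K_*)$ and $K_0/K_1$. Since $\Upsilon^\Q$ is already surjective, it is automatically surjective on primitives, so the whole problem reduces to showing that
\begin{gather*}
\phi_m\colon (K_m/K_{m+1})\otimes\Q\longrightarrow J^\Q_m(K_*)/J^\Q_{m+1}(K_*),\qquad xK_{m+1}\longmapsto (x-1)+J^\Q_{m+1}(K_*),
\end{gather*}
is injective for every $m\ge 1$.

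The injectivity of $\phi_m$ is the heart of the matter, and I expect it to be the main obstacle: it is precisely a rational dimension-subgroup statement, namely that $K_m\cap\big(1+J^\Q_{m+1}(K_*)\big)$ is contained in the root set $\sqrt{K_{m+1}}$. I would establish it by the classical Malcev--Jennings technique adapted to $J^\Q_*(K_*)$: choose for each $m\ge1$ elements of $K_m$ whose classes form a $\Q$-basis of $(K_m/K_{m+1})\otimes\Q$, lift a Poincaré--Birkhoff--Witt basis of $U(\gr_+^\Q(K_*))$ to the corresponding ordered monomials in the $(x-1)$, and show these remain $\Q$-linearly independent in $\gr_\bu(J^\Q_*(K_*))$; equivalently, one works in the $J^\Q_*$-adic completion $\widehat{\Q[K_0]}$, where every $g\in K_m$ has $\log g$ in the $m$th filtration step with leading term $\phi_m(gK_{m+1})$, and one must rule out that a non-torsion class in $K_m/K_{m+1}$ has $\log g$ lying one step deeper. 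For the lower central series this is exactly the Malcev--Jennings--Hall theorem quoted in Corollary \ref{r091}; for a general extended N-series I would either reproduce Jennings' inductive elimination argument, or reduce to the lower-central-series case by comparing the two filtrations. Once $\phi_m$ is injective for all $m$, the structure theorem yields that $\Upsilon^\Q$ is an isomorphism of graded Hopf $\Q$-algebras, completing the proof.
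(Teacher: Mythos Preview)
Your construction of $\Upsilon$ and the surjectivity argument match the paper's. The difference is entirely in the proof that $\Upsilon^\Q$ is injective.

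The paper does \emph{not} invoke Milnor--Moore or Cartier--Kostant. Instead it introduces an auxiliary filtration $J'_+(K_+)$ on $\Q[K_1]$ (defined just like $J_*(K_*)$ but inside the subgroup ring), for which the Quillen-type isomorphism $\Upsilon'^\Q\colon U(\gr_+^\Q(K_+))\to\gr_+(J'^\Q_+(K_+))$ is already known from the N-series case (cited as \cite[Corollary~5.4]{Massuyeau_DSP}). The remaining work is to prove, for each $m\ge1$, a tensor decomposition
\[
\gr_m(J_*(K_*))\;\simeq\;\gr_m(J'_+(K_+))\otimes\Z[K_0/K_1],
\]
which is done by an explicit retraction: pick a set-theoretic section $s\colon K_0/K_1\to K_0$ and send $g\in K_0$ to $(g\,s(\pi(g))^{-1})\otimes\pi(g)$. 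This is completely elementary and works already over $\Z$; the rational hypothesis enters only through the cited N-series result.

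Your route is more structural: you reduce to injectivity of $\phi_m$ on primitives via the Cartier--Kostant structure theorem. This is sound in principle (the target is pointed since its coradical sits in degree~$0$, which is a group algebra), but it shifts all the difficulty into the dimension-subgroup statement $K_m\cap(1+J^\Q_{m+1}(K_*))\subset\sqrt{K_{m+1}}$, which you leave as a sketch. Note that this statement concerns the filtration on $\Q[K_0]$, where the $K_0/K_1$-action is entangled with the N-series data; the paper's section trick is precisely what disentangles them and reduces to the known N-series statement on $\Q[K_1]$. So your proposed ``reduce to the lower-central-series case by comparing the two filtrations'' would, if carried out, likely rediscover the paper's argument. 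The Hopf-algebraic framing buys conceptual clarity about why $\Upsilon^\Q$ should be an isomorphism, but the paper's approach is shorter and avoids both the structure theorem and a from-scratch Jennings argument.
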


\begin{proof}
{The N-series $K_+ = (K_m)_{m\geq 1}$ defined by $K_*$} induces a filtration
\begin{equation} \label{J'}
J'_+(K_+) = (J'_m(K_+))_{m\geq 1},
\end{equation}
where $J'_m(K_+)$ is the subgroup of $\Z[K_1]$ spanned by the elements  $(x_1-1)\cdots (x_p-1)$
for all  $x_1 \in K_{m_1}, \dots, x_p \in K_{m_p}$, $m_1+\dots +m_p \geq  m$, $m_1,\ldots ,m_p\ge 1,\ p\ge 1$.
({It} is an ideal of $\Z[K_1]$ contained in $J_m(K_*)$.) Let
$$
\gr_+ (J'_+(K_+))  = \bigoplus_{m\geq 1} \frac{J'_m(K_+)}{J'_{m+1}(K_+)}
$$
be the {associated} graded ring, and let
$$
\gr_+(K_+) =  \bigoplus_{m\geq 1} \frac{K_m}{K_{m+1}}
$$
be the graded Lie algebra associated to the  $N$-series $K_+$.
It is easily checked that the graded abelian group homomorphism
$$
\gr_+(K_+) \longrightarrow \gr_+( J'_+(K_+)), \ (xK_{m+1}) \longmapsto (x-1)+J'_{m+1}(K_+)
$$
preserves the Lie bracket {and hence}
induces a  ring homomorphism
$$
\Upsilon':   U(\gr_+(K_+) )  \longrightarrow \gr_+( J'_+(K_+)).
$$
By composing it with the canonical map  $\gr_+( J'_+(K_+)) \to  \gr_+( J_*(K_*))$,
we obtain a ring homomorphism
\begin{equation} \label{Theta1}
\Upsilon:  U(\gr_+ (K_*) ) = U(\gr_+(K_+) )  \longrightarrow  \gr_\bu( J_*(K_*)).
\end{equation}
Besides, the inverse of the canonical isomorphism $\Z[K_0]/J_1(K_*) \to \Z[\bar K_0]$, where $\bar K_0  =  K_0/K_1$, defines a ring homomorphism
\begin{equation} \label{Theta2}
\Upsilon:\Z[\bar K_0] \longrightarrow \gr_\bu( J_*(K_*)).
\end{equation}
A straightforward computation shows that \eqref{Theta1} and
\eqref{Theta2} define together a  ring homo\-morphism~\eqref{Theta} on
{$U(\gr_\bu(K_*))=U(\gr_+ (K_*) )\,\sharp\,\Z[\bar K_0]$}.

As a generalization of {Quillen's result mentioned above,}
it is known that the rational version $\Upsilon'^{\Q}$ of $\Upsilon'$ is an isomorphism \cite[Corollary 5.4]{Massuyeau_DSP}.
Thus, to conclude that $\Upsilon^\Q$ is  an isomorphism,
it {suffices} to prove that  $\gr_+ ( J_*(K_*))$ is isomorphic to $\gr_+( J'_+(K_+))
\otimes \Z[\bar K_0]$. Specifically, we need to prove that the group homomorphism
$$
r: \frac{J'_m(K_+)}{ J'_{m+1}(K_+)} \otimes  \Z[\bar K_0] \longrightarrow \frac{J_m(K_*)}{ J_{m+1}(K_*)}
$$
defined by $r((u + J'_{m+1}(K_+)) \otimes (gK_1) )= (ug + J_{m+1}(K_*))$
is an isomorphism {for each $m\geq 1$}. Clearly, $r$ is surjective. To construct a left inverse to $r$,
{let $\pi: K_0 \to \bar K_0$ denote} the canonical projection,
and let $s:\bar K_0 \to K_0$ be a set-theoretic section of~$\pi$. Then there is a unique group  homomorphism
$$
q: \Z[K_0] \longrightarrow \Z[K_1] \otimes \Z[\bar K_0]
$$
defined by $q(g) = (g\, (s\pi(g))^{-1}) \otimes \pi(g)$ for $g\in K_0$. For any
$x_1 \in K_{m_1}, \dots, x_p \in K_{m_p}$ with $m_1+\dots +m_p \geq  m$, $m_1,\ldots ,m_p\ge 1,\ p\ge 1$, and for any $y\in K_0$, we have
  $$
  q\Big( (x_1-1)\cdots (x_p-1) y \Big) = (x_1-1)\cdots (x_p-1) \big( y (s\pi(y))^{-1}\big) \otimes \pi(y),
  $$
which shows that $q(J_m(K_*)) \subset J'_m(K_+) \otimes \Z[\bar K_0]$. Therefore, $q$ induces a  group homomorphism
$$
q: \frac{J_m(K_*)}{ J_{m+1}(K_*)} \longrightarrow \frac{J'_m(K_+) \otimes \Z[\bar K_0]}{ J'_{m+1}(K_+) \otimes \Z[\bar K_0]}
\simeq  \frac{J'_m(K_+) }{ J'_{m+1}(K_+) } \otimes \Z[\bar K_0],
$$
which satisfies $qr=\id$.
\end{proof}

\begin{remark}
{It is easily verified} that $\Upsilon$ preserves the {graded} Hopf algebra structures.
Hence $\Upsilon_\Q$ is a {graded} Hopf $\Q$-algebra isomorphism.
\end{remark}

\section{Formality of extended N-series}\label{sec:formality-extended-n}

Assuming that an extended N-series $K_*$ is ``formal'' in some sense,
we {here} show  that an action of an extended N-series $G_*$ on $K_*$ has an
``infinitesimal'' counterpart {containing}
all the Johnson homomorphisms. In this section, we {work over $\Q$.}

\subsection{Formality and expansions}\label{sec:formality-expansions}

Let $K_*$ be an extended N-series and consider the completion
$$
\wh{\Q [K_*]} = \plim_{k}\  \Q[K_0]\, / J_k^\Q(K_*)
$$
of the group $\Q$-algebra $\Q [K_0]$ with respect to the rational version $J_*^\Q(K_*)$ of the filtration~\eqref{Delta}.
The filtered Hopf $\Q$-algebra structure of $\Q [K_0]$  extends to a complete Hopf algebra structure on $\wh{\Q [K_*]}$,
whose filtration is denoted by $\hat{J}_*^\Q(K_*)$.

An extended N-series $K_*$ is said to be \emph{formal} if the complete Hopf algebra
$\wh{\Q [K_*]}$ is isomorphic to the degree-completion  of its associated graded, namely
$$
\widehat{\gr}_\bu( J_*^\Q(K_*)) = \prod_{k\geq 0} \frac{J_k^\Q(K_*)}{J_{k+1}^\Q(K_*)},
$$
through an isomorphism {whose associated graded is the identity}.

Recall that $\hat U(  \gr_\bu^\Q(K_*) )$ denotes the degree-completion of the universal enveloping algebra
 of the eg-Lie $\Q$-algebra $\gr_\bu^\Q(K_*)$ associated to the extended N-series~$K_*$.
An \emph{expansion} of an extended N-series $K_*$ is a {homomorphism}
$$
\theta: K_0 \longrightarrow \hat U(  \gr_\bu^\Q(K_*) )
$$
which maps any $x\in K_i$, $i \geq 0$  to a group-like element of the form
\begin{equation} \label{theta}
\theta(x) = \left\{\begin{array}{ll}
1+ (xK_{i+1}) + (\deg >i) & \hbox{if } i>0,\\
(xK_1) + (\deg>0) & \hbox{if } i=0.
\end{array}\right.
\end{equation}

\begin{example} \label{expansion_free}
Assume that $K_*$ is associated with the lower central series of a free group $ K_0=K_1$.
Let $\Lie(H^\Q)$ denote the free Lie $\Q$-algebra generated by $H^\Q = (K_1/K_2)\otimes \Q$ in degree $1$.
Then the identity of $H^\Q$ extends uniquely to an isomorphism $\Lie(H^\Q) \simeq \gr_+^\Q(K_*)$ of graded Lie $\Q$-algebras,
so that we have a canonical isomorphism of graded Hopf $\Q$-algebras
$$
U(  \gr_\bu^\Q(K_*)) =   U(\gr_+^\Q(K_*)) \simeq U( \Lie(H^\Q)) = T(H^\Q),
$$
where $T(H^\Q)$ is the tensor algebra generated by $H^\Q$ in degree $1$.
Hence, in this case, an expansion of $K_*$ is a {homomorphism} $\theta: K_0 \to \hat T(H^\Q)$ such that
\begin{equation}  \label{theta(x)}
\theta(x) = \exp\Big( [x] + (\hbox{series of Lie elements of degree $>1$)}\Big)
\end{equation}
for all $x\in K_0$, where $[x]  =  (x K_2) \otimes 1 \in H^\Q$.
For instance,  for each basis $b=(b_i)_{i}$ of $K_0$,
there is a unique expansion $\theta_b$ of $K_*$
{such that}  $\theta_b(b_i) = \exp([b_i])$.
\end{example}

The following establishes the relationship between formality and expansions.

\begin{proposition} \label{formal_expansion}
An extended N-series $K_*$ is formal if and only if it has an expansion.
\end{proposition}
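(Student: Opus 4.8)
The plan is to prove the two implications separately, and the core of the matter is an iterative construction relating group-like elements of the two complete Hopf algebras. First I would fix notation: write $\widehat{U} = \hat U(\gr_\bu^\Q(K_*))$ for the target of an expansion and recall from Lemma~\ref{glp} that the group-like part of $\widehat{U}$ is exactly $\{\exp(\ell)\cdot g \mid \ell \in \hat L_+,\, g\in L_0\}$, where here $L_\bu = \gr_\bu^\Q(K_*)$. The filtration on $\widehat{U}$ is the degree filtration, and Theorem~\ref{Quillen} gives us the graded Hopf $\Q$-algebra isomorphism $\Upsilon^\Q: U(\gr_\bu^\Q(K_*)) \xrightarrow{\ \sim\ } \gr_\bu(J_*^\Q(K_*))$ relating the associated graded of the group-ring filtration to this universal enveloping algebra. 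The whole argument will be a translation between the filtered object $\wh{\Q[K_*]}$ and the graded object $\widehat{U}$ through $\Upsilon^\Q$.

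For the direction ``formal $\Rightarrow$ expansion'': suppose we have a complete Hopf algebra isomorphism $\Phi: \wh{\Q[K_*]} \to \widehat{\gr}_\bu(J_*^\Q(K_*))$ inducing the identity on the associated graded. Composing the inclusion $K_0 \hookrightarrow \wh{\Q[K_*]}$ (which lands in the group-like part, since each $x\in K_0$ is group-like in $\Q[K_0]$) with $\Phi$ and with the inverse of the completion of $\Upsilon^\Q$, I would obtain a multiplicative map $\theta: K_0 \to \widehat{U}$ sending each $x$ to a group-like element. It remains to check the leading-term condition~\eqref{theta}; this follows because $\Phi$ is the identity on the associated graded and $\Upsilon^\Q$ sends $xK_{i+1}$ to $(x-1)+J_{i+1}^\Q(K_*)$, so the degree-$i$ part of $\theta(x)$ is precisely $xK_{i+1}$ (and the degree-$0$ part is $xK_1$ when $i=0$), which is exactly the normalization required of an expansion.

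For the converse ``expansion $\Rightarrow$ formal'': given an expansion $\theta: K_0 \to \widehat{U}$, I would extend it $\Q$-linearly and continuously to an algebra homomorphism $\wh{\Q[K_*]} \to \widehat{U}$, the key point being that $\theta$ is \emph{filtered}, i.e.\ $\theta(J_m^\Q(K_*)) \subset \widehat{U}_{\geq m}$. This is verified by noting that $J_m^\Q(K_*)$ is generated by products $(x_1-1)\cdots(x_p-1)$ with $\sum m_j \geq m$ and $x_j\in K_{m_j}$, and that \eqref{theta} forces $\theta(x_j)-1$ to have leading degree $m_j$, so the product lies in degree $\geq m$. I would then pass to associated graded maps and argue, using Theorem~\ref{Quillen}, that $\gr_\bu(\theta)$ is an isomorphism onto $\gr_\bu(J_*^\Q(K_*))$, identified via $\Upsilon^\Q$ with the generators. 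Since $\theta$ is a filtered map inducing an isomorphism on the associated graded of complete filtered objects, a standard completeness argument shows $\theta$ itself is an isomorphism $\wh{\Q[K_*]} \xrightarrow{\sim} \widehat{U} \cong \widehat{\gr}_\bu(J_*^\Q(K_*))$ inducing the identity on the associated graded; this is precisely formality.

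The main obstacle I anticipate is the verification that an expansion induces an isomorphism on \emph{associated graded}, rather than merely a filtered map, and in particular identifying $\gr_\bu(\theta)$ with the canonical generating map through $\Upsilon^\Q$. One must check that the leading terms prescribed by~\eqref{theta} generate the target graded algebra and match $\Upsilon^\Q$ degree by degree; since $\Upsilon^\Q$ is already known to be an isomorphism by Theorem~\ref{Quillen}, the surjectivity of $\gr_\bu(\theta)$ reduces to the statement that the classes $xK_{i+1}$ generate $\gr_\bu^\Q(K_*)$, while injectivity then follows by comparing Hilbert series or by the completeness argument. The homomorphism property of $\theta$ and the group-like condition are used precisely to guarantee that $\gr_\bu(\theta)$ respects the Hopf-algebra (in particular the multiplicative) structure, so that the identification with $\Upsilon^\Q$ is one of graded algebra maps.
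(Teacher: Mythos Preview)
Your proposal is correct and follows essentially the same route as the paper: in both directions one passes through the square linking $K_0$, $\wh{\Q[K_*]}$, $\widehat{\gr}_\bu(J_*^\Q(K_*))$ and $\hat U(\gr_\bu^\Q(K_*))$ via $\iota$ and $\hat\Upsilon^\Q$, and for the converse the key point is that condition~\eqref{theta} forces $\gr(\hat\theta)=(\Upsilon^\Q)^{-1}$, whence completeness gives the isomorphism. One small correction to your final paragraph: the group-like hypothesis is not what makes $\gr_\bu(\theta)$ multiplicative (that is automatic once $\theta$ is a group homomorphism, hence extends to an algebra map); rather, it is needed so that $\hat\theta$ preserves the \emph{comultiplication} (since $\iota(K_0)$ generates $\wh{\Q[K_*]}$ topologically and is sent to group-like elements), which is required because formality asks for a complete \emph{Hopf} algebra isomorphism, not merely an algebra isomorphism---and Lemma~\ref{glp} plays no role in this proof.
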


\begin{proof}
{Consider} the diagram
\begin{gather}
  \label{diag1}
  \xymatrix{
    K_0
    \ar@{.>}[r]^{\theta\quad }
    \ar[d]_{\iota}
    &
    \hat U(  \gr_\bu^\Q(K_*) )
    \ar[d]^{\hat \Upsilon^\Q}_{\simeq}
    \\
    \wh{\Q [K_*]}
    \ar@{.>}[r]^{f\quad }_{\simeq\quad}
    \ar@{.>}[ru]^{\hat\theta}_{\simeq}
    &
    \widehat{\gr}_\bu( J_*^\Q(K_*)),
    }
\end{gather}
where $\iota$ is the canonical map and
${\hat \Upsilon^\Q}$ is the isomorphism {in} Theorem \ref{Quillen}.

Assume that $K_*$ is formal. Then there is a complete Hopf algebra
isomorphism $f$ in \eqref{diag1} inducing the identity on the
associated graded.  The complete Hopf algebra isomorphism
$\hat\theta:=({\hat \Upsilon^\Q})^{-1}f$ satisfies
 \begin{equation*}
 \hat\theta(y) = (\Upsilon^\Q)^{-1}\Big(y+ J_{m+1}^\Q(K_*)\Big) + (\deg >m)  \quad \hbox{for $y\in J_m^\Q(K_*)$, $m\geq 0$},
 \end{equation*}
 which implies \eqref{theta} for the {homomorphism} $\theta:=\hat\theta\iota$.
 Since $\iota(K_0)$ is contained in the group-like part of $\wh{\Q [K_*]}$ and $\hat \theta$ preserves the {comultiplication},
 $\theta(K_0)$ is contained in the group-like part of $\hat U(\gr_\bu^\Q (K_*))$.

 Conversely, assume that $K_*$ has an expansion, i.e., a
 {homomorphism} $\theta$ in \eqref{diag1}.
Extend $\theta$ by linearity to an algebra homomorphism $\theta: \Q[K_0] \to \hat U(  \gr_\bu^\Q(K_*) ) $,
which is filtration-preserving by \eqref{theta}.  Hence it
induces a complete algebra homomorphism
$\hat\theta$ in \eqref{diag1}.
Since $\iota(K_0)$ generates $\wh{\Q [K_*]}$ as a topological vector space
and since  $\hat \theta$ maps $\iota(K_0)$ into the group-like part of $\hat U(  \gr_\bu^\Q(K_*) )$,
it follows that $\hat \theta$ preserves the {comultiplication}: therefore, $\hat \theta$ is a complete Hopf algebra  homomorphism.
By \eqref{theta}, $\hat \theta$ induces the isomorphism
 $(\Upsilon^\Q)^{-1}$ {on the associated graded}:
 hence $\hat \theta$ is an isomorphism. Thus, $f:=\Upsilon^\Q \hat
 \theta$ tells us that $K_*$ is formal.
\end{proof}

\begin{remark} \label{smaller_theta}
Let $\theta$ be an expansion of an extended N-series $K_*$.
{The arguments in the proof of Proposition \ref{formal_expansion} shows}
that $\theta$ induces a complete Hopf algebra isomorphism
$$
\hat\theta: \wh{\Q [K_+]} \longrightarrow \hat U(  \gr_+^\Q(K_*) ),
$$
where $ \wh{\Q [K_+]}$ denotes the completion of $\Q[K_1]$
with respect to the rational version of the filtration $J'_+(K_+)$ defined at \eqref{J'}.
\end{remark}

\begin{remark}
Assume that $K_*$ is the extended N-series defined by the lower central series of a group.
Then an expansion of $K_*$ in our sense is called a ``Taylor expansion'' in \cite{BN}
and a ``group-like expansion'' in \cite{Massuyeau_IMH} (in the case of a free group).
Note that $K_*$ is formal in our sense if and only if it is ``filtered-formal'' (over $\Q$) in the sense of \cite{SW}.
Proposition \ref{formal_expansion} is a generalization of
\cite[Proposition 2.10]{Massuyeau_IMH} and \cite[Theorem 8.5]{SW}.
\end{remark}

\subsection{Actions of extended N-series in the formal case}\label{sec:actions-extended-n}

Let a group $G$ act on an extended N-series $K_*$. This action induces a homomorphism
$$
\rho: G \longrightarrow \Aut( \wh{\Q [K_*]} )
$$
with values in the automorphism group of the complete Hopf algebra  $\wh{\Q [K_*]}$. Here,
$\rho$ maps {each} $g\in G$ to  the unique automorphism  $\rho(g)$
extending the automorphism of $K_0$ defined by $x\mapsto {}^gx$.

Now, assume that $K_*$ is formal, and fix an expansion
$\theta$ of $K_*$. According to the proof of Proposition
\ref{formal_expansion},
$\theta$ extends uniquely to a complete Hopf algebra isomorphism
$$
\hat \theta:  \wh{\Q [K_*]} \longrightarrow  \hat U(  \bar K_\bu^\Q ),
$$
where $U(  \bar K_\bu^\Q )$ is the universal enveloping algebra
of the eg-Lie $\Q$-algebra $\bar K_\bu^\Q:=\gr_\bu^\Q(K_*)$ associated to the extended N-series $K_*$.
Thus $\theta$ induces a  homomorphism
$$
\rho^\theta : G \longrightarrow \Aut(  \hat U(  \bar K_\bu^\Q ) )
$$
defined by $\rho^\theta(g) = \hat \theta \rho(g) \hat\theta^{-1}$ for $g \in G$.

{Furthermore, we assume that $G$ is equipped with an N-series $G_+=(G_m)_{m\geq 1}$
and that (the extended N-series corresponding to) $G_+$ acts on $K_*$.}
Recall that $\Der_+(\bar K_\bu^\Q)$ denotes {the derivation graded Lie algebra of}
the eg-Lie $\Q$-algebra $\bar K_\bu^\Q$, and let $\widehat{ \Der}_+(\bar K_\bu^\Q)$ denote its degree-completion.
Here is the main construction of this section:

\begin{lemma} \label{der_+}
Let {an N-series $G_+$ of a group $G$} act on a formal extended N-series~$K_*$, and let $\theta$ be an expansion of $K_*$.
Then, for any $g\in G_m, m\geq 1$, the series
$$
\log (\rho^\theta(g))  =  \sum_{k\geq 1} \frac{(-1)^{k+1}}{k}(\rho^\theta(g)-\id)^{k} \in \End_\Q (\hat U( \bar K_\bu^\Q )  )
$$
converges and its restriction  to $\bar K^\Q_0=K_0/K_1$ and $\bar K_+^\Q = \bar K_+ \otimes \Q$
defines an element {$\varrho^\theta(g)$} of the degree $\geq m$ part of $\widehat{\Der}_+(\bar K_\bu^\Q)$.
\end{lemma}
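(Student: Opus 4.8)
The plan is to show that the automorphism $\rho^\theta(g)=\hat\theta\rho(g)\hat\theta^{-1}$ of the complete Hopf algebra $\hat U(\bar K_\bu^\Q)$ is \emph{unipotent}, in the sense that $\rho^\theta(g)-\id$ strictly raises the degree by at least $m$; once this is known, $D:=\log(\rho^\theta(g))$ converges, is a biderivation, and its restrictions to $\bar K_0^\Q$ and $\bar K_+^\Q$ assemble into the sought derivation. First I would establish the degree-raising property, which is the only place the hypothesis $g\in G_m$ is used. Since $G_+$ acts on $K_*$ and $g\in G_m$, we have $[g,K_n]\subset K_{m+n}$ for all $n\ge 0$. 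Working in $\wh{\Q[K_*]}$, for $x\in K_n$ one computes $\rho(g)(x)-x=([g,x]-1)\,x$ with $[g,x]\in K_{m+n}$, hence $[g,x]-1\in \hat J_{m+n}^\Q(K_*)$. Using that $J_n^\Q(K_*)$ is spanned by products $(x_1-1)\cdots(x_p-1)$ with $x_i\in K_{n_i}$, $\sum n_i\ge n$, and that $\rho(g)$ is an algebra homomorphism, expanding $\prod(\rho(g)(x_i)-1)-\prod(x_i-1)$ shows that $\rho(g)-\id$ maps $\hat J_n^\Q(K_*)$ into $\hat J_{n+m}^\Q(K_*)$.

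Because $\hat\theta$ is a complete Hopf algebra isomorphism carrying the filtration $\hat J_*^\Q(K_*)$ onto the degree filtration of $\hat U(\bar K_\bu^\Q)$ and inducing the identity on the associated graded (by the proof of Proposition \ref{formal_expansion} together with Theorem \ref{Quillen}), conjugation transports the above to $\rho^\theta(g)-\id$ raising degree by at least $m$. Consequently $(\rho^\theta(g)-\id)^k$ raises degree by at least $km$, the series $\log(\rho^\theta(g))=\sum_{k\ge 1}\frac{(-1)^{k+1}}{k}(\rho^\theta(g)-\id)^k$ converges in $\End_\Q(\hat U(\bar K_\bu^\Q))$, and $D$ raises degree by at least $m$. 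Next I would record that $D$ is simultaneously a derivation and a coderivation of $\hat U(\bar K_\bu^\Q)$. This is the standard fact that the logarithm of a unipotent filtered Hopf algebra automorphism is a biderivation: $\rho^\theta(g)$ is a complete Hopf automorphism (being conjugate by $\hat\theta$ of the Hopf automorphism $\rho(g)$), and writing $\rho^\theta(g)^s=\exp(sD)$ one notes that, modulo each degree, $s\mapsto \rho^\theta(g)^s$ is polynomial and agrees at all integers $s=n$ with the algebra and coalgebra automorphisms $\rho^\theta(g)^n$; differentiating at $s=0$ yields $D(xy)=D(x)y+xD(y)$ and $\Delta D=(D\otimes\id+\id\otimes D)\Delta$. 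Since $D(1)=0$, the coderivation property sends primitives to primitives, and because the primitive part of $\hat U(\bar K_\bu^\Q)$ is $\hat{\bar K}_+^\Q$ (which follows as in Lemma \ref{glp}), the family $d_+:=(D|_{\bar K_i^\Q})_{i\ge 1}$ is a derivation of the graded Lie algebra $\bar K_+^\Q$ raising degree by at least $m$.

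Finally I would extract the degree-$0$ data and verify the conditions of Definition \ref{r1}. For a group-like $a\in\bar K_0^\Q$ one has $\Delta(a)=a\otimes a$, and the coderivation identity forces $d_0(a):=D(a)\,a^{-1}$ to be primitive, hence an element of $\hat{\bar K}_+^\Q$ of degree $\ge m$. The algebra-derivation identity then gives the cocycle relation $d_0(ab)=d_0(a)+{}^a d_0(b)$, where ${}^a(\cdot)=a(\cdot)a^{-1}$ is the action of $\bar K_0^\Q$; and applying $D$ to ${}^a b=aba^{-1}$ for $b\in\bar K_i^\Q$, after using $d_0(a^{-1})=-{}^{a^{-1}}d_0(a)$ and hence $D(a^{-1})=-a^{-1}d_0(a)$, yields exactly $d_i({}^a b)=[d_0(a),{}^a b]+{}^a(d_i(b))$. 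Thus $(d_0,d_+)$ satisfies the three conditions of Definition \ref{r1}; decomposing $D$ into its homogeneous components of degree $k\ge m$ produces elements of $\Der_k(\bar K_\bu^\Q)$, so the resulting $\varrho^\theta(g)$ lies in the degree $\ge m$ part of $\widehat{\Der}_+(\bar K_\bu^\Q)$. I expect the main obstacle to be the bookkeeping in the first step, namely proving cleanly that $\rho^\theta(g)-\id$ raises degree by at least $m$ and transporting this faithfully across $\hat\theta$; the convergence, the biderivation property, and the verification of the derivation axioms are then formal.
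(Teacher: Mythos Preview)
Your proof is correct and follows essentially the same route as the paper: both establish that $\rho^\theta(g)-\id$ raises degree by at least $m$ via the same filtration estimate in $\wh{\Q[K_*]}$, deduce that $D=\log(\rho^\theta(g))$ is a well-defined biderivation, and then verify the axioms of Definition~\ref{r1} for $(d_0,d_+)$. Your argument is marginally more streamlined in that you pass directly from the coderivation identity to the primitivity of $D(a)a^{-1}$, whereas the paper first uses Lemma~\ref{glp} to show that $\log(r^\theta)$ preserves each subspace $\hat U(\bar K_+^\Q)\cdot x$ before proving primitivity.
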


\begin{proof}
Let $g\in G_m,m\geq 1$ and let $r = \rho(g)$.  Since
$$
r(x) =  x+(r(x)x^{-1} -1)x \ \in \ \big( x+ J_{m}^\Q(K_*) \big)
\quad  \text{for $x\in K_0$},
$$
we have
$(r-\id)(\wh{\Q [K_*]}) \subset \hat J_{m}^\Q(K_*)$; similarly, since
$$
r(x -1)= (x-1)+{(r(x)x^{-1}-1 )x} \ \in \  \big(  (x-1)+J_{i+m}^\Q(K_*) \big)
\quad \text{for  $x\in K_i$, $i\geq 1$,}
$$
we have $(r-\id)(\hat J_n^\Q(K_*))  \subset \hat J_{n+m}^\Q(K_*)$ for all $n \geq 1$.
Hence
\begin{equation} \label{increase}
(r-\id)^p(\hat J_n^\Q(K_*))  \subset \hat J_{n+pm}^\Q(K_*)
  \quad \hbox{for all $n\geq 0$, $p\geq 1$}.
\end{equation}
Taking $n=0$ in \eqref{increase}, we {see}
that
$$
\log(r) = \sum_{k\geq 1} \frac{(-1)^{k+1}}{k}(r-\id)^{k}
$$
is well defined as a linear endomorphism of $\wh{\Q [K_*]}$
and, taking $p=1$  in \eqref{increase}, we {see}
that $\log(r)$ increases the filtration step by $m$:
$$
\log(r)(\hat J_n^\Q(K_*)) \subset \hat J_{n+m}^\Q(K_*) \quad \hbox{for all }  n\geq 0.
$$
Furthermore, since $r$ is an algebra automorphism, $\log(r)$ is a
derivation of the algebra $\wh{\Q [K_*]}$.
(It is {well known}
that the logarithm of an algebra automorphism is a derivation whenever
it is defined; see e.g.\ \cite[Theorem 4]{Praagman}, whose combinatorial argument given for a commutative algebra works in general.)

Of course, the conclusions of the previous paragraph for $r$ apply to $r^\theta:=\rho^\theta(g)$ as well.
Thus we obtain
\begin{equation} \label{increase_bis}
(r^\theta-\id)^p( \hat U_{\geq n}(  \bar K_\bu^\Q )  )
\subset   \hat U_{\geq n+pm}(  \bar K_\bu^\Q )      \quad
\hbox{for all $n\geq 0$, $p\geq 1$,}
\end{equation}
and $\log(r^\theta)$ is a well-defined  derivation of the algebra  $\hat U( \bar K_\bu^\Q ) $
which increases the filtration step by $m$.

{Now we} prove that  $\log(r^\theta)$ maps  $\hat U(\bar K_+^\Q)\cdot x$ into itself for {each} ${x\in \bar K_0}$:
it suffices to prove the same property for $r^\theta$.
As a topological vector space,   $\hat U(\bar K_+^\Q)$ is spanned by its group-like elements:
for instance, this follows from Remark~\ref{smaller_theta} since $\widehat{\Q[K_+]}$ is spanned by the homomorphic image of $K_1$ as a topological vector space.
Therefore, it suffices to check  $r^\theta(u\cdot x) \in \hat U(\bar K_+^\Q)\cdot x$
for any group-like  $u \in  \hat U(\bar K_+^\Q)$. Since $u\cdot x$ is group-like, $r^\theta(u\cdot x)$ is group-like and, by Lemma \ref{glp},
we  have
$$
r^\theta(u\cdot x) = \exp(\ell)\cdot y = y + \ell\cdot  y + \frac{1}{2} \ell^2\cdot  y + \cdots
$$
for some $\ell$ in the degree-completion $\hat {\bar K}_+^{\Q}$ of $\bar K_+^{\Q}$ and $y\in \bar K_0$.
{Property \eqref{increase_bis}} with $p =1$ shows that $r^\theta$
 induces the identity {on the associated graded}.
Hence $r^\theta(u\cdot x)$  and $u\cdot x$ have the same degree $0$ part,  and we deduce that $y=x$.

Next, we show that  $\log(r^\theta)$ maps any $x\in \bar K_0$ {into}  $\hat {\bar K}_+^\Q \cdot x$.
By the previous paragraph, we have $\log(r^\theta) (x) = t x$ for some $t \in \hat U(\bar K_+^\Q)$.
Thus we need to show that $t$ is primitive.
Since $r^\theta$ is a coalgebra homomorphism, $\log(r^\theta)$ is a coderivation. It follows that
\begin{eqnarray*}
\Delta (t x) &=& \Big(\log(r^\theta) \otimes \id + \id \otimes \log(r^\theta)\Big) \Delta(x) \\
&=& \Big(\log(r^\theta) \otimes \id + \id \otimes \log(r^\theta)\Big) ( x \ho x )
\ = \   tx \ho x + x \ho tx
\end{eqnarray*}
and we deduce that $\Delta(t) = t \ho 1 +  1 \ho t$.
Similarly, we can show that $\log(r^\theta)$ maps any  $\ell \in {\bar K}_+^\Q$ to  $\hat {\bar K}_+^\Q$:
indeed, by the previous paragraph, we know that $\log(r^\theta)(\ell)$ belongs to $\hat U(\bar K_+^\Q)$
and, using that  $\log(r^\theta)$ is a coderivation, it is easily checked that $\log(r^\theta)(\ell)$ is primitive.

Thus, by the previous paragraph, we {can}
define a map $d_0: \bar K_0  \to \hat {\bar K}^\Q_+ $
and a group homomorphism $d_+: \bar K_+^\Q \to \hat {\bar K}_+^\Q$   by
$$
 \log(r^\theta)(x) = d_0(x) \cdot x  \quad \hbox{ and } \quad \log(r^\theta)(\ell) = d_+(\ell),
$$
respectively.
It remains to show that $(d_0,d_+)$ is an element of $\widehat{\Der}_+(\bar K_\bu^\Q)$,
i.e., $(d_0,d_+)$ is an infinite sum of derivations of the eg-Lie $\Q$-algebra $\bar K_\bu^\Q$.
(Those derivations will have degree $\geq m$ since  we have seen that $\log(r^\theta)$ increases the filtration step by $m$.)

First, $d_+$ consists of  derivations (in the usual sense) of the Lie $\Q$-algebra $\bar K_+^\Q$
since it is a restriction of the derivation  $\log(r^\theta)$ of the
algebra $\hat U(  \bar K_\bu^\Q )$.
Next, we check that $d_0$ is a $1$-cocycle. For any $x,y\in \bar K_0$, we have
\begin{eqnarray*}
\log(r^\theta)(xy) &=& x \cdot \log(r^\theta)(y) + \log(r^\theta)(x) \cdot y \\
&=&  x\cdot d_0(y) \cdot y + d_0(x) \cdot x \cdot y \ = \ \big({}^x d_0(y) + d_0(x) \big) \cdot xy,
\end{eqnarray*}
which shows that $d_0(xy) =  d_0(x)+ {}^x d_0(y)$. Finally,
for any $x\in \bar K_0$ and $\ell \in  {\bar K}_+^\Q$, we~have
\begin{eqnarray*}
\log(r^\theta)({}^x \ell) &=&\log(r^\theta)\big( x \cdot \ell \cdot x^{-1}\big) \\
&=& \log(r^\theta)(x) \cdot \ell \cdot x^{-1} + x \cdot \log(r^\theta)(\ell ) \cdot x^{-1} + x\cdot \ell \cdot \log(r^\theta)(x^{-1}) \\
&=& d_0(x) \cdot {}^x \ell +  {}^x d_+(\ell) - x \cdot \ell \cdot x^{-1} \cdot \log(r^\theta)(x) \cdot x^{-1}\\
&=& d_0(x) \cdot {}^x \ell +  {}^x d_+(\ell) - {}^x \ell \cdot d_0(x),
\end{eqnarray*}
which shows that $d_+({}^x \ell) = [d_0(x) , {}^x \ell ] +  {}^x d_+(\ell)$.
We conclude that $\varrho^\theta(g):=(d_0,d_+)$ belongs to $\widehat{\Der}_+(\bar K_\bu^\Q)$.
\end{proof}

We can now prove the main result of this section.

\begin{theorem} \label{infinitesimal_action}
Let an N-series $G_+$ of a group $G$ act
on a formal extended N-series~$K_*$ {with an expansion $\theta$.}
Then the filtration-preserving map
$$
\varrho^\theta: G \longrightarrow \widehat{\Der}_+(\bar K_\bu^\Q)
$$
{in} Lemma \ref{der_+} induces  the rational version of the Johnson morphism:
$$
\gr (\varrho^\theta) = \bar \tau_+^\Q : \bar G_+  \longrightarrow {\Der}_+(\bar K_\bu^\Q).
$$
\end{theorem}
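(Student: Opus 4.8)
The plan is to show that, for each $m\geq 1$ and each $g\in G_m$, the degree-$m$ homogeneous component of $\varrho^\theta(g)\in\widehat{\Der}_+(\bar K_\bu^\Q)$ coincides with $\bar\tau_m^\Q(gG_{m+1})$. By Lemma \ref{der_+} we have $\varrho^\theta(G_m)\subset\widehat{\Der}_{\geq m}$ and $\varrho^\theta(G_{m+1})\subset\widehat{\Der}_{\geq m+1}$, so $\varrho^\theta$ is filtration-preserving and $\gr_m(\varrho^\theta)\colon G_m/G_{m+1}\to\Der_m(\bar K_\bu^\Q)$ is the well-defined map sending $gG_{m+1}$ to that degree-$m$ component; thus the stated equality is exactly what we must verify. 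First I would pass from $\log(r^\theta)$ to $r^\theta-\id$, where $r^\theta:=\rho^\theta(g)$. Writing $\delta=r^\theta-\id$, Lemma \ref{der_+} shows $\delta$ raises the degree by at least $m$, so in $\log(r^\theta)=\sum_{k\geq1}\tfrac{(-1)^{k+1}}{k}\delta^{k}$ every term with $k\geq2$ raises the degree by at least $2m\geq m+1$. Hence the part of $\log(r^\theta)$ raising the degree by exactly $m$ equals that of $\delta$, and it suffices to compute the leading term of $\delta$ on $\bar K_0$ and on $\bar K_+^\Q$.

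The only input I would use is the intertwining relation $r^\theta\circ\theta=\theta\circ g$ on $K_0$, immediate from $r^\theta=\hat\theta\,\rho(g)\,\hat\theta^{-1}$, $\hat\theta\iota=\theta$ and $\rho(g)(x)={}^gx=g(x)$. Together with $g(x)=[g,x]\cdot x$ and the action hypothesis $[g,x]\in K_{m+i}$ for $x\in K_i$, it gives for every $x\in K_0$
\begin{equation*}
(r^\theta-\id)(\theta(x))=\theta(g(x))-\theta(x)=\bigl(\theta([g,x])-1\bigr)\,\theta(x).
\end{equation*}
For the positive part I take $i\geq1$ and $x\in K_i$, so that $\theta(x)=1+xK_{i+1}+(\deg>i)$ and $\theta([g,x])-1=[g,x]K_{m+i+1}+(\deg>m+i)$. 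Multiplying, the degree-$(m+i)$ part of $(r^\theta-\id)(\theta(x))$ is $[g,x]K_{m+i+1}$; moreover $\theta(x)-xK_{i+1}$ has only a degree-$0$ scalar (killed by $\delta$) and terms of degree $>i$, which $\delta$ sends to degree $>m+i$, so the degree-$(m+i)$ part of $\delta(\theta(x))$ equals that of $\delta(xK_{i+1})$. Since $d_+=\log(r^\theta)|_{\bar K_+^\Q}$ by Lemma \ref{der_+}, this yields $d_+(xK_{i+1})=[g,x]K_{m+i+1}+(\deg>m+i)$, whose degree-$m$ component agrees with $\bar\tau_m^\Q(gG_{m+1})_i(xK_{i+1})=[g,x]K_{m+i+1}$ by \eqref{e63}.

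The delicate case is the degree-$0$ cocycle $d_0$, since $\bar x:=xK_1\in\bar K_0$ is a degree-$0$ group-like element that differs from $\theta(x)$. Here I would invoke Lemma \ref{glp} to write $\theta(x)=\exp(\ell_x)\cdot\bar x$ with $\ell_x\in\hat{\bar K}_+^\Q$ of positive degree; because $[g,x]\in K_m\subset K_1$, the degree-$0$ part of $\theta(g(x))$ is again $\bar x$, so $\theta(g(x))=\exp(\ell_{g(x)})\cdot\bar x$ with $\exp(\ell_{g(x)})=\theta([g,x])\exp(\ell_x)$. Applying $r^\theta$ (an algebra automorphism, so $r^\theta\exp(\ell_x)=\exp(r^\theta\ell_x)$) to $\theta(x)=\exp(\ell_x)\bar x$ gives
\begin{equation*}
r^\theta(\bar x)=\exp(-r^\theta\ell_x)\,\theta([g,x])\,\exp(\ell_x)\,\bar x.
\end{equation*}
Since $r^\theta\ell_x-\ell_x$ has degree $>m$, modulo degree $>m$ I may replace $\exp(-r^\theta\ell_x)$ by $\exp(-\ell_x)$, and then the conjugation $\exp(-\ell_x)\theta([g,x])\exp(\ell_x)$ collapses to $1+[g,x]K_{m+1}$ because all brackets of $\ell_x$ with $\log\theta([g,x])$ have degree $>m$. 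Hence $(r^\theta-\id)(\bar x)\equiv [g,x]K_{m+1}\cdot\bar x\pmod{\deg>m}$, and as $d_0(\bar x)=\log(r^\theta)(\bar x)\cdot\bar x^{-1}$ by Lemma \ref{der_+}, its degree-$m$ component is $[g,x]K_{m+1}=\bar\tau_m^\Q(gG_{m+1})_0(xK_1)$. Matching both components for all $m\geq1$ gives $\gr(\varrho^\theta)=\bar\tau_+^\Q$. I expect the main obstacle to be precisely this last cancellation: verifying that replacing the expansion $\theta(x)$ by the bare group element $\bar x$, i.e.\ the presence of the correction $\ell_x$, does not disturb the leading term, which is what forces the careful modulo-degree bookkeeping above.
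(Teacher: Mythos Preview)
Your proof is correct and follows essentially the same strategy as the paper's: both reduce the leading term of $\log(r^\theta)$ to that of $r^\theta-\id$ and then identify the latter with the commutator formula defining $\tau_m$. The only real difference is the side of $\hat\theta$ on which the computation is carried out. The paper pulls everything back to $\wh{\Q[K_*]}$ via $\hat\theta^{-1}$, writing $\hat\theta^{-1}(\bar x)=\iota(y)z$ with $z\in 1+\hat J_1^\Q$ and using the filtration estimate $r(z)z^{-1}\equiv 1\pmod{\hat J_{m+1}^\Q}$ to kill the correction; you stay in $\hat U(\bar K_\bu^\Q)$, write $\theta(x)=\exp(\ell_x)\cdot\bar x$ via Lemma~\ref{glp}, and kill the same correction by the degree estimate $r^\theta\ell_x-\ell_x\in(\deg>m)$. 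These are the same cancellation seen through $\hat\theta$, so the two arguments are equivalent; your version has the minor advantage of making the ``correction term'' $\ell_x$ explicit as a Lie element, while the paper's version avoids invoking Lemma~\ref{glp}.
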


\begin{proof}
Let $g\in G_m$, $m\geq 1$. {Set} $r= \rho(g)$ and $r^\theta = \rho^\theta(g)$.
The leading term of $ \varrho^\theta(g)$ is a derivation of degree $m$ of the eg-Lie $\Q$-algebra $\bar K_\bu^\Q$,
which {is denoted} by $d=(d_i)_{i\geq 0}$.

We prove that $d_0: \bar K_0 \to \bar K_m\otimes \Q$ is the rationalization of $\tau_m(g)_0: \bar K_0 \to \bar K_m$.
Let $x\in  \bar K_0$.  By definition of $d_0$, we have
$$
 \log(r^\theta)(x) \cdot x^{-1}= d_0(x) + (\deg >m) \ \in \hat{\bar K}_+^\Q.
$$
Besides, it follows from \eqref{increase_bis} that
$$
 \log(r^\theta)(x) = (r^\theta(x) -x) + (\deg >m) \ \in \hat U\big(  \bar K_\bu^\Q \big);
$$
hence
\begin{eqnarray*}
d_0(x)  &=& \big(\, \hbox{degree $m$ part of } (r^\theta(x)\cdot x^{-1} -1)\big).
\end{eqnarray*}
Let $y\in K_0$ be a representative of $x$: since $\theta(y) = x + (\deg \geq 1)$ by \eqref{theta},
we have $\hat\theta^{-1}(x) = \iota(y) z $,
where {$z \in (1 + \hat J_1^\Q(K_*))$.}
Therefore,
\begin{eqnarray*}
\hat\theta^{-1}\big(r^\theta(x)\cdot x^{-1} -1\big) & = & r\big(\hat\theta^{-1}(x)\big) \, \big(\hat\theta^{-1}(x)\big)^{-1} -1 \\
&=& r(\iota(y)) r(z) z^{-1} \iota(y)^{-1} -1.
\end{eqnarray*}
However, \eqref{increase} shows that $r(z) - z \in \hat J_{m+1}^\Q(K_*)$,
which implies that $r(z) z^{-1}$ is congruent to $1$ modulo  $\hat J_{m+1}^\Q(K_*)$.
It follows that
\begin{eqnarray*}
\hat\theta^{-1}\big(r^\theta(x)\cdot x^{-1} -1\big)  &\equiv& r(\iota(y)) \iota(y)^{-1} -1  \pmod{\hat J_{m+1}^\Q(K_*)}.
\end{eqnarray*}
We deduce that
$$
d_0(x) =\big(\, \hbox{degree $m$ part of } (\theta([g,y])-1)\, \big)
 \stackrel{\eqref{theta}}{=}
([g,y]K_{m+1}) = \tau_m(g)_0(x).
$$

Let $i\geq 1$. Now we prove that $d_i: \bar K_i \otimes \Q \to \bar K_{i+m}\otimes \Q$ is the rationalization of $\tau_m(g)_i: \bar K_i \to \bar K_{i+m}$.
Let $\ell \in  \bar K_i$. By definition of $d_i$, we have
$$
\log(r^\theta)(\ell) = d_i (\ell) + (\deg>i+m)
$$
Besides, it follows from \eqref{increase_bis} that
$$
 \log(r^\theta)(\ell) = (r^\theta(\ell) -\ell) + (\deg >i+m) \ \in \hat U(\bar K_\bu^\Q);
$$
hence
\begin{eqnarray*}
d_i(\ell)  &=& \big(\, \hbox{degree $(i+m)$ part of } (r^\theta(\ell) -\ell)\big).
\end{eqnarray*}
Let $y\in K_i$ be a representative of $\ell$. Then we have $\theta(y) = 1+ \ell +(\deg >i)$ by \eqref{theta},
which implies that $\hat \theta^{-1}(\ell) \equiv  \big(\iota(y)-1\big)  \pmod{\hat J_{i+1}^\Q(K_*)}$.
Using \eqref{increase}, we deduce that
\begin{eqnarray*}
\hat\theta^{-1}\big(r^\theta(\ell) -\ell\big) \ = \ (r-\id) \big(\hat \theta^{-1}(\ell)\big) &\equiv &(r-\id) \big(\iota(y)-1\big) \pmod{\hat J_{m+ i+1}^\Q(K_*)} \\
&=& r(\iota(y)) -\iota(y) \\
&\equiv &   r(\iota(y))(\iota(y))^{-1} -1 \pmod{\hat J_{m+ i+1}^\Q(K_*)}.
\end{eqnarray*}
We conclude that
$$
d_i(\ell)  = \big(\, \hbox{degree $(i+m)$ part of } \theta([g,y]-1)\, \big)  \stackrel{\eqref{theta}}{=}
([g,y]K_{i+m+1}) = \tau_m(g)_i(\ell).
$$
\end{proof}

\begin{remark}\label{infinitesimalversion}
{We can regard the} map $\varrho^\theta: G \to \widehat{\Der}_+(\bar K_\bu^\Q)$ {in} Theorem \ref{infinitesimal_action}
as a ``linearization'' or an ``infinitesimal version'' of the extended N-series action of~$G_+$ on~$K_*$.
Let $\widehat{\Der}_+(\bar K_\bu^\Q)_{\operatorname{BCH}}$
denote  the  group whose underlying set is $ \widehat{\Der}_+(\bar K_\bu^\Q)$
and whose multiplication $\cdot$ is defined by the Baker--Campbell--Hausdorff series:
$$
d \cdot e:=d + e+  \frac{1}{2}[d,e] + \frac{1}{12} [d,[d,e]] + \frac{1}{12} [e,[e,d]] + \cdots
\quad \hbox{for $d,e \in \widehat{\Der}_+(\bar K_\bu^\Q)$}.
$$
{(Here $[\cdot,\cdot]$ denotes the degree-completion of the Lie bracket defined in {Theorem}~\ref{r2}.)}
{Then}
 $$
 \varrho^\theta: G  \longrightarrow  \widehat{\Der}_+(\bar K_\bu^\Q)_{\operatorname{BCH}}
 $$
is a group homomorphism,
which maps $G_+$ into the N-series of $\widehat{\Der}_+(\bar K_\bu^\Q)_{\operatorname{BCH}}$
whose {$m$th} term is $\widehat{\Der}_{\geq m}(\bar K_\bu^\Q)$ for every $m\geq 1$.
\end{remark}

\begin{remark}  \label{N_0_rem}
In Theorem \ref{infinitesimal_action}, {let} $K_+$ {be} an N$_0$-series of $K_1$
(see Section~\ref{sec:0-restricted}).
 Then the canonical map $\bar K_+ \to \bar K_+^\Q$ is
 injective.  Therefore, one {can trade}
the Johnson morphism $\bar \tau_\bu$ with its rational version $\bar \tau_\bu^\Q$ {without loss of information}.
It follows that the map $\varrho^\theta$ in Theorem \ref{infinitesimal_action} determines all the Johnson homomorphisms.
\end{remark}

\begin{example}\label{r232}
Assume as in Example \ref{Johnson} that $K_*$ is the extended N-series associated with the lower central series of $K_0=K_1:=\pi_1(\Sigma_{g,1},\star)$,
and let   $G_*$ denote the {``classical''} Johnson filtration of $G_0:=\operatorname{MCG}(\Sigma_{g,1},\partial \Sigma_{g,1})$.
Then, by Proposition~\ref{N_0}, $G_+$~is an N$_0$-series of $G:=G_1$, namely  the {Torelli group} of $\Sigma_{g,1}$.
Since $K_0$ is a free group, Example~\ref{expansion_free} applies: an
expansion of $K_*$ is a {homomorphism}
$$
\theta: K_0 \longrightarrow \hat T(H^\Q), \quad \hbox{where } H^\Q  =  H_1(\Sigma;\Q)
$$
satisfying \eqref{theta(x)}. According to Remark \ref{N_0_rem},
the map $\varrho^\theta$ {in} Theorem~\ref{infinitesimal_action} contains all the ``classical'' Johnson homomorphisms.
It is shown in \cite{Massuyeau_IMH} that, for an appropriate expansion $\theta$, the map $\varrho^\theta$ can be identified
with the ``tree reduction'' of the LMO functor introduced in \cite{CHM}.
\end{example}

\end{document}